\DeclarePairedDelimiter\floor{\lfloor}{\rfloor}
\theoremstyle{definition}
\newtheorem{theorem}{Theorem}[section]
\newtheorem{prop}[theorem]{Proposition}
\newtheorem{cor}[theorem]{Corollary}
\newtheorem{thm}[theorem]{Theorem}
\newtheorem{notation}[theorem]{Notation}
\newtheorem{subsec}[theorem]{}
\theoremstyle{plain}
\newtheorem*{thma}{Theorem A}
\newtheorem*{thmb}{Theorem B}
\theoremstyle{remark}
{\swapnumbers
   \newtheorem{ack}[theorem]{Acknowledgements} }
\newenvironment{myeq}[1][]
{\stepcounter{theorem}\begin{equation}\tag{\thetheorem}{#1}}
{\end{equation}}
\newenvironment{mysubsection}[2][]
{\begin{subsec}\begin{upshape}\begin{bfseries}{#2.}
			\end{bfseries}{#1}}
		{\end{upshape}\end{subsec}}
\newcommand{\C}{{\mathbb C}}
\newcommand{\Z}{{\mathbb{Z}}}
\newcommand{\R}{{\mathbb R}}
\newcommand\CC{{\mathcal C}}
\newcommand\FF{{\mathcal F}}
\newcommand\LL{{\mathcal L}}
\newcommand\MM{{\mathcal M}}
\newcommand\PP{{\mathcal P}}
\newcommand\PMF{{\PP\kern-2pt\MM\FF}}
\newcommand\PML{{\PP\kern-2pt\MM\LL}}
\newcommand{\fsubd}{\mathrel{{\scriptstyle\searrow}\kern-1ex^d\kern0.5ex}}
\newcommand{\bsubd}{\mathrel{{\scriptstyle\swarrow}\kern-1.6ex^d\kern0.8ex}}
\newcommand{\fsubeq}{\mathrel{\raise-.7ex\hbox{$\overset{\searrow}{=}$}}}
\newcommand{\bsubeq}{\mathrel{\raise-.7ex\hbox{$\overset{\swarrow}{=}$}}}
\newcommand{\tsh}[1]{\left\{\kern-.9ex\left\{#1\right\}\kern-.9ex\right\}}
\renewcommand{\Im}{\operatorname{Im}}
\newcommand{\Index}{\mbox{Index}}
\newcommand{\Ker}{\mbox{Ker}}
\newcommand{\di}{\mbox{dim}}
\newcommand{\con}{\mbox{Convex}}
\title{The index of equidimensional flag manifolds}
\author{Samik Basu}
\address{Stat-Math Unit, Indian Statistical Institute, Kolkata 700108, India.}
\email{samik.basu2@gmail.com, samikbasu@isical.ac.in}
\author{Bikramjit Kundu}
\address{Department of Mathematics, Ramakrishna Mission Vivekananda Educational and Research Institute, Belur Math, Howrah 711202}
\email{bikramju@gmail.com, bikramjit.kundu@rkmvu.ac.in}
\subjclass[2020]{Primary: 55M20, 55M35; Secondary: 52A35, 55N91.}
\keywords{Existence of equivariant maps, Flag manifolds, Fadell-Husseini index, equivariant cohomology.}
\begin{document}
\begin{abstract}
In this paper, we consider the flag manifold of $p$ orthogonal subspaces of equal dimension which carries an action of the cyclic group of order $p$. We provide a complete calculation of the associated Fadell-Husseini index. This may be thought of as an odd primary version of the computations of Barali\'c et al \cite{BBKV18} for the Grassmann manifold $G_n(\R^{2n})$. These results have geometric consequences  for $p$-fold orthogonal shadows of a convex body. 
\end{abstract}
\maketitle

\section{Introduction}
Many combinatorial problems that rely on topological methods for their solution involve the non-existence of equivariant maps between $G$-spaces \cite{Mat03} for a finite group $G$. The spaces that arise here typically have a free $G$-action, and one computes the ``index" of such spaces to answer questions about the existence of equivariant maps out of them. There are many variants of the ``index", which are invariants of $G$-spaces. The Fadell-Husseini index \cite{Fahu88} is one of the most widely used, defined for a $G$-space $X$ as the ideal of $H^\ast(BG)$ given by
\[ \Index_G(X) = \Ker(H^\ast(BG) \to H^\ast(X_{hG})),\]
where $BG$ stands for the classifying space, $EG\to BG$ the universal $G$-bundle, and $X_{hG} = EG\times_G X$ stands for the Borel construction. The use of the index to prove the non-existence of an equivariant map $X\to Y$ lies in the condition 
\[ \Index_G(Y) \subset \Index_G(X).\]
This has been very useful in the solution of the topological Tverberg problem for prime powers \cite{Oz87}, \cite{Vol00}. For certain Stiefel manifolds, index computations in the context of Kakutani's theorem are made in \cite{BaKu21}. This is usually computed using the spectral sequence associated to the fibration $X\to X_{hG} \to BG$, and may also be related to Bredon cohomology computations \cite{BG21}. In this paper, we carry out computations in the case of certain flag manifolds using an analogue of the novel technique of evaluating characteristic classes of wreath powers started in \cite{BBKV18}. 

Let $p$ be an odd prime. For the field $k=$ $\R$ or $\C$, consider the flag manifold 
$$F_n(k)= \{(V_1,\cdots, V_p)\mid V_i \subset k^{np},~ \dim(V_i)=n,~ V_i \perp V_j \mbox{ if } i \neq j \}.$$
Later we will denote $F_n(\C)$ by $F_n^U$, and $F_n(\R)$ by $F_n^{SO}$. The symmetric group $\Sigma_p$ acts on $F_n(k)$ by permuting the $V_i$, which is a free action. If we used $2$ instead of $p$, the flag manifold would be equivalent to the Grassmannian $G_n(k^{2n})$, and the $\Sigma_2$-action is the one which takes a subspace to its orthogonal complement. The index computations for this action are carried out in \cite{BBKV18}.  

We restrict the $\Sigma_p$ action on $F_n(k)$ to the cyclic group $C_p$, and we fix the coefficients for the cohomology as $\Z/p$. The flag manifold is the homogeneous space $F_n(\C)=U(pn)/U(n)^p$, and $F_n(\R) = O(pn)/O(n)^p$. As $p$ is odd, for a calculation involving cohomology with $\Z/p$ coefficients, we may work with $SO(n)$ instead of $O(n)$. Our first observation is that the cohomology of this flag manifold is concentrated in even degrees, and the spectral sequences associated to the fibrations
\[ F_n(\C)\to \Big(BU(n)\Big)^p \to BU(pn),~~F_n(\R)\to \Big(BSO(n)\Big)^p \to BSO(pn), \]
 lead us to a nice expression for it. Recall that the cohomology of $BC_p$ is given by 
\[H^\ast(BC_p;\Z/p) \cong \Z/p[u,v]/(u^2),~~|u|=1,~~ |v|=2. \]
In terms of this notation, we have the following result in the complex case. (see Theorems \ref{relprime}, \ref{Findex})
\begin{thma}
Let $n=p^aq$ with $p\nmid q$. Then, 
\[ \Index_{C_p}(F_n(\C))= (uv^{p^{a+1}-1}, v^{p^{a+1}}).\]
\end{thma}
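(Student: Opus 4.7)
I will reformulate the index as an ideal-membership problem in the cohomology of the wreath product classifying space, extending the technique of \cite{BBKV18} from $p=2$ to odd primes. Apply the Borel construction $EC_p\times_{C_p}(-)$ to the fibration $F_n(\C)=U(pn)/U(n)^p\to BU(n)^p\to BU(pn)$, with $C_p$ cyclically permuting the factors of $BU(n)^p$ and acting trivially on $BU(pn)$; this produces the fibration
\[(F_n(\C))_{hC_p}\longrightarrow B(U(n)\wr C_p)\xrightarrow{\phi} BU(pn).\]
The even-concentration of $H^\ast(F_n(\C);\F_p)$ together with a Leray--Hirsch argument (applied first to the non-equivariant fibration $F_n(\C)\to BU(n)^p\to BU(pn)$ and then extended equivariantly) yields
\[H^\ast((F_n(\C))_{hC_p};\F_p)\,\cong\,H^\ast(B(U(n)\wr C_p);\F_p)\big/(\phi^\ast c_1,\ldots,\phi^\ast c_{pn}).\]
With $\pi\colon B(U(n)\wr C_p)\to BC_p$ the Borel projection, the index becomes the set of $\alpha\in H^\ast(BC_p;\F_p)$ with $\pi^\ast\alpha\in(\phi^\ast c_1,\ldots,\phi^\ast c_{pn})$.

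To evaluate $\phi^\ast$ on Chern classes, I would restrict along the diagonal $\Delta\colon BU(n)\times BC_p\hookrightarrow B(U(n)\wr C_p)$. The composite $\phi\circ\Delta$ classifies the bundle $\gamma\boxtimes\rho$, where $\rho$ is the regular representation of $C_p$ with weights $0,v,2v,\ldots,(p-1)v$. Using the identity $\prod_{j=0}^{p-1}(X+jv)=X^p-Xv^{p-1}$ in $\F_p$ and letting $x_1,\ldots,x_n$ denote the Chern roots of $\gamma$, one obtains
\[(\phi\circ\Delta)^\ast(c_{\mathrm{tot}})=\prod_{i=1}^n\bigl((1+x_i)^p-(1+x_i)v^{p-1}\bigr),\]
whose specialisation at $x_i=0$ is $(1-v^{p-1})^n=(1-v^{p^a(p-1)})^q$ by the Frobenius identity in $\F_p$.

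For the relatively prime case $p\nmid n$ (so $a=0$; Theorem \ref{relprime}), the expansion $1-nv^{p-1}+\cdots$ has unit coefficient on $v^{p-1}$, placing $v^{p-1}$ up to scalar in the diagonal-restricted ideal. The next task is to lift this to the full wreath-product cohomology, which decomposes in terms of classes from $BU(n)$ along the diagonal, classes from $BC_p$, and transfer classes from $BU(n)^p$; carrying out this lift shows both that $v^p,\,uv^{p-1}\in\Index$ and that no generator of smaller degree occurs. For general $n=p^aq$ (Theorem \ref{Findex}), the Frobenius form $(1-v^{p^a(p-1)})^q$ shifts the relevant exponent from $p-1$ to $p^{a+1}-p^a$, and combining this with the Steenrod-algebra closure of $\Index$ (via $P^1(v)=v^p$) promotes the base-case generators $(uv^{p-1},v^p)$ to $(uv^{p^{a+1}-1},v^{p^{a+1}})$.

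\textbf{Main obstacle.} The diagonal restriction $\Delta^\ast$ is not injective, and consequently yields only necessary conditions for ideal membership. The decisive step---ruling out elements of $H^\ast(BC_p;\F_p)$ of degree strictly smaller than the claimed generators from belonging to the index---requires genuine computation in the full wreath-product cohomology $H^\ast(B(U(n)\wr C_p);\F_p)$, carefully tracking how the diagonal and transfer classes interact inside the ideal generated by $\phi^\ast c_i$.
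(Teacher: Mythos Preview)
Your framework is exactly the paper's: the reformulation via the fibration $(F_n^U)_{hC_p}\to BW_n^U\to BU(pn)$ and the identification of the index as $(\pi^\ast)^{-1}$ of the ideal generated by $i_B^\ast c_1,\ldots,i_B^\ast c_{pn}$ is precisely Proposition~\ref{indred}. The divergence, and the gap, is in how you propose to compute inside $H^\ast(BW_n^U)$.

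Your main tool is the diagonal restriction $\Delta\colon BU(n)\times BC_p\to BW_n^U$. You correctly flag that $\Delta^\ast$ is not injective, but the loss is more serious than you indicate. On the side of \emph{showing elements lie in the index} (the upper bound), $\Delta^\ast$-membership is only a necessary condition and proves nothing; you would still need to exhibit $\pi^\ast(v^{p^{a+1}})$ as an explicit combination of the $i_B^\ast c_k$ in the full wreath cohomology, and your proposal offers no mechanism for this. On the side of \emph{excluding smaller elements} (the lower bound), restriction to a point gives the ideal $(v^{p^{a+1}-p^a})$ in $H^\ast(BC_p)$, which only rules out $v^l$ for $l<p^{a+1}-p^a$; the range $p^{a+1}-p^a\le l<p^{a+1}$ is invisible to your method, and this is precisely where the difficulty sits.

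The paper's substitute for diagonal restriction is a closed formula for the \emph{full} total Chern class in $H^\ast(BW_n^U)$,
\[c(\gamma^n_U\wr C_p)=\sum_{r=0}^{n}P(c_r)(1+v^{p-1})^{n-r}+z(c),\]
proved by induction via the splitting principle (Theorem~\ref{TCh}); here $P(x)=x^{\otimes p}$ and $z(c)$ lies in the transfer image, so $u\cdot z=v\cdot z=0$. This formula retains exactly what your diagonal map discards: the $P(c_r)$ terms. The core of the argument (Proposition~\ref{5p}) is a degree-by-degree induction for $k\le p^{a+1}-1$ showing that, modulo $(c_1(\gamma\wr C_p),\ldots,c_{k-1}(\gamma\wr C_p))$, the relation $c_k=0$ takes one of four explicit forms depending on the $p$-adic shape of $k$. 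The crucial ones are
\[P(c_{p^a-p^m})+\alpha_m v^{p^{a+1}-p^{m+1}}+z_{\ast}(c)=0\quad(\alpha_m\in(\Z/p)^\times)\]
and $v^{p-1}P(c_{p^a-1})+z_{\ast}(c)=0$; combining the first at $m=0$ (multiplied by $v^{p-1}$) with the second yields $\alpha_0 v^{p^{a+1}-1}=z_{2(p^{a+1}-1)}(c)$, hence $uv^{p^{a+1}-1}=v^{p^{a+1}}=0$ in the quotient. The Lucas-theorem binomial bookkeeping that produces the jump from exponent $p^{a+1}-p^a$ to $p^{a+1}-1$ lives entirely in the $P(c_r)$-interactions that your restriction kills.

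Finally, the Steenrod-closure step cannot work as stated. Steenrod operations act within $H^\ast$ of a \emph{fixed} space; they may enlarge a known ideal $\Index_{C_p}(F_n^U)$ but cannot transport the generators $(uv^{p-1},v^p)$ of $\Index_{C_p}(F_q^U)$ (for $p\nmid q$) into information about $\Index_{C_p}(F_{p^aq}^U)$, which is a different space. The paper does use an equivariant comparison map, but in the opposite direction and only for the lower bound: $F_{p^a}^U\to F_{p^aq}^U$ gives $\Index_{C_p}(F_{p^aq}^U)\subset\Index_{C_p}(F_{p^a}^U)$, reducing non-vanishing of $v^{p^{a+1}-1}$ to the case $q=1$, where it is read off from the explicit transfer term $z_{2(p^{a+1}-1)}(c)=I(c_{p^a-1}\otimes c_{p^a}\otimes\cdots\otimes c_{p^a})$.
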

The expression in the real case is quite analogous to the complex case with a slight difference when $q=1$. We have (see Theorems \ref{relprimeso}, \ref{indexfso})
\begin{thmb}
Let $n=p^aq$ with $p\nmid q$. Then, 
\[ \Index_{C_p}(F_n(\R))= \begin{cases} 
(v^{p^{a+1}-1}) & \mbox{ if } q=1 \\ 
(uv^{p^{a+1}-1}, v^{p^{a+1}}) & \mbox{ if } q>1. \end{cases}\]
\end{thmb}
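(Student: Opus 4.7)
The plan is to adapt the strategy from the complex case (Theorem~A) to the real setting. Consider the $C_p$-equivariant fibration $F_n(\R) \to BSO(n)^p \to BSO(pn)$, in which $C_p$ permutes the $p$ factors in the middle, and apply the Borel construction to obtain
\[
F_n(\R)_{hC_p} \;\to\; B(SO(n)\wr C_p) \;\xrightarrow{\phi}\; BSO(pn).
\]
Writing $\pi\colon F_n(\R)_{hC_p} \to BC_p$ for the structure map, the Fadell--Husseini index is $\Ker \pi^*$, which I propose to compute by combining the Serre spectral sequence of $\pi$ with pullbacks of characteristic classes through $\phi$. Since $H^*(F_n(\R);\Z/p)$ is concentrated in even degrees (the first observation of the paper), only odd-page differentials in this spectral sequence can be nonzero, and the index is assembled from the images of these differentials landing in the row $t=0$.

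The key computational input---mirroring the wreath power characteristic class technique of \cite{BBKV18}---is the restriction along the subgroup inclusion $\iota\colon BC_p \hookrightarrow B(SO(n)\wr C_p)$. The $pn$-dimensional imprimitive representation restricts to $n$ copies of the real regular $C_p$-representation $\rho_\R$, whose total mod $p$ Pontryagin class equals $1 \pm v^{p-1}$. Using the $p$-th power Frobenius identity modulo $p$,
\[
\iota^* p(\eta) \;=\; (1 \pm v^{p-1})^n \;=\; (1 \pm v^{p^{a+1}-p^a})^q \;=\; \sum_{k=0}^{q} \binom{q}{k}(\pm 1)^k v^{k(p^{a+1}-p^a)}.
\]
For the relatively prime case $p\nmid n$ with $n>1$ (Theorem~\ref{relprimeso}), the argument is a direct transcription of Theorem~\ref{relprime}, producing $v^p$ and $uv^{p-1}$ as generators of the index via these Pontryagin pullbacks together with Bockstein naturality. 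For the general case with $q>1$ (the first branch of Theorem~\ref{indexfso}), iterating the same construction yields $uv^{p^{a+1}-1}$ and $v^{p^{a+1}}$; the reverse containment follows from the description of $H^*(F_n(\R);\Z/p)$ as $H^*(BSO(n)^p;\Z/p)$ modulo the ideal generated by the Pontryagin classes pulled back from $BSO(pn)$.

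The main obstacle is the case $q=1$, where $n=p^a$ and the expansion above collapses to $1 \pm v^{p^{a+1}-p^a}$, yet the index acquires the additional generator $v^{p^{a+1}-1}$. This element does not appear in the complex setting and is a genuinely real phenomenon: with only two surviving Pontryagin terms, and no Euler class available (since $p^a$ is odd and hence $BSO(p^a)$ has only Pontryagin generators), an extra odd-page differential in the Serre spectral sequence is forced. My plan is to isolate this differential through a careful low-page analysis, possibly supplemented by the naturality of a suitable secondary cohomology operation, to deduce $v^{p^{a+1}-1} \in \Ker\pi^*$. The hardest step will be verifying that no strictly smaller element lies in the kernel, so that the index is exactly $(v^{p^{a+1}-1})$; this requires tight control of the multiplicative and Steenrod structure of $H^*(F_{p^a}(\R)_{hC_p};\Z/p)$, and is where I expect most of the work in Theorem~\ref{indexfso} to lie.
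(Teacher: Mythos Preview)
Your overall framework—reducing the index to the ideal in $H^*(B(SO(n)\wr C_p))$ generated by pullbacks of Pontryagin classes along $\phi$—matches the paper's. The gap is in the $q=1$ case, and it stems from working only with the restriction $\iota^*$ to $BC_p$.

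The formula $\iota^* p(\eta)=(1\pm v^{p-1})^n$ captures only the component of the wreath-power Pontryagin class lying in $\pi^*H^*(BC_p)$. The paper's full formula (Proposition~\ref{pontwreath}) is
\[
p_i(\gamma^n_{SO}\wr C_p)=\Big[\sum_{r}(-1)^{r-i}P(p_r)(1+v^{p-1})^{n-2r}\Big]_{2i}+z_{4i}(p),
\]
where $P$ is the diagonal $p$-th tensor power and $z$ collects the symmetrized cross terms in $H^*(BSO(n)^p)^{C_p}$. These extra pieces are essential. Working in $H^*(BW_n^{SO})$ modulo the ideal generated by $p_1(\gamma\wr C_p),\ldots,p_{k-1}(\gamma\wr C_p)$, Proposition~\ref{5pp} tracks the relations degree by degree, and combining \eqref{pind3} at $m=0$ with \eqref{pind4} yields the identity $v^{p^{a+1}-1}=\alpha\,z_{2(p^{a+1}-1)}(p)$ for a unit $\alpha$. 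Here is the point that singles out $q=1$: when $n=p^a$ is odd, the top Pontryagin class of $BSO(p^a)$ is $p_{(p^a-1)/2}$, and a trivial degree count shows that no tensor $p_{q_1}\otimes\cdots\otimes p_{q_p}$ can reach total index $(p^{a+1}-1)/2$, since $p\cdot(p^a-1)/2=(p^{a+1}-p)/2<(p^{a+1}-1)/2$. Hence $z_{2(p^{a+1}-1)}(p)=0$ and $v^{p^{a+1}-1}$ lies in the index. No additional spectral-sequence differential and no secondary operation is involved—this is a pure degree-vanishing argument inside the ring $H^*(BW_n^{SO})$. For $q>1$ the same $z$-term is genuinely nonzero (it contains, for instance, $I(p_{(p^a+1)/2}\otimes\cdots\otimes p_{(p^a+1)/2}\otimes p_{(p^a-p)/2})$), which is exactly why the index stops one degree later there.

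You also have the relative difficulty reversed: once Proposition~\ref{5pp} is in place, the upper bound (that $v^{p^{a+1}-1}$ lies in the index when $q=1$) is the one-line degree argument above, while the lower bound (that $v^{p^{a+1}-2}$ does not) follows directly from relation~\eqref{pind3} at $m=0$.
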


We now describe geometric consequences associated with the index computation above. For a convex body $\CC$ inside $k^{pn}$, we consider the $p$-fold orthogonal shadow : $(p_{V_1}(\CC), \cdots p_{V_p}(\CC))$, where $V_i$ are mutually orthogonal subspaces of dimension $n$, and $p_{V_i}$ is the projection. We have the following conclusion about continuous functions defined on the space $\con(k^{pn})$ of convex bodies inside $k^{pn}$, the distance function being the Hausdorff metric. 

\begin{theorem} \label{func}
a) Let $n=p^aq$ such that $p\nmid q$,  and $r\leq 2(\frac{p^{a+1}-1}{p-1})$. Let $\alpha_1,\cdots, \alpha_r : \con (\C^{pn})\to \R$. For every proper convex body $C\subset \C^{pn}$, there exist $p$ mutually orthogonal $n$-dimensional subspaces $V_1,\cdots, V_p$ of $\C^{pn}$ such that
\[\alpha_i(p_{V_i}(C))=\cdots =\alpha_i(p_{V_p}(C))\quad \text{for all $1\leq i\leq r$.} \] 
b) Let $n$, $p$, $a$ and $q$ as above,  and  $r< 2(\frac{p^{a+1}-1}{p-1})$. Let $\alpha_1,\cdots, \alpha_r : \con (\R^{pn})\to \R$. For every proper convex body $C\subset \R^{pn}$, there exist $p$ mutually orthogonal $n$-dimensional subspaces $V_1,\cdots, V_p$ of $\R^{pn}$ such that
\[\alpha_i(p_{V_i}(C))=\cdots =\alpha_i(p_{V_p}(C))\quad \text{for all $1\leq i\leq r$.} \] 
\end{theorem}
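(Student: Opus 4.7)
The plan is the standard test-map scheme from topological combinatorics, applied to the Fadell--Husseini index computations of Theorems~A and~B. Let $W = \{(x_1,\dots,x_p)\in\R^p : \sum_j x_j = 0\}$ with $C_p$ acting by cyclic permutation of coordinates; since $p$ is odd, $W$ contains no trivial summand and decomposes over $\R$ as $\bigoplus_{k=1}^{(p-1)/2} L_k$, where $L_k \cong \R^2$ carries rotation by $2\pi k/p$. Given a proper convex body $C \subset k^{pn}$ and continuous functionals $\alpha_1,\dots,\alpha_r$, I would define a continuous $C_p$-equivariant test map
\[ \phi : F_n(k) \longrightarrow W^r, \]
whose $i$-th $W$-component is $\bigl(\alpha_i(p_{V_j}(C)) - \tfrac{1}{p}\sum_l \alpha_i(p_{V_l}(C))\bigr)_{j=1}^p \in W$. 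By construction, a flag at which $\phi$ vanishes is exactly one with the asserted property, so if no such flag exists then $\phi$ factors through $W^r\setminus\{0\}$ and, composed with the equivariant radial retraction, yields a $C_p$-equivariant map $F_n(k) \to S(W^r)$.

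The next step is to compute $\Index_{C_p}(S(W^r))$. Since $W$ has no trivial summand, $C_p$ acts freely on $S(W^r)$; the Gysin sequence for the sphere bundle $S(W^r)_{hC_p} \to BC_p$ identifies this index with the principal ideal generated by the mod-$p$ Euler class $e(W^r)$. Because $e(L_k) = kv$ for $1\le k \le (p-1)/2$,
\[ e(W^r) = e(W)^r = \bigl((\tfrac{p-1}{2})!\bigr)^r v^{r(p-1)/2} \]
is a unit multiple of $v^{r(p-1)/2}$, so $\Index_{C_p}(S(W^r)) = \bigl(v^{r(p-1)/2}\bigr)$.

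Finally, the existence of the equivariant map would force $\bigl(v^{r(p-1)/2}\bigr) \subset \Index_{C_p}(F_n(k))$, which I would rule out using Theorems~A and~B. In case~(a), Theorem~A gives $\Index_{C_p}(F_n(\C)) = (uv^{p^{a+1}-1}, v^{p^{a+1}})$; the hypothesis $r \le 2(p^{a+1}-1)/(p-1)$ yields $r(p-1)/2 \le p^{a+1}-1$, so $v^{r(p-1)/2}$ sits in cohomological degree at most $2(p^{a+1}-1) = 2p^{a+1}-2$, strictly below the degrees $2p^{a+1}-1$ and $2p^{a+1}$ of the two generators. Hence $v^{r(p-1)/2}$ cannot belong to the ideal, the containment fails, and the equivariant map cannot exist. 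In case~(b), Theorem~B yields $(v^{p^{a+1}-1})$ when $q=1$ and $(uv^{p^{a+1}-1}, v^{p^{a+1}})$ when $q>1$; the \emph{strict} inequality $r < 2(p^{a+1}-1)/(p-1)$ gives $r(p-1)/2 < p^{a+1}-1$, placing $v^{r(p-1)/2}$ strictly below the lowest-degree generator in each subcase, and the same contradiction closes the argument. The only real technical step is the Euler-class identification for $\Index_{C_p}(S(W^r))$; after that, everything is the degree bookkeeping already encoded in the hypotheses on $r$.
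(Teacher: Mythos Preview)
Your proof is correct and follows essentially the same test-map/configuration-space approach as the paper: your $W$ is the paper's reduced regular representation $\overline{\rho}$, your explicit Euler-class computation of $\Index_{C_p}(S(W^r))=(v^{r(p-1)/2})$ is what the paper cites from \cite{BaKu21}, and the final degree comparison with Theorems~A and~B is identical. The only cosmetic difference is that the paper first maps to $\rho^r$ and then projects off the diagonal, whereas you build the map into $W^r$ directly by subtracting the average.
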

\begin{proof}
We write the proof in the real case. The complex case is entirely similar. For a given convex $C\subset \R^{pn}$, $\alpha=(\alpha_1,\cdots,\alpha_r)$ may be used to construct a continuous function $F_C: F_n(\R)\to \R^{pr}$ by 
\[F_C(V_1,\cdots,V_p)=(\alpha(p_{V_1}(C)),\cdots,\alpha({p_{V_p}(C)}),\]
which is $C_p$-equivariant, where the right hand side is identified with $\rho^r$, a direct sum of $r$-copies of the regular representation.  If the hypothesis is not true, then the map $F_C$ avoids the diagonal, and the image of the projection $\tilde{F_C}$ onto the complementary subspace is non-zero at every point. The latter is a direct sum of $r$-copies of the reduced regular representation (denoted by $\overline{\rho}$). Thus we obtain a $C_p$-equivariant map $F_n(\R) \to S(\overline{\rho}^r)$, the unit sphere inside $\overline{\rho}^r$.  Now, $\Index_{C_p}(S(\overline{\rho}^r))=(v^{\frac{(p-1)r}{2}})$ \cite[Page 4]{BaKu21} and can not sit inside  $\Index_{C_p}(F_n(\R))$, which is either $(v^{p^{a+1}-1})$ or $(uv^{p^{a+1}-1},v^{p^{a+1}})$. This implies $\tilde{F_C}$ must be zero for some flag $(V_1,\cdots,V_p)$, a contradiction.
\end{proof}

There are interesting examples of continuous functions on the space of convex bodies which come from various measures. The following corollary is a direct consequence of the proof above, as in \cite[Corollary 1.4]{BBKV18}.
\begin{cor} \label{funccor}
With $n,p,q,a$ and $r$ as above, let $\alpha_1,\alpha_2,\cdots,\alpha_r: \con_n^{pn}\to \R$. Then, for every proper convex body $C\subset \R^{pn}$ containing the origin in its interior there exists $p$ mutually orthogonal $n$-dimensional subspaces $V_1,\cdots,V_p$ such that \[\alpha_i(C\cap V_1)=\cdots=\alpha_i(C\cap V_p)\] for all $1\leq i\leq r$. 
\end{cor}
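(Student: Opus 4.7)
The plan is to adapt the proof of Theorem \ref{func}(b), substituting the orthogonal sections $C \cap V_i$ for the shadows $p_{V_i}(C)$. All other ingredients, including Theorem B and the computation of the index of $S(\overline{\rho}^r)$, transfer unchanged.

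Given a proper convex body $C \subset \R^{pn}$ with $0 \in \mathrm{int}(C)$ and $\alpha = (\alpha_1,\dots,\alpha_r)$, I would construct a $C_p$-equivariant map
\[ G_C : F_n(\R) \longrightarrow \R^{pr}, \qquad G_C(V_1,\dots,V_p) = \bigl(\alpha(C \cap V_1),\dots,\alpha(C \cap V_p)\bigr), \]
where the target is identified with the regular representation $\rho^r$. Equivariance under the permutation action of $C_p$ on the $p$ factors is immediate. The substantive point is continuity of $G_C$, which reduces, by continuity of each $\alpha_i$ in the Hausdorff metric, to the continuity of the assignment $V \mapsto C \cap V$ from the Grassmannian of $n$-planes in $\R^{pn}$ to the space of $n$-dimensional convex bodies. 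Here the hypothesis $0 \in \mathrm{int}(C)$ is essential: it guarantees that $C \cap V$ is full-dimensional inside $V$ for every $n$-plane $V$, and a standard tubular-neighbourhood argument around $\boundary C$ then bounds the Hausdorff fluctuation of $C \cap V'$ as $V'$ varies near $V$.

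Granted this continuity, the remainder of the proof is the index comparison from Theorem \ref{func}(b). If no flag realises all the required equalities, then the image of $G_C$ avoids the diagonal of $\rho^r$; projecting onto the complementary summand $\overline{\rho}^r$ and normalising produces a $C_p$-equivariant map $F_n(\R) \to S(\overline{\rho}^r)$. Monotonicity of the Fadell--Husseini index then forces
\[ (v^{(p-1)r/2}) = \Index_{C_p}\bigl(S(\overline{\rho}^r)\bigr) \subset \Index_{C_p}(F_n(\R)), \]
and Theorem B identifies the right-hand side as either $(v^{p^{a+1}-1})$ or $(uv^{p^{a+1}-1},v^{p^{a+1}})$. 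The hypothesis $r < 2(p^{a+1}-1)/(p-1)$ gives $(p-1)r/2 < p^{a+1}-1$, so $v^{(p-1)r/2}$ lies in neither ideal, a contradiction.

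The only genuine obstacle I anticipate is the Hausdorff continuity of the intersection map $V \mapsto C \cap V$ and the way in which the interior-point hypothesis forces nondegeneracy of every slice; without it the intersection can collapse or jump in dimension and the map $G_C$ need not be continuous. Beyond that technical point the argument is a formal translation of the shadow case and uses Theorem B as a black box.
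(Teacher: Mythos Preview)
Your proposal is correct and matches the paper's approach exactly: the paper does not write out a separate proof but simply declares the corollary a ``direct consequence of the proof above'' (i.e.\ of Theorem~\ref{func}), citing \cite[Corollary 1.4]{BBKV18} for the analogous passage from shadows to slices. Your identification of the interior-point hypothesis as what guarantees continuity of $V\mapsto C\cap V$ is precisely the extra ingredient needed, and otherwise the index comparison is verbatim that of Theorem~\ref{func}(b).
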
 

This result has multiple implications as was pointed out in \cite{BBKV18}. We prove the $p$-fold version of orthogonal transformations of inertia tensors in the following theorem. 
\begin{theorem}
Let $a\geq 0$ and $n=p^a$ or $2p^a$. Let $X\subset \R^{pn}$ be a finite set of points. There exist projections $P_1, P_2,\cdots, P_p: \R^{pn}\to \R^{pn}$ onto mutually orthogonal $n$-dimensional subspaces such that the $p$ inertia tensors 
\[I_{P_i}=\sum_{x\in X} P_i(x)\otimes P_i(x)\] for $i=\{1,\cdots, p\}$ 
are transformable from one to another by orthogonal transformations.
\end{theorem}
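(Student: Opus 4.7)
The plan is to run the same scheme as in the proof of Theorem \ref{func}(b), but with $r=n$ carefully chosen functions designed to detect orthogonal equivalence of the inertia tensors. Two symmetric operators on $\R^{pn}$ whose images lie in $n$-dimensional subspaces are conjugate under $O(pn)$ if and only if they share the same spectrum; the $(p-1)n$ automatic zero eigenvalues cancel out, so the condition reduces to equality of the first $n$ power sums $\tr(I^k)$ for $k=1,\dots,n$ by Newton's identities. Fix the finite set $X$, and for each $k\in\{1,\dots,n\}$ consider the continuous $C_p$-equivariant map
\[ \alpha_k: F_n(\R)\longrightarrow \rho,\qquad (V_1,\dots,V_p)\longmapsto \bigl(\tr(I_{P_1}^k),\dots,\tr(I_{P_p}^k)\bigr), \]
where $\rho$ is the regular representation of $C_p$. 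If no flag realized the desired conclusion, then at every flag some $\alpha_k$ would miss the diagonal of $\rho$; projecting onto the reduced regular representation would then produce a nowhere-zero equivariant map $\tilde\alpha:F_n(\R)\to \bar\rho^{\,n}$, and normalizing would yield an equivariant map $F_n(\R)\to S(\bar\rho^{\,n})$. Functoriality of the index would then force
\[ \Index_{C_p}\!\bigl(S(\bar\rho^{\,n})\bigr) = \bigl(v^{n(p-1)/2}\bigr) \subseteq \Index_{C_p}(F_n(\R)), \]
the first equality being the computation cited in the proof of Theorem \ref{func}.

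Theorem B rules this inclusion out for both permitted shapes of $n$. When $n=p^a$, the target index equals $(v^{p^{a+1}-1})$, and the exponent $n(p-1)/2=(p^{a+1}-p^a)/2$ is strictly smaller than $p^{a+1}-1$ for every odd prime $p$ and every $a\geq 0$ (equivalently, $p^a(p+1)/2>1$), so $v^{n(p-1)/2}$ does not lie in the ideal. When $n=2p^a$ (with $p$ odd, so $p\nmid 2$), the target index equals $(uv^{p^{a+1}-1},v^{p^{a+1}})$, whose pure-$v$ part begins at $v^{p^{a+1}}$; however $n(p-1)/2=p^{a+1}-p^a<p^{a+1}$, so once again $v^{n(p-1)/2}$ escapes the ideal. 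In each case the required containment fails, delivering the contradiction.

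The step that most deserves care --- and the only conceptual (as opposed to book-keeping) move in the argument --- is the translation from ``all $I_{P_i}$ pairwise orthogonally conjugate in $O(pn)$'' to ``all $\alpha_k$ land on the diagonal.'' It uses that over $\R$, Newton's identities give a bijective correspondence between the first $n$ power sums of a symmetric $n\times n$ matrix and its characteristic polynomial, combined with the observation that the zero eigenspaces of the $I_{P_i}$ inside $\R^{pn}$ automatically have matching dimension $(p-1)n$, so that equality of the spectra of $I_{P_i}|_{V_i}$ is the same as equality of the full $O(pn)$-spectra of the $I_{P_i}$. Once this translation is made, everything else is a direct application of Theorem B.
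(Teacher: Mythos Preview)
Your proof is correct and follows essentially the same route as the paper's: reduce orthogonal equivalence of the $I_{P_i}$ to equality of $n$ scalar invariants, package these as a $C_p$-equivariant map $F_n(\R)\to\rho^n$, and invoke the index comparison from Theorem~B. The only cosmetic difference is that the paper uses the $n$ relevant coefficients of the characteristic polynomial of $I_{P_i}$ as its test functions (and cites Theorem~\ref{func} rather than rerunning the index argument), whereas you use the power sums $\tr(I_{P_i}^k)$; Newton's identities make the two choices interchangeable over $\R$.
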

\begin{proof}
The configuration space of all projections $(P_1,\cdots, P_p)$ onto mutually orthogonal $n$-dimensional subspaces has an action of $C_p$ by cyclic permutations. This can be identified equivariantly with flag manifold $F_n(\R)$ by sending the $P_i$ to its image.  If we can show the characteristic polynomials $\det(I_{P_i}-\lambda I_{pn})$ are same for all the matrices of the bilinear forms defined by $I_{P_i}$, we are done. Note that as $\Ker(P_i)$ are $(p-1)n$-dimensional,  the characteristic polynomials have at least $(p-1)n$ zero roots. In all the characteristic polynomials we have $n$ non-zero coefficients in $\lambda^{pn-1},\cdots,\lambda^{pn-n}$. Let us denote them by $\alpha(P_i)=(\alpha_{pn-1}(P_i),\cdots,\alpha_{pn-n}(P_i))$ for $i\in\{1,\cdots,p\}$. Now consider the $C_p$-equivariant map 
\[F_I:F_n(\R)\to \R^{pn}, \mbox{ given by }
(P_1,\cdots,P_p)\mapsto (\alpha(P_1),\cdots, \alpha(P_p)), \]
 which intersects the diagonal by Theorem \ref{func}. Therefore, there exists a $(P_1,\cdots, P_p)$ such that $F_I(P_1,\cdots, P_p)=(\alpha(P_1),\cdots, \alpha(P_p))$ lies in the diagonal subspace $\Delta(\R^n)\subset \R^{pn}$ and thus, $\alpha(P_1)=\cdots=\alpha(P_p)$. This proves the theorem. 
\end{proof}

\begin{mysubsection}{Organization}
In \S \ref{cohflag}, we compute the cohomology of the equidimensional flag manifolds used in the document. In \S \ref{wrpowsp}, the cohomology  of the wreath power of spaces is noted down, and using this expression, we reduce the index computation to that of certain characteristic classes. In \S \ref{wreathcharclass}, the characteristic classes of the wreath power of vector bundles are computed with a view towards index calculations. In \S \ref{indcomp}, we complete the index calculations. 
\end{mysubsection}

\begin{notation}
Throughout the document, $p$ denotes an odd prime. We fix the notation $\sigma$ for a fixed generator of the cyclic group $C_p$ of order $p$. We use the  following notation
\begin{itemize}
\item $Gr_k(\C^n)$ denotes the complex grassmannian of $k$-planes in $\C^n$. As a homogeneous space, $Gr_k(\C^n)\cong U(n)/U(k)\times U(n-k)$. 
\item $Gr_k(\R^n)$ denotes the real grassmannian of $k$-planes in $\R^n$. As a homogeneous space, $Gr_k(\R^n)\cong O(n)/O(k)\times O(n-k)$.
\item $\tilde{Gr}_k(\C^n)$ denotes the oriented grassmannian of oriented $k$-planes in $\R^n$.  As a homogeneous space, $\tilde{Gr}_k(\R^n)\cong SO(n)/SO(k)\times SO(n-k)$.
\item $\gamma^n_U$ stands for the universal $n$-plane bundle over $BU(n)$, and $\gamma^n_{SO}$ stands for the universal $n$-plane bundle over $SO(n)$. 
\item $F_n^U$ denotes the flag manifold 
\begin{align*}
F_n^U &=\{V_1\subset V_2 \subset \cdots \subset V_p\subset \C^{pn}\mid\di(V_i)=ni\}\\
&= \{(W_1,\cdots, W_p),\, W_i \subset \C^{pn}  \mid  \di(W_i)=n;\,W_i \perp W_k \;\text{if}\,i\neq k\}.
\end{align*}
Observe that as a homogeneous space, $F_n^U \cong U(pn)/(U(n))^p$.
\item $F_n^{O}$ denotes the flag manifold 
\begin{align*}
F_n^{O} &=\{V_1\subset V_2 \subset \cdots \subset V_p\subset \R^{pn}\mid\di(V_i)=ni\}\\
&= \{(W_1,\cdots, W_p),\, W_i \subset \R^{pn}  \mid  \di(W_i)=n;\,W_i \perp W_k \;\text{if}\,i\neq k\}.
\end{align*}
Observe that as a homogeneous space, $F_n^{O} \cong O(pn)/(O(n))^p$.
\item We define $F_n^{SO} = SO(pn)/(SO(n))^p$.  For the purposes of this paper, it suffices to work with $F_n^{SO}$ instead of $F_n^O$. 
\item For $r\leq p$, $F_{n,r}^U$ denotes the flag manifold 
\[
F_{n,r}^U = \{(W_1,\cdots, W_r),\, W_i \subset \C^{pn}  \mid  \di(W_i)=n;\,W_i \perp W_k \;\text{if}\,i\neq k\}. \]
As a homogeneous space, $F_{n,r}^U \cong U(pn)/(U(n)^r \times U((p-r)n)$. 
\item For $r\leq p$, $F_{n,r}^{SO} = SO(pn)/ (SO(n)^r \times SO((p-r)n)$. 
\end{itemize}
 In the case where the coefficient group in $H^\ast(X)$ is not specified, it is assumed to be $\Z/p$. For a formal sum of cohomology classes $\phi$ $\in H^\ast X$, we denote the degree $2k$ part of $\phi$ by $[\phi]_k$.
\end{notation}

\begin{ack} 
The first author would like to thank Surojit Ghosh for certain helpful conversations. The research of the second author was supported by CSIR-SRF 09/934(0008)/2017-EMR1.
\end{ack}
\section{Cohomology of flag manifolds}\label{cohflag}
The main objective of this section is to compute the cohomology of $F_j^U$ and $F_j^{SO}$. We start with the unitary case. Along the way we also compute the cohomology of $F_{j,r}^U$ which is defined as 
\[
F_{j,r}^U = \{(W_1,\cdots, W_r),\, W_i \subset \C^{pj} \, \mid \, \di(W_i)=j;\,W_i \perp W_k \;\text{if}\,i\neq k\}.\]
One directly observes that $F_{j,r}^U$ is homeomorphic to  $U(jp)/{{U(j)}^r \times U(pj-rj)}$. Forgetting the last subspace $W_r$ gives the fibration
\begin{myeq}\label{F1}
Gr_j(\C^{(p-r+1)j})\xrightarrow{i} F_{j,r}^{U}\to F_{j,{r-1}}^U,
\end{myeq}
where $Gr_k(\C^n)$ is the grassmannian of $k$-planes in $\C^n$. Recall that the cohomology of $Gr_k(\C^n)$ is computed via the fibration 
\[
Gr_k(\C^n)\to BU(k)\times BU(n-k) \to BU(n).
\]
We denote the graded algebra $H^\ast(Gr_k(\C^n))$ by $H_{n,k}$. Recall that 
$$H^\ast(BU(n))\cong \Z/p[c_1,\cdots,c_n],$$
 where $c_i$ are the$\pmod p$ reductions of the integral Chern classes. That is, $c_i$ is the$\pmod p$ reduction of the $i^{th}$ Chern class of $\gamma^n_U$, the universal $n$-plane bundle over $BU(n)$. The algebra $H_{n,k}$ has the form 
\begin{myeq}\label{Hdefn}
H_{n,k}= \Z/p[c_1,\cdots,c_k,c_1',\cdots,c_{n-k}']/(\hat{c}_1,\cdots, \hat{c}_{n})=\Z/p[c_1,\cdots,c_k]/(\tilde{c}_{n-k+1},\cdots, \tilde{c}_{n}).
\end{myeq}
The elements $c_i$ and $c_i'$ are pull-backs of the Chern classes over $BU(k)$ and $BU(n-k)$ respectively. Here, $\hat{c_i}$ are defined by the equation 
\[
1+\hat{c}_1+\cdots +\hat{c}_n = (1+c_1+\cdots+c_k)(1+c_1'+\cdots +c_{n-k}').
\]
In this equation, for $i\leq n-k$, $\hat{c}_i$ has the form $c_i+c_i'$ plus lower order terms in the $c_i$ and $c_i'$. Therefore, it serves as a means of expressing  the $c_i'$ inductively in terms of the $c_i$ in the algebra $H_{n,k}$. Finally for $i>n-k$, we may incorporate this formula of the $c_i'$ in terms of the $c_i$ into $\hat{c}_i$ to obtain $\tilde{c}_i$ in the expression above. We now proceed towards the computation for $F_j^U$.  
\begin{prop}\label{P1}
The cohomology groups of $F_{j,r}^U$ are concentrated in even degrees.
\end{prop}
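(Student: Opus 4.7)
The plan is to prove this by induction on $r$, using the fibration \eqref{F1} and a parity argument in the Serre spectral sequence.

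\textbf{Base case ($r=1$).} Here $F_{j,1}^U = U(pj)/(U(j)\times U((p-1)j)) = Gr_j(\C^{pj})$, and the algebra $H_{pj,j}$ described in \eqref{Hdefn} is generated (as a quotient of a polynomial ring in the Chern classes $c_i$) by elements of even degree. Hence the cohomology is concentrated in even degrees.

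\textbf{Inductive step.} Assume that $H^\ast(F_{j,r-1}^U)$ is concentrated in even degrees. The fiber of \eqref{F1} is $Gr_j(\C^{(p-r+1)j})$, whose cohomology is again of the form $H_{(p-r+1)j,j}$ and therefore lives in even degrees as well. I would then run the Serre spectral sequence
\[
E_2^{s,t} = H^s(F_{j,r-1}^U;\, H^t(Gr_j(\C^{(p-r+1)j}))) \Longrightarrow H^{s+t}(F_{j,r}^U).
\]
Since both cohomology rings involved vanish in odd degrees and the local coefficient system is untwisted (the base is simply connected, which is clear since $F_{j,r-1}^U$ is a homogeneous space of a connected simply-connected-up-to-the-center Lie group fibered over simply connected pieces — and in any case one can iterate from the base $r=1$), the $E_2$-page is supported in bidegrees $(s,t)$ with both $s$ and $t$ even.

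\textbf{Parity of the differentials.} The differential $d_k \colon E_k^{s,t} \to E_k^{s+k,\,t-k+1}$ shifts $t$ by $-k+1$. If the source has $t$ even, the target has $t$-coordinate of parity opposite to $k$, so for any $k\geq 2$ either the source or the target lies in odd total degree in one coordinate and hence vanishes. Thus every differential is zero and the spectral sequence degenerates at $E_2$. Consequently $H^\ast(F_{j,r}^U)$ is again concentrated in even degrees, completing the induction.

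\textbf{Main obstacle.} The step requiring a little care is verifying that the local coefficient system is trivial, which boils down to simple-connectedness of $F_{j,r-1}^U$; this follows inductively from simple-connectedness of Grassmannians together with the long exact sequence of homotopy groups for \eqref{F1}. Once that is in place, the parity argument is purely formal.
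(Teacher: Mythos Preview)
Your proof is correct and follows essentially the same approach as the paper: induction on $r$ via the fibration \eqref{F1}, with the Serre spectral sequence degenerating at $E_2$ because both base and fibre have cohomology in even degrees only. The paper's version is terser (it does not explicitly discuss the local coefficient system or spell out the parity argument), but the content is the same.
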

\begin{proof}
Using \eqref{F1}, we  proceed by induction on $r$. For $r=1$, $F_{j,1}^U$ is homeomorphic to $Gr_j(\C^{jp})$, which by \eqref{Hdefn} is concentrated only in even degrees. 
Now assume that $H^*(F_{j,{r-1}}^U)$ is concentrated in even degrees. Then, in the Serre spectral sequence for \eqref{F1}, the $E_2$-page is concentrated in even bidegrees. Therefore, all the differentials are forced to be $0$ for degree reasons, and the result follows.
\end{proof}

An explicit formula for the cohomology ring of $F_j^U$ is now derived. It involves computations with the fibration
 \begin{myeq}\label{flagr}
F_{j,r}^U\to B(U(j)^r\times U(p-r)j) \xrightarrow{p} BU(jp).
\end{myeq}
 The map 
\[
BU(j)^r \times BU((p-r)j) \to BU(pj) 
\]
classifies the bundle $\oplus_{i=1}^r \pi_i^\ast\gamma^j_U \oplus \pi_0^\ast \gamma^{(p-r)j}_U$, where 
\[
\pi_i : BU(j)^r \times BU((p-r)j) \to BU(j) ~\mbox{ (}i^{th}\mbox{ factor)},\,
\]
\[
 \pi_0:BU(j)^r \times BU((p-r)j) \to BU((p-r)j).
\]   
We now have the formula $x_i=p^\ast(c_i)$ for $1\leq i \leq jp$, where the $x_i$ are defined by 
\[
1+x_1+\cdots + x_{pj}= \Big[\prod_{i=1}^r (1+c_{1,i}+\cdots + c_{j,i})\Big] (1+c_{1,0}+\cdots + c_{(p-r)j,0}). 
\]
Here the notation is defined as 
\[
c_{i,l}=\pi_l^\ast(c_i)~\mbox{ for } 1\leq i \leq j , ~ c_{i,0}=\pi_0^\ast(c_i) ~\mbox{ for } 1\leq i \leq (p-r)j.
\] 
In the following theorem, the notation $c_{i,l}$ also refers to the image in the cohomology of the flag manifold $F_{j,r}^U$. 
\begin{theorem}\label{T2}
In terms of the notations above,
$$H^*(F_{j,r}^U)=\Z/p[c_{i,l}]/( x_1,\cdots,x_{jp}).$$
\end{theorem}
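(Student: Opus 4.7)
The plan is to apply the Serre spectral sequence to the fibration \eqref{flagr}
$$F_{j,r}^U\to E \xrightarrow{p} BU(jp),$$
where $E = B(U(j)^r\times U((p-r)j))$. The cohomologies $H^*(BU(jp))=\Z/p[c_1,\dots,c_{jp}]$ and $H^*(E)=\Z/p[c_{i,l}]$ are both concentrated in even degrees, and by Proposition \ref{P1} so is $H^*(F_{j,r}^U)$. Consequently the $E_2$-page $H^*(BU(jp))\otimes H^*(F_{j,r}^U)$ lives in even bidegrees, forcing all differentials to vanish and the spectral sequence to collapse at $E_2$.

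Next, I would use the definition of the classifying map to identify the ideal of relations. By construction $p^*(c_i) = x_i$ for $1\le i\le jp$, so the ideal $p^*(H^{>0}(BU(jp)))\cdot H^*(E)$ inside $\Z/p[c_{i,l}]$ equals $(x_1,\dots,x_{jp})$. Since $i^*\circ p^* = 0$ on positive-degree classes of the base, the edge homomorphism $i^*\colon H^*(E)\to H^*(F_{j,r}^U)$ descends to a ring map
$$\Z/p[c_{i,l}]/(x_1,\dots,x_{jp}) \longrightarrow H^*(F_{j,r}^U),$$
which is surjective because, under the collapse, the edge map $i^*$ hits all of $E_\infty^{0,*}=H^*(F_{j,r}^U)$.

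It then remains to check that this surjection is an isomorphism, which I expect to be the main (though mild) obstacle. The collapse of the spectral sequence identifies $H^*(E)$ with $H^*(BU(jp))\otimes H^*(F_{j,r}^U)$ as a graded $H^*(BU(jp))$-module, so
$$H^*(E)\otimes_{H^*(BU(jp))} \Z/p \;\cong\; H^*(F_{j,r}^U)$$
as graded $\Z/p$-vector spaces. The left side is exactly $\Z/p[c_{i,l}]/(x_1,\dots,x_{jp})$, so a degree-by-degree dimension comparison upgrades the surjection above to an isomorphism of graded rings, yielding the claimed presentation.
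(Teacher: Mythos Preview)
Your proposal is correct and follows essentially the same approach as the paper: both argue that the Serre spectral sequence for \eqref{flagr} collapses at $E_2$ because base and fibre are concentrated in even degrees, and then identify $H^*(F_{j,r}^U)$ with the quotient of $H^*(E)$ by the ideal generated by $p^*$ of the positive-degree classes. The paper states the last step as immediate, whereas you spell out the freeness/dimension-count argument via $H^*(E)\otimes_{H^*(BU(jp))}\Z/p$; this added detail is fine, though you might note explicitly that the collapse makes $H^*(E)$ a \emph{free} $H^*(BU(jp))$-module (Leray--Hirsch), which is what justifies your tensor identification.
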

\begin{proof}
The Serre spectral sequence for \eqref{flagr} collapses at the $E_2$-page as both $H^\ast(F_{j,r}^U)$ (by Proposition \ref{P1}) and $H^\ast(BU(pj))$ are concentrated in even degrees. From the convergence of the spectral sequence, we see that the cohomology of the fibre $F_j^U$ is obtained from the cohomology of the total space by quotienting out the ideal generated by pulling back the positive degree elements from the base. The result is now immediate. 
%
\end{proof}

In the case $G=SO$, the arguments are slightly more delicate as it is no longer true that the corresponding grassmannian has a CW complex structure with only even degree cells. Recall the cohomology of $BSO(n)$ and $SO(n)$, \cite{Bor53} 
\[H^*(BSO(n)) \cong
\begin{cases}
\Z/p[p_1,p_2,\cdots, p_{\frac{n-1}{2}}]\quad &\text{if $n$ odd},\\
\Z/p[p_1,p_2,\cdots, p_{\frac{n}{2}-1},e_n]\quad &\text{if $n$ even}
\end{cases}\]
where $p_i$ are the Pontrjagin classes of the universal bundle with $\deg(p_i)=4i$, and $e_n$ is the Euler class with $\deg(e_n)=n$ which is non-zero only in the even case; and
\[H^*(SO(n);\Z/p) \cong
\begin{cases}
\Lambda_{\Z/p}[y_1,y_2,\cdots, y_{\frac{n-1}{2}}]\quad &\text{if $n$ odd},\\
\Lambda_{\Z/p}[y_1,y_2,\cdots, y_{\frac{n}{2}-1},\sigma_{n-1}]\quad &\text{if $n$ even},
\end{cases}\] 
where $\deg(y_i)=4i-1$ and $\deg(\sigma_{n-1})=n-1$.
In the following proposition we describe the cohomology of the oriented grassmannian $\tilde{Gr}_{n,k} = SO(n)/SO(k)\times SO(n-k)$. We use the notation $H'_{n,k}$ to denote the graded algebra which is abstractly isomorphic to $H_{n,k}$ via a degree doubling isomorphism. That is, 
\[
H_{n,k}'^{(2s)} = H_{n,k}^{(s)}, \mbox{ and } H_{n,k}'^{(2s-1)} = 0. 
\]
We suggestively write $p_i$ and $p_i'$ instead of $c_i$ and $c_i'$ in the algebra $H_{n,k}'$ to denote the corresponding image via the degree doubling isomorphism. 
\begin{prop}\label{P2}
We have the following formula for the cohomology of $\tilde{Gr}_j(\R^n)$ if $j>1$
\[
H^*(\tilde{Gr}_j(\R^n)) \cong \begin{cases} 
H'_{\frac{n-1}{2},\frac{j-1}{2}}[e_{n-j}]/(e_{n-j}^2=p'_{\frac{n-j}{2}}) &\mbox{if } n,j \mbox{ are odd}  \\
H'_{\frac{n-1}{2},\frac{j}{2}}[e_{j}]/(e_{j}^2=p_{\frac{j}{2}}) &\mbox{if } n \mbox{ is odd and } j \mbox{ is even}\\
H'_{\frac{n}{2},\frac{j}{2}}[e_j,e_{n-j}]/( e_{n-j}^2=p'_{\frac{n-j}{2}}, e_j^2=p_{\frac{j}{2}},e_je_{n-j}) &\mbox{if } n,j \mbox{ are even}\\
H'_{\frac{n}{2},\frac{j-1}{2}}[\sigma_{n-1}]/( \sigma_{n-1}^2) &\mbox{if } n \mbox{ is even and } j \mbox{ is odd}.
\end{cases}
\]
%
%
%
%
%

\end{prop}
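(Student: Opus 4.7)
The plan is to apply the Serre spectral sequence to the fibration
\[
\tilde{Gr}_j(\R^n) \longrightarrow BSO(j) \times BSO(n-j) \xrightarrow{\pi} BSO(n)
\]
with $\Z/p$ coefficients (and $p$ odd). The key input is the pullback $\pi^*$: by the mod-$p$ Whitney sum formula, $\pi^*(p_i) = \sum_{k+l=i} p_k p'_l$, and when $n$ is even, $\pi^*(e_n) = e_j \cdot e_{n-j}$; since $2e = 0$ forces the Euler class of any odd-rank oriented bundle to vanish mod the odd prime $p$, the Euler-class contribution disappears whenever one of $j, n-j$ is odd. First I would spell this pullback out carefully on all generators of $H^*(BSO(n))$.

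For Cases~1--3 (either $n$ is odd, or both $j$ and $n-j$ are even), the pullback $\pi^*$ carries the generators of $H^*(BSO(n))$ to a regular sequence inside $H^*(BSO(j)\times BSO(n-j))$, after identifying the top Pontrjagin class of each even-rank factor with the square of its Euler class. A parity argument parallel to the one in the proof of Theorem~\ref{T2} -- all generators of the three $BSO$-cohomologies lie in even degrees, so the $E_2$-page is concentrated in even bidegrees -- forces the spectral sequence to collapse at $E_2$, and Leray--Hirsch identifies
\[
H^*(\tilde{Gr}_j(\R^n)) \cong H^*(BSO(j)\times BSO(n-j)) \otimes_{H^*(BSO(n))} \Z/p.
\]
The Pontrjagin-class part of this quotient matches the claimed factor $H'_{n',j'}$ via the degree-doubling isomorphism with the complex Grassmannian algebra $H_{n',j'}$ (whose proof is nearly verbatim from the complex case); the surviving Euler classes contribute the additional generators carrying their intrinsic relations $e_k^2 = p_{k/2}$; and in the fully even case, the extra relation $e_j e_{n-j} = 0$ comes from $\pi^*(e_n) = e_j e_{n-j}$.

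The main obstacle is Case~4, where $n$ is even but $j$ is odd. Here $e_n$ is a genuine polynomial generator of $H^*(BSO(n))$, yet $\pi^*(e_n) = 0$ since both $j$ and $n-j$ are odd. Consequently the spectral sequence cannot collapse at $E_2$: the class $e_n \in E_2^{n,0}$ must be killed by a higher differential. A Poincar\'e-series comparison (matching $P(BSO(j)\times BSO(n-j))$ with $P(BSO(n))$ times the expected fiber series) forces the existence of a single transgressive class $\sigma_{n-1} \in H^{n-1}(\tilde{Gr}_j(\R^n))$ with $d_n(\sigma_{n-1}) = e_n$. Graded commutativity immediately gives $\sigma_{n-1}^2 = 0$, since $n-1$ is odd and $p$ is odd. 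The subtle step is verifying that this single transgression accounts for every higher differential and that the Pontrjagin-class quotient is exactly of the stated form; a Koszul-type analysis of the resulting $E_{n+1}$-page, combined with the geometric interpretation of $\sigma_{n-1}$ as the restriction to the fiber of the namesake exterior generator of $H^*(SO(n))$ in the universal $SO(n)$-fibration, should carry this through.
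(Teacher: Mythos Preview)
Your plan is correct and follows essentially the same route as the paper: both analyse the Serre spectral sequence of the fibration $\tilde{Gr}_j(\R^n)\to BSO(j)\times BSO(n-j)\to BSO(n)$, with Cases~1--3 handled by the regular-sequence collapse (the paper simply cites this as \cite[Theorem~26.1]{Bor53}) and Case~4 by identifying the transgression $d_n(\sigma_{n-1})=e_n$; for the remainder of Case~4 the paper compares directly with the spectral sequence for $Gr_{(j-1)/2}(\C^{n/2})\to BU(\tfrac{j-1}{2})\times BU(\tfrac{n-j+1}{2})\to BU(\tfrac{n}{2})$ rather than running a Poincar\'e-series/Koszul count, but the two arguments are equivalent.

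One caveat: your parenthetical parity justification (``all generators of the three $BSO$-cohomologies lie in even degrees, so the $E_2$-page is concentrated in even bidegrees'') is not literally correct, since the $E_2$-page of this spectral sequence is $H^*(BSO(n))\otimes H^*(\tilde{Gr}_j(\R^n))$ and you do not yet know the fibre cohomology is even-graded. The collapse in Cases~1--3 genuinely rests on the regular-sequence hypothesis you already state (this is precisely Borel's theorem), not on an a~priori parity observation at $E_2$.
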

\begin{proof}
The proof of this proposition follows from the Serre spectral sequence associated to the fibration 
\begin{myeq}\label{F2}
 \tilde{Gr}_j(\R^{n}) \to BSO(j)\times BSO(n-j)\to BSO(n). 
\end{myeq}
This may be computed via the techniques of \cite{Bor53}. For example all the cases other than the last one follow from \cite[Theorem 26.1]{Bor53}. In the last case, one observes that in the spectral sequence for \eqref{F2}, the Euler class $e_n$ pulls back to $0$ in $BSO(j)\times BSO(n-j)$. The class $\sigma_{n-1}$ transgresses to this element. The rest of the spectral sequence works analogously as the spectral sequence for
\[ Gr_{\frac{j-1}{2}}(\C^{\frac{n}{2}}) \to BU(\frac{j-1}{2})\times BU(\frac{n-j+1}{2}) \to BU(\frac{n}{2}). \]
\end{proof}

 Proposition \ref{P2} directly yields the following corollary 
\begin{cor}\label{C1}
The cohomology of $\tilde{Gr}_j(\R^n)$ for $j>1$ is concentrated in even degrees unless $n$ is even and $j$ is odd.
\end{cor}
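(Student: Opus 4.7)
The plan is simply to verify the claim by inspecting each of the four cases produced by Proposition \ref{P2}, using the fact that the graded algebra $H'_{n,k}$ lives by construction in even degrees only. Since $H^\ast(\tilde{Gr}_j(\R^n))$ is a quotient of a polynomial algebra on $H'_{n,k}$ together with one or two extra generators, it suffices to check that those extra generators each have even degree in the first three cases and that the one exceptional generator has odd degree in the last case.

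Concretely, I would proceed as follows. In the case $n,j$ both odd, the only extra generator is $e_{n-j}$, and $n-j$ is even, so every monomial in the presentation lies in even degree. In the case $n$ odd, $j$ even, the extra generator is $e_j$ with $\deg(e_j)=j$ even. In the case $n,j$ both even, both extra generators $e_j$ and $e_{n-j}$ are in even degrees. In each of these three cases the algebra is therefore concentrated in even degrees. In the remaining case $n$ even and $j$ odd, the extra generator is $\sigma_{n-1}$ of degree $n-1$, which is odd, so this case does contribute to odd-degree cohomology, consistent with the statement.

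There is essentially no obstacle here: the work has already been done in Proposition \ref{P2}, and the corollary is just a parity bookkeeping of the degrees of the generators listed there. The only mild point worth emphasizing is the role of $H'_{n,k}$: by definition it is $H_{n,k}$ with degrees doubled, so every class coming from $H'_{n,k}$ sits in an even degree, which is what reduces the question to a check on the Euler-class generators $e_j, e_{n-j}, \sigma_{n-1}$.
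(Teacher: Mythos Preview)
Your proposal is correct and follows exactly the approach the paper intends: the paper simply states that the corollary is a direct consequence of Proposition~\ref{P2}, and you have spelled out the case-by-case parity check on the extra generators $e_{n-j}$, $e_j$, $\sigma_{n-1}$ that makes this immediate.
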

Now we turn our attention to $F_j^{SO}$ and consider the real flag manifold $F_{j,r}^{SO}$ defined as the homogeneous space $F_{j,r}^{SO}=SO(jp)/{SO(j)^r\times SO((p-j)r)}$. 
As in the unitary case, we consider the fibration 
 \begin{myeq}\label{flagsor}
F_{j,r}^{SO}\to B(SO(j)^r\times SO(p-r)j) \xrightarrow{p} BSO(jp).
\end{myeq}
The map 
\[
BSO(j)^r \times BSO((p-r)j) \to BSO(pj) 
\]
classifies the bundle $\oplus_{i=1}^r \pi_i^\ast\gamma^j_{SO} \oplus \pi_0^\ast \gamma^{(p-r)j}_{SO}$, where 
\[
\pi_i : BSO(j)^r \times BSO((p-r)j) \to BSO(j) ~\mbox{ (}i^{th}\mbox{ factor)},\,
\]
\[
 \pi_0:BSO(j)^r \times BSO((p-r)j) \to BSO((p-r)j).
\]   
We now have the formula $x_i=p^\ast(p_i)$ for $1\leq i \leq \floor{jp/2}$, where the $x_i$ are defined by 
\[
1+x_1+\cdots + x_{\floor{pj/2}}= [\prod_{i=1}^r (1+p_{1,i}+\cdots + p_{\floor{j/2},i})] (1+p_{1,0}+\cdots + p_{\floor{(p-r)j/2},0}). 
\]
Here the notation is defined as 
\[
p_{i,l}=\pi_l^\ast(p_i)~\mbox{ for } 1\leq i \leq \floor{j/2} , ~ p_{i,0}=\pi_0^\ast(p_i) ~\mbox{ for } 1\leq i \leq \floor{(p-r)j/2}.
\] 
If $j$ is even, we also have Euler classes $e_{j,l}=\pi_l^\ast(e_j)$ for $1\leq l \leq r$. In this case $(p-r)j$ is also even, and so we also have the Euler class $e_{(p-r)j,0}=\pi_0^\ast e_{(p-r)j}$. We have the formula,  
\[ p^\ast(e_{jp})=\prod_{l=1}^r e_{j,l} \cdot e_{(p-r)j,0}.\]
If $j$ is odd, and $r$ is also odd, then $(p-r)j$ is even, and we have the element $e_{(p-r)j,0}=\pi_0^\ast e_{(p-r)j}$ in the cohomology ring of $BSO(j)^r \times BSO((p-r)j)$. As in Theorem \ref{T2}, we use the same notation to denote the image in the cohomology of the flag manifold $F_{j,r}^{SO}$. 
\begin{theorem}\label{TSO}
The cohomology of $F_{j,r}^{SO}$ for $j>1$ is concentrated in even degrees, and we have in terms of the notation above, 
\[
H^*(F_{j,r}^{SO}) \cong 
\begin{cases}
 \frac{\Z/p[p_{i,l}]}{( x_1,\cdots, x_{\floor{pj/2}})} &\text{if $r$ is even, $j$ is odd,}\\ \\
 \frac{\Z/p[p_{i,l},e_{(p-r)j,0}]}{( x_1,\cdots, x_{\floor{pj/2}},e_{(p-r)j,0}^2-p_{\frac{(p-r)j}{2},0})} &\text{if $r$ is odd, $j$ is odd,}\\ \\
\frac{ \Z/p[p_{i,l},e_{j,l},e_{(p-r)j,0}]}{( x_1,\cdots, x_{\floor{pj/2}}, p^\ast e_{jp},e_{j,l}^2-p_{j,l}, e_{(p-r+1)j,0}^2-p_{\frac{(p-r)j}{2},0})} &\text{if $j$ is even.}
\end{cases}
\]
\end{theorem}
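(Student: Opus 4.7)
The plan is to mirror the strategy of Theorem~\ref{T2}: first establish that $H^*(F_{j,r}^{SO})$ is concentrated in even degrees (the SO analogue of Proposition~\ref{P1}), then extract the ring presentation from the collapse of the Serre spectral sequence of \eqref{flagsor}. The main novelties relative to the unitary case are the possible presence of an odd-degree class in the oriented Grassmannian fiber (when $n$ is even and $j$ odd, per Proposition~\ref{P2}) and the Euler classes in $H^*(BSO)$ appearing both in the middle space and in the base.

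\textbf{Step 1 (even-degree concentration).} I would induct on $r$, simultaneously proving the ring formula, using the forgetful fibration
\[
\tilde{Gr}_j(\R^{(p-r+1)j}) \to F_{j,r}^{SO} \to F_{j,r-1}^{SO}.
\]
By Corollary~\ref{C1}, the fiber has even-degree cohomology except when $j$ is odd and $r$ is even, in which case a single odd class $\sigma_{n-1}$ (with $n=(p-r+1)j$) is present. In the good cases, the $E_2$-page lies in even bidegrees for parity reasons, so the Serre spectral sequence collapses and the conclusion is immediate. In the exceptional case, the fibration is the pullback of the universal Grassmannian fibration $\tilde{Gr}_j(\R^n)\to BSO(j)\times BSO(n-j)\to BSO(n)$ along the classifying map $F_{j,r-1}^{SO}\to BSO(n)$ of the orthogonal complement bundle $\xi$; since $\sigma_{n-1}$ transgresses to $e_n$ in the universal setting, naturality of transgression yields $d_n(\sigma_{n-1}) = e(\xi) = e_{(p-r+1)j,0}$, which by the inductive hypothesis (Case~2 of the theorem applied to $r-1$ odd, $j$ odd) is an independent generator of $H^*(F_{j,r-1}^{SO})$. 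A module-theoretic check using the explicit inductive presentation and the Leibniz rule then shows this differential kills all surviving odd classes on subsequent pages.

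\textbf{Step 2 (collapse and ring structure).} Once even-degree concentration is in hand, apply the Serre spectral sequence of \eqref{flagsor}. Both $H^*(BSO(jp))$ and $H^*(F_{j,r}^{SO})$ now lie in even degrees, so the sequence collapses at $E_2$ and convergence identifies $H^*(F_{j,r}^{SO})$ with the quotient of the known polynomial ring $H^*(B(SO(j)^r\times SO((p-r)j)))$ by the ideal generated by the positive-degree images of the generators of $H^*(BSO(jp))$. The Pontrjagin generators pull back to the $x_k$, and when $jp$ is even (i.e.\ $j$ even) the Euler class $e_{jp}$ pulls back to $\prod_{l=1}^r e_{j,l}\cdot e_{(p-r)j,0}$ by Whitney multiplicativity of the Euler class. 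Splitting into the three cases of the theorem according to the parities of $j$ and $(p-r)j$---which govern which factors carry an Euler class in their cohomology, and thereby which $e^2 = p_{n/2}$ identifications must be added as relations when writing the ring with the $p_{n/2,\cdot}$ as separate generators---produces the three presentations in the statement.

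\textbf{Main obstacle.} The delicate point is the induction step in the case $j$ odd, $r$ even. Because $e_{(p-r+1)j,0}$ can itself be a zero-divisor in $H^*(F_{j,r-1}^{SO})$ (its square equals a Pontrjagin class, so it annihilates various nilpotent combinations), it is not automatic that $d_n$ kills every odd-degree class in $E_n^{*,n-1+*}$. One must combine the Leibniz rule with the precise presentation of $H^*(F_{j,r-1}^{SO})$ given by the inductive hypothesis to check that the image of $d_n$ captures the full odd-degree part; this is exactly the cohomological mechanism by which the Euler class $e_{(p-r+1)j,0}$ ceases to appear in the final ring in Case~1.
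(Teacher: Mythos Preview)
Your proposal is correct and follows essentially the same route as the paper: induct on $r$ via the forgetful fibration, separate the case $j$ odd and $r$ even where the odd class $\sigma_{(p-r+1)j-1}$ appears, compute its transgression to $e_{(p-r+1)j,0}$ by naturality from the universal situation, and then read off the ring presentation from the collapse of the spectral sequence of \eqref{flagsor}. The paper computes the transgression slightly differently (passing through the frame bundle rather than the pullback of the Grassmannian fibration directly) and is terser about the module-theoretic check you flag as the main obstacle, but the architecture is the same.
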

\begin{proof}
We  proceed by induction on $r$. For $r=1$ this is just the oriented real Grassmannian manifold $\tilde{Gr}_j(\R^{jp})$, and the result is implied by Proposition \ref{P2} and Corollary \ref{C1}. In the induction step, we compute via the Serre spectral sequence associated to the fibration 
\begin{myeq}\label{F4}
\tilde{Gr}_j(\R^{(p-r+1)j})\to F_{j,r}^{SO}\to F_{j,r-1}^{SO},
\end{myeq} 
assuming the expression for $F_{j,r-1}^{SO}$. We have 2 cases to consider. \\

\noindent
\textbf{Case I:} Either $j$ is even or $r$ is odd.\\
The assumptions on $r$ and $j$ imply using Corollary \ref{C1} that $H^\ast(\tilde{Gr}_j(\R^{(p-r+1)j})$ is concentrated in even degrees. This implies via induction that the cohomology of $F_{j,r}^{SO}$ is concentrated in even degrees. Now we compute the Serre spectral sequence for the fibration \eqref{flagsor}, which degenerates at the $E_2$-page due to degree reasons. This implies that $H^\ast(F_{j,r}^{SO})$ is the quotient of $H^\ast(BSO(j)^r\times BSO((p-r)j))$ by the ideal generated by $p^\ast$ applied to the positive degree classes of $H^\ast(BSO(pj))$. The result now follows. \\

\noindent
\textbf{Case II:} $j$ is odd and $r$ is even.\\
In this case there is an odd degree class $\sigma_{(p-r+1)j-1}$ in $H^*(\tilde{Gr}_j(\R^{(p-r+1)j}))$. We show that this class does not survive the spectral sequence for \eqref{F4}. Inductively, we have
 \[H^*(F_{j,r-1}^{SO})\cong  \frac{\Z/p[p_{i,l},e_{(p-r+1)j,0}]}{( x_1,\cdots, x_{\floor{pj/2}},e_{(p-r+1)j,0}^2-p_{\frac{(p-r)j}{2},0})} .\]
We prove that $\sigma_{(p-r+1)j-1}$ transgresses to the class $e_{(p-r+1)j,0}$. This implies that the odd degree classes in the spectral sequence for \eqref{F4} support a differential, and do not survive to $E_\infty$. The rest follows as in the previous case via the fibration \eqref{flagsor}. 

Consider the vector bundle over $F_{j,r-1}^{SO}$ classified by the map 
$$\xi_{(p-r+1)j}: F_{j,r-1}^{SO}\to BSO((p-r+1)j)$$
 whose fibres are $(\oplus_1^{r-1}W_i)^{\perp}$. The quotient map 
$$q: SO((p-r+1)j)\to \tilde{Gr}_j(\R^{(p-r+1)j})$$ 
induces the following map between the fibrations.
\[
\xymatrix{
SO((p-r+1)j)\ar[rr]\ar[d] & & \tilde{Gr}_j(\R^{(p-r+1)j}) \ar[d]\\
Fr(\xi_{(p-r+1)j}) \ar[rr] \ar[d] & & \tilde{Gr}_j(\xi_{(p-r+1)j})\simeq F_{j,r}^{SO} \ar[d]\\
F_{j,r-1}^{SO} \ar@{=}[rr] & & F_{j,r-1}^{SO}
}
\]
Here $Fr(\xi_{(p-r+1)j})$ denotes the oriented frame bundle of $\xi_{(p-r+1)j}$. The differentials for the Serre spectral sequence of the right hand fibration are computed via the left hand fibration. The differentials for left hand fibration are computed via pulling back the universal bundle by the following commutative diagram.
\[
\xymatrix{
SO((p-r+1)j)\ar@{=}[rr]\ar[d] & & SO((p-r+1)j)\ar[d]\\
Fr^*(\xi_{(p-r+1)j}) \ar[rr] \ar[d] & & ESO((p-r+1)j) \ar[d]\\
F_{j,r-1}^{SO} \ar[rr] & & BSO((p-r+1)j)
}
\]
We now easily compute that 
 \[d_i(\sigma_{(p-r+1)j-1})=0 ~~\mbox{ if } i< (p-r+1)j, \mbox{ and}\]
\[d_{(p-r+1)j}(\sigma_{(p-r+1)j-1})=e_{(p-r+1)j,0}.\]
%
%
%
%
\end{proof}

\section{Cohomology of the wreath power of spaces} \label{wrpowsp}
In this section, we reduce the computation of the index of the flag manifold to the calculation of characteristic classes via the wreath power construction. These characteristic classes are computed in \S \ref{wreathcharclass}. The $p$-fold wreath power construction is an odd primary analogue of the wreath squares of \cite{BBKV18}.

For a CW-complex $X$, the cyclic group $C_p$ of order $p$ acts on $X^p$ by 
\[\sigma\cdot(x_1,\cdots,x_p)=(x_p,x_1,\cdots,x_{p-1})\] 
where $\sigma$ is the generator of $C_p$. The $p$-th wreath power of $X$ is the Borel construction on $X^p$ denoted by $X^p_{hC_p} $, and we have the usual fibre bundle 
\begin{myeq}\label{F6}
X^p\to X^p_{hC_p}\to BC_p.
\end{myeq}
Recall that the cohomology of $BC_p$ has the form 
\begin{myeq}\label{cohcp}
H^\ast(BC_p)\cong \Z/p[u,v]/(u^2), ~~ |u|=1,	~ |v|=2.
\end{myeq}
The $E_2$-term of the Serre spectral sequence of the fibration \eqref{F6} is given by 
\[E_{2}^{i,j}:=H^i(C_p;\mathcal{H}^j(X^p)).\]
Here, the local coefficient system is determined by the action of $\pi_1(BC_p)=C_p = \langle \sigma \rangle$ on $H^*(X^p)\cong H^*(X)\otimes\cdots\otimes H^*(X)$, which is given by 
\[\sigma\cdot(x_1\otimes\cdots\otimes x_p)=(x_p\otimes x_1\otimes \cdots\otimes x_{p-1}).\] 
Finally the $E_2$-term of the Serre spectral sequence associated to the fibration \eqref{F6} is given by \cite[Corollary IV.1.6]{AM04}
\begin{myeq}\label{E1}
E_2^{i,j}=
\begin{cases}
{H^j(X^{p})}^{C_p}& \quad \text{if $i=0$},\\
H^{r}(X)& \quad \text{if $i>0$ and $j=pr$},\\
0 & \quad \text{otherwise}.
\end{cases}
\end{myeq}
\begin{prop}\cite[Theorem IV.1.7]{AM04}\label{P3}
 The Serre spectral sequence for the fibration \eqref{F6} degenerates at $E_2$-page.
\end{prop}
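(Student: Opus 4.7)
The plan is to exploit the multiplicativity of the Serre spectral sequence for \eqref{F6} together with the $\Z/p[C_p]$-module structure on $H^*(X^p)$ to exhibit enough permanent cycles to account for the whole $E_2$-page. By the Leibniz rule it suffices to produce lifts of a generating set of classes. The generators $u, v \in H^*(BC_p)$ are pulled back from the base and are therefore permanent cycles for free, so the task reduces to lifting a generating set for the bottom row $E_2^{0,*} = H^*(X^p)^{C_p}$ to classes in $H^*(X^p_{hC_p})$.

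First I would decompose $H^n(X^p) = \bigoplus_{n_1+\cdots+n_p = n} H^{n_1}(X) \otimes \cdots \otimes H^{n_p}(X)$ as a $\Z/p[C_p]$-module. Since $p$ is prime, every non-constant multi-index $(n_1,\ldots,n_p)$ lies in a free $C_p$-orbit of size exactly $p$ and contributes an induced summand; the constant multi-indices $(r,\ldots,r)$ force $n=pr$ and contribute $H^r(X)^{\otimes p}$, which splits further into trivial summands generated by the diagonals $\alpha^{\otimes p}$ and induced summands coming from off-diagonal basis tensors. Because induced $\Z/p[C_p]$-modules have vanishing cohomology in positive degrees, two conclusions follow: (i) $H^*(X^p)^{C_p}$ is spanned by diagonal classes $\alpha^{\otimes p}$ together with norm classes $N(\beta)$ for $\beta \in H^*(X^p)$; and (ii) the upper rows $E_2^{i,pr}$ for $i>0$ described in \eqref{E1} are generated as an $H^*(BC_p)$-module by products of $u$ or $v$ with the diagonal classes $\alpha^{\otimes p}$. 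It is therefore enough to lift the norm classes and the diagonal classes.

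The norm classes are handled by a transfer argument: the $p$-fold covering $EC_p \times X^p \to X^p_{hC_p}$ has an associated transfer $\mathrm{tr}: H^*(EC_p \times X^p) \cong H^*(X^p) \to H^*(X^p_{hC_p})$ whose composition with restriction to the fiber recovers the norm map $N$. Hence every element in the image of $N$ admits an explicit preimage in $H^*(X^p_{hC_p})$ and is a permanent cycle.

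The main obstacle is lifting the diagonal classes $\alpha^{\otimes p}$, and for these I would invoke Steenrod's power operation construction together with naturality. Writing $\alpha \in H^r(X;\Z/p)$ as $\phi^*\iota_r$ via a classifying map $\phi: X \to K(\Z/p, r)$ that sends $\iota_r$ to $\alpha$, the $p$-th cartesian power $\phi^p$ is $C_p$-equivariant and induces a map of Borel constructions $X^p_{hC_p} \to K(\Z/p, r)^p_{hC_p}$ over $BC_p$ that on fibers carries $\iota_r^{\otimes p}$ to $\alpha^{\otimes p}$. Steenrod's construction supplies a canonical total power class $P(\iota_r) \in H^{pr}(K(\Z/p, r)^p_{hC_p})$ restricting to $\iota_r^{\otimes p}$ on the fiber; pulling $P(\iota_r)$ back along this map of Borel constructions furnishes the desired lift of $\alpha^{\otimes p}$. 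With both the norms and the diagonals realized as restrictions of genuine cohomology classes on $X^p_{hC_p}$, no room remains for nontrivial differentials, and the spectral sequence collapses at $E_2$.
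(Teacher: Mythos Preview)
The paper does not supply its own proof of this proposition; it simply quotes \cite[Theorem IV.1.7]{AM04}. Your argument is correct and is essentially the classical one (going back to Steenrod and Nakaoka, and reproduced in Adem--Milgram): decompose $H^*(X^p)$ as a $\Z/p[C_p]$-module into free summands plus the trivial summands spanned by the diagonal tensors, lift the norm classes via the transfer of the $p$-fold cover $EC_p\times X^p\to X^p_{hC_p}$, and lift the diagonal classes $\alpha^{\otimes p}$ via Steenrod's external power construction. Once the edge $E_2^{0,*}$ consists of permanent cycles, multiplicativity together with the fact that $E_2^{>0,*}$ is generated over $H^*(BC_p)$ by these same diagonals forces all differentials to vanish, exactly as you say.

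Two small remarks. First, your phrase ``every non-constant multi-index lies in a free $C_p$-orbit'' is slightly imprecise---what matters is that every \emph{non-diagonal basis tensor} lies in a free orbit, which you do clarify in the next clause; you might tighten the wording. Second, you are implicitly using that for $p$ odd the Koszul sign in $\sigma\cdot\alpha^{\otimes p}$ is $(-1)^{|\alpha|^2(p-1)}=+1$, so the diagonals really are $C_p$-fixed; this is automatic here since the paper fixes $p$ odd, but it is worth noting since the statement is false for $p=2$ with integer coefficients.
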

We introduce the following maps in order to give a description of the $E_2$-page $=E_{\infty}$-page.
\begin{myeq}\label{E2}
\begin{aligned}
&P:H^r(X)\to H^{pr}(X^p)\cong E_{\infty}^{0,pr} ,  \mbox{ given by } x\mapsto x^{\otimes p}, & \\
&I:H^j(X^p)\to {H^j(X^p)}^{C_p},  \mbox{ given by } x_1\otimes\cdots\otimes x_p \mapsto \sum_{g\in C_p}g \cdot (x_1\otimes\cdots\otimes x_p). & 
\end{aligned}
\end{myeq}
Note that the map $P$ is multiplicative, and $I$ is additive. Now we may write the $E_{\infty}$-page with the help of the maps $P$ and $I$, and one has the following identifications
\begin{myeq}\label{Einfty}
\begin{aligned}
& E_{\infty}^{i,0}\cong H^i(BC_p)\cong \Z/p[u,v]/( u^2),\\
& E_{\infty}^{0,j}\cong {H^j(X^p)}^{C_p},\\
& E_{\infty}^{0,pr} \cong P(H^{r}(X))\oplus I(H^{pr}(X^p)),\\
& E_{\infty}^{i,pr} \cong P(H^{r}(X)) \otimes H^i(BC_p),\\
& I(H^j(X^p))\cdot u=0, \, I(H^j(X^p))\cdot v=0. 
\end{aligned} 
\end{myeq}
 These relations describe the complete ring structure of $E_{\infty}$-page. There are no  multiplicative extension problems from  \cite[Remark after Theorem 2.1]{Le97}. We summarize this in the following result. 
\begin{thm} \label{cohwrpow}
The cohomology of $X^p_{hC_p}$ is generated over $H^\ast(BC_p)$ by $P(H^r(X))$ and $I(H^r(X^p))$ modulo of the ideal generated by terms $I(y)\cdot v$, $I(y)\cdot u$ for $y \in H^r(X^p)$. 
\end{thm}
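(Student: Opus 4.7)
The plan is to assemble the statement directly from the structural results already established: the $E_2$-identification \eqref{E1}, the degeneration result Proposition \ref{P3}, the ring identifications listed in \eqref{Einfty}, and the absence of multiplicative extensions from \cite{Le97}.

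First, I would invoke Proposition \ref{P3} to reduce the computation of $H^*(X^p_{hC_p})$ to understanding the $E_\infty=E_2$ page as a filtered ring. The identification \eqref{E1} combined with \eqref{Einfty} shows that additively $E_\infty$ is generated, as an $H^*(BC_p)$-module, by the classes $P(x)$ for $x \in H^*(X)$ and $I(y)$ for $y \in H^*(X^p)$. Indeed, the bottom row $E_\infty^{i,0}$ contributes $H^*(BC_p)$ itself; the fiber line splits as $E_\infty^{0,pr} \cong P(H^r(X))\oplus I(H^{pr}(X^p))$ in degrees divisible by $p$, and as $I(H^j(X^p))$ otherwise; and the remaining nonzero bidegrees $E_\infty^{i,pr}$ with $i>0$ are recovered as products $P(H^r(X)) \otimes H^i(BC_p)$, i.e.\ as diagonal classes times monomials in $u,v$.

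Next I would verify the relations $I(y)\cdot u=0$ and $I(y)\cdot v=0$ already recorded at the $E_\infty$-level in \eqref{Einfty}. The class $I(y)$ lies on the fiber line $E_\infty^{0,*}$, so its products with $u$ and $v$ a priori land in $E_\infty^{1,*}$ and $E_\infty^{2,*}$ respectively. For degrees $j$ not of the form $pr$ the target vanishes by \eqref{E1}, so the relation is automatic. For $j=pr$ the target is the summand $P(H^r(X))\otimes H^i(BC_p)$; since $I$ is a transfer from $H^*(X^p)$ while $u,v$ restrict trivially along the fiber inclusion $X^p\hookrightarrow X^p_{hC_p}$, the projection formula $\tau(z)\cdot w=\tau(z\cdot i^*w)$ forces $I(y)\cdot u$ and $I(y)\cdot v$ to lie in the image of $I$, which is complementary to $P(H^r(X))$ in the relevant bidegree. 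Hence both products vanish.

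Finally I would invoke the extension analysis cited after \cite[Theorem 2.1]{Le97} to conclude that no multiplicative extensions arise in passing from the associated graded $E_\infty$ to $H^*(X^p_{hC_p})$. Combined with the inherited relation $u^2=0$ from \eqref{cohcp}, this yields precisely the presentation asserted: generators $P(x), I(y), u, v$ modulo the ideal $(u^2,\, I(y)\cdot u,\, I(y)\cdot v)$.

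The main obstacle, in my view, is bookkeeping rather than conceptual: one must justify the relations $I(y)\cdot u=0$ and $I(y)\cdot v=0$ at the level of $H^*(X^p_{hC_p})$ itself and not only on the $E_\infty$-page, which is exactly the input provided by \cite{Le97}. Modulo that reference, the theorem is a direct consolidation of \eqref{E1}, Proposition \ref{P3}, and \eqref{Einfty}.
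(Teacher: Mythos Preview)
Your proposal is correct and follows exactly the route the paper takes: the paper does not give a separate proof of Theorem~\ref{cohwrpow} at all, but states it as a summary of the preceding material (\eqref{E1}, Proposition~\ref{P3}, \eqref{Einfty}, and the citation to \cite{Le97}). Your write-up is in fact more detailed than the paper's, which simply records the relations $I(y)\cdot u=0$, $I(y)\cdot v=0$ in \eqref{Einfty} without the transfer/projection-formula justification you sketch.
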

The following proposition follows from the description of the $E_\infty$-page above. 
\begin{prop}\label{P7}
If $f:B_1\to B_2$ induces an injective map $f^*:H^*(B_2)\to H^*(B_1)$, then 
\[(f^p_{hC_p})^*:H^*({B_2}^p_{hC_p})\to H^*({B_1}^p_{hC_p})\] 
is injective. 
\end{prop}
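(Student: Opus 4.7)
My plan is to use the naturality of the Serre spectral sequence of the Borel fibration $X^p \to X^p_{hC_p} \to BC_p$ together with its collapse at $E_2$. The map $f:B_1\to B_2$ induces a $C_p$-equivariant map $f^p : B_1^p \to B_2^p$ covering the identity on $BC_p$, hence a morphism of Serre spectral sequences $E_r^{*,*}(B_2) \to E_r^{*,*}(B_1)$ converging to $(f^p_{hC_p})^*$. By Proposition \ref{P3} both sides collapse at $E_2$, so it suffices to show that the induced map is injective on $E_\infty = E_2$ in every bidegree.

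Using the description of $E_2$ from \eqref{E1}, I would check injectivity in two cases. For $i = 0$, the induced map is the restriction of $(f^p)^* = (f^*)^{\otimes p} : H^*(B_2^p) \to H^*(B_1^p)$ to the $C_p$-invariants; since we work over the field $\Z/p$, the Künneth isomorphism gives $H^*(B^p)\cong H^*(B)^{\otimes p}$ and a $p$-fold tensor power of an injective linear map is injective, so its restriction to the invariant subspace is also injective. For $i>0$ and $j = pr$, the identification $E_2^{i,pr}\cong H^r(X)\otimes H^i(BC_p)$ in \eqref{Einfty} is implemented via the power map $P$ of \eqref{E2}, which is manifestly natural: for $y \in H^r(B_2)$ one has $(f^p)^*(P(y)) = (f^*(y))^{\otimes p} = P(f^*(y))$. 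Hence on this column the induced map becomes $f^*\otimes \mathrm{id}$, which is injective by hypothesis.

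Once injectivity is established on $E_\infty$, the statement for cohomology follows from the standard fact that a morphism of filtered vector spaces over a field whose associated graded is injective is itself injective, using that the Serre filtration on $H^n({X}^p_{hC_p})$ is finite in each degree. The only delicate point, and the closest thing to an obstacle, is keeping track of the identifications \eqref{E1} and \eqref{Einfty} in the second case; but once one observes that $P$ is natural and the norm-type identification $E_2^{i,pr}\cong H^r(X)\otimes H^i(BC_p)$ for $i>0$ is induced by $P$, the map on $E_2$ is forced to coincide with $f^* \otimes \mathrm{id}$, and no further computation is required.
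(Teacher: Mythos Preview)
Your proposal is correct and follows essentially the same approach as the paper: compare the two Serre spectral sequences for the Borel fibrations, use the collapse at $E_2$ from Proposition \ref{P3}, and check injectivity on the $E_2$-page via the explicit description \eqref{E1}. The paper's own proof is quite terse and simply asserts that ``the description of the $E_2$-page shows that the map is injective on the $E_2$-page''; you have supplied exactly the details behind that assertion (K\"unneth, tensor powers of injections, naturality of $P$) as well as the passage from injectivity on $E_\infty$ to injectivity on cohomology, which the paper leaves implicit.
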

\begin{proof}
We have the following commutative diagram of fibrations
\[
\xymatrix{
B_1^p \ar[d] \ar[rr] & & B_2^p \ar[d] \\
{B_1}^p_{hC_p}  \ar[rr]^{f^p_{h C_p}}\ar[d] & & {B_2}^p_{h C_p}\ar[d]\\
BC_p\ar@{=}[rr] & & BC_p.
}
\] 
The description of the $E_2$-page of the Serre spectral sequence of both the fibrations are given by \eqref{E1}. The map between the two fibrations induces a map between the corresponding spectral sequences, and from Proposition \ref{P3}, it follows that $E_2=E_{\infty}$, and the spectral sequence converges to the cohomology of $p$-th wreath power of corresponding $B_i$. The description of the $E_2$-page shows that the map is injective on the $E_2$-page, and as the spectral sequence degenerates at $E_2$, this is also injective on the $E_\infty$-page.  
\end{proof}

\begin{mysubsection}{The wreath power of $BU(n)$ and $BSO(n)$}
Let $G$ denote one of $U$ or $SO$. Using the action of  $C_p$  on $G(n)^p$ via the cyclic permutation of coordinates,  we define 
\[W_n^G :=G(n)^p\rtimes C_p,\]
 the semidirect product induced by the action of $C_p$ on $G(n)^p$. That is, the notation $W_n^G$ refers to  $W_n^U= U(n)^p \rtimes C_p$ if $G=U$, and $W_n^{SO}= SO(n)^p \rtimes C_p$ if $G=SO$. 
This gives us an exact sequence of groups 
\begin{myeq}\label{F7}
0\to G(n)^p \to W_n^G \to C_p\to 0.
\end{myeq} 
The exact sequence of groups induces the fiber bundle 
\begin{myeq}\label{F8}
B(G(n)^p)\to BW_n^G \xrightarrow{\pi} BC_p.    
\end{myeq}
 The Serre spectral sequence associated to \eqref{F8} is the Lyndon-Hochschild-Serre spectral sequence associated to \eqref{F7}. We also observe that \eqref{F8} identifies $BW_n^G$ as the wreath power of $BG(n)$. The associated spectral sequence has all differentials $0$ from the second page onwards (Proposition \ref{P3}). Therefore,
\begin{prop}\label{P4}
$\pi^*:H^*(BC_p;\Z/p)\to H^*(BW_n^G;\Z/p)$ is injective.
\end{prop}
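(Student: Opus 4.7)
The plan is to exploit the fact that the fibration \eqref{F8} comes from a split short exact sequence of groups. Since $W_n^G = G(n)^p \rtimes C_p$ is by definition a semidirect product, the surjection $W_n^G \twoheadrightarrow C_p$ admits the canonical section $s: C_p \hookrightarrow W_n^G$ sending $\sigma$ to $(1, \sigma)$. Applying the classifying space functor yields a map $Bs: BC_p \to BW_n^G$ with $\pi \circ Bs \simeq \mathrm{id}_{BC_p}$. Passing to mod $p$ cohomology, the composition $(Bs)^* \circ \pi^*$ is the identity on $H^*(BC_p)$, so $\pi^*$ is split injective, in particular injective.

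An alternative (and for what follows perhaps more instructive) argument reads off the conclusion from Proposition \ref{P3} directly. The map $\pi^*$ is the edge homomorphism of the Serre spectral sequence of \eqref{F8}, factoring as
\[H^*(BC_p) \;=\; E_2^{*,0} \;\twoheadrightarrow\; E_\infty^{*,0} \;\hookrightarrow\; H^*(BW_n^G).\]
The leftmost equality uses that the local system $\HH^0(B(G(n)^p))$ is trivial because $B(G(n)^p)$ is connected; the middle surjection is an isomorphism because the spectral sequence degenerates at $E_2$ by Proposition \ref{P3}, so in particular no nontrivial differentials target the bottom row; and the final map is the inclusion of the deepest filtration layer $F^p H^p(BW_n^G)$. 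This again forces $\pi^*$ to be injective.

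There is no real obstacle here: once one notices that \eqref{F7} splits, everything follows formally. The only thing to check is that the section produced at the group level induces a section of $\pi$ after applying $B$, which is a standard functoriality property of the classifying space construction applied to a split extension. The spectral sequence argument provides independent confirmation and aligns with the viewpoint used in the subsequent sections to compute characteristic classes.
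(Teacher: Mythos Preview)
Your proposal is correct. The spectral-sequence argument you give second is precisely the paper's own proof: the fibration \eqref{F8} is identified with the wreath power fibration for $BG(n)$, Proposition \ref{P3} gives degeneration at $E_2$, and hence the edge homomorphism $\pi^*$ is injective.

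Your first argument, via the splitting $s:C_p\hookrightarrow W_n^G$ of the semidirect product, is a genuinely different and more elementary route: it bypasses Proposition \ref{P3} entirely and needs only functoriality of $B$. The paper does not take this shortcut, presumably because the spectral-sequence description of $H^*(BW_n^G)$ (the full $E_\infty$-page, not just its bottom row) is what is actually used in the subsequent index computations. So the section argument is cleaner for the bare injectivity statement, while the paper's approach sets up the machinery needed later.
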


\vspace{0.5cm}
\end{mysubsection}

\begin{mysubsection}{The index of the flag manifold} 
We  now describe an useful reduction for the index of flag manifold. The techniques are analogous to \cite{BBKV18}. We again let $G$ be one of $U$ or $SO$, and the cyclic group $C_p$ acts on the complex flag manifold $F_n^G$ by  cyclically permuting the orthogonal subspaces 
\[\sigma\cdot (W_1,\cdots, W_p)= (W_p,W_1,\cdots, W_{p-1}). \] 
%
The group $W_n^G = G(n)^p \rtimes C_p$ embeds in $G(np)$ via the maps 
\[
(A_1,\cdots,A_p)\in G(n)^p  \mapsto \begin{pmatrix}
A_1 & 0 &\cdots & 0\\
0 & A_2 &\cdots & 0\\
\cdot &\cdot &\cdot &\cdot\\
0 & 0 &\cdots & A_p
\end{pmatrix},
\sigma \mapsto \begin{pmatrix}
0 & 0 &\cdots & I\\
0 & \cdots &I & 0\\
\cdot &\cdot &\cdot &\cdot\\
I & 0 &\cdots & 0
\end{pmatrix}.\]
This leads to the homeomorphism 
 \[
F_n^G/C_p\cong \Big[G(pn)/G(n)^p\Big]/C_p \cong G(pn)/(G(n)^p\rtimes C_p) \cong G(pn)/W_n^G.\]
 Since the action of $W$ on $U(jp)$ is free we can further assert that
\begin{myeq}\label{heF}
{F_n^G}_{hC_p}=EC_p\times_{C_p}F_n^G \simeq F_n^G/C_p \cong G(pn)/W_n^G\simeq  EG(pn)\times_{W_n^G}G(pn).
\end{myeq} 
Thus we have the following diagram whose columns are fibrations
\begin{myeq}\label{CD1}
	\xymatrix{
    F_n^G  \ar[d] & G(pn) \ar@{=}[rr] \ar[l] \ar[d]  && G(pn) \ar[d] \\
	(F_n^G)_{hC_p}\ar[d]&EG(pn)\times_{W_n^G} G(pn)\ar[rr]\ar[d]_{p_1}\ar[l]^-{\simeq} &  & EG(pn)\ar[d] \\
	BC_p& BW_n^G \ar[rr]^{i_B}\ar[l]_{\pi} &  & BG(pn).
	}
\end{myeq}
The homotopy equivalence \eqref{heF} induces the left part of the commutative diagram. The right part of the diagram is induced by $W_n^G \xhookrightarrow{} G(pn)$, which gives the map $i_B$ on classifying spaces, and then $p_1$ is the pullback of the universal $G(pn)$-fibration to $BW_n^G$ via $i_B$. Recall that the index of $F_n^G$ is given by 
\[ 
\Index_{C_p}(F_n^G)= \Ker(H^\ast(BC_p) \to H^\ast({F_n^G}_{hC_p}).
\]
Applying Proposition \ref{P4}, we see that $\pi^\ast$ is injective, and via the diagram \eqref{CD1}, we have,
\[ 
\Index_{C_p}(F_n^G)= \Ker(p_1^\ast \circ \pi^\ast) = (\pi^\ast)^{-1}\Ker(p_1^\ast).
\]
In the right fibration of \eqref{CD1}, the differentials in the Serre spectral sequence are computed using characteristic classes. The cohomology of $G(pn)$ is an exterior algebra (only additively if $p=2$ and $G=SO$) and the generators transgress to appropriate characteristic classes \cite{Bor53}. This implies that the  kernel of $p_1^\ast$ is the ideal generated by the pullbacks of the characteristic classes under $i_B^\ast$. Appropriate formulae for these are computed in \S \ref{wreathcharclass}. 

Inspecting the cohomology ring $\Z/p[u,v]/(u^2)$ of $BC_p$ in which $\Index_{C_p}(F_n^G)$ is an ideal, we see that this may have the form $(v^l)$ or  $(uv^l, v^r)$. In the latter case, we also notice that as the B\"{o}ckstein sends $u$ to $v$, the ideal must be of the form $(uv^{l-1},v^l)$. Therefore, the computation of the index reduces to the following.  

%
%
\begin{prop}\label{indred}
The Fadell-Husseini index $\Index_{C_p}(F_n^G)$ equals either $(uv^{l-1},v^l)$ or $(v^l)$, where $\pi^\ast (uv^{l-1})$ or $\pi^\ast(v^l)$ is the lowest degree non-zero element of $\Im(\pi^*)\cap \Ker(p_1^\ast)$. 
\end{prop}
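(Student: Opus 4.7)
My plan is to combine a direct identification of the index via the diagram with an algebraic classification of homogeneous ideals in $H^\ast(BC_p)$, then refine that classification using the Bockstein operation. From Proposition \ref{P4}, $\pi^\ast$ is injective, so the diagram \eqref{CD1} yields
\[ \Index_{C_p}(F_n^G) = \Ker(p_1^\ast \circ \pi^\ast) = (\pi^\ast)^{-1}\bigl(\Im(\pi^\ast) \cap \Ker(p_1^\ast)\bigr). \]
Hence $\Index_{C_p}(F_n^G)$ is identified via $\pi^\ast$ with the subideal $\Im(\pi^\ast)\cap\Ker(p_1^\ast)$, and the question reduces to determining which homogeneous ideal of $H^\ast(BC_p) = \Z/p[u,v]/(u^2)$ it is.

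For the algebraic classification, since $u^2=0$, each homogeneous element is either of the form $c v^i$ (degree $2i$) or $d u v^i$ (degree $2i+1$). Put
$l = \min\{i : v^i \in \Index_{C_p}(F_n^G)\}$ and $m = \min\{i : uv^i \in \Index_{C_p}(F_n^G)\}$; being an ideal forces $m \le l$, and the ideal is then precisely $(v^l, uv^m)$. To pin down the relation between $l$ and $m$, I would use that the map $H^\ast(BC_p) \to H^\ast((F_n^G)_{hC_p})$ commutes with the Bockstein $\beta$, so the index is closed under $\beta$. Since $\beta(u)=v$ and $\beta(v)=0$, one has $\beta(uv^m)=v^{m+1}$ up to sign, forcing $l \le m+1$. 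Together with $m \le l$ this gives $l\in\{m,m+1\}$: the case $l=m$ produces the ideal $(v^l,uv^l)=(v^l)$, while $l=m+1$ produces $(uv^{l-1}, v^l)$.

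These two cases are distinguished by the degree of the lowest non-zero element of the ideal: if $l=m$, then $uv^{l-1}$ is not in the ideal (else $m\le l-1$), so the lowest element is $v^l$ in degree $2l$; if $l=m+1$, the lowest element is $uv^{l-1}$ in degree $2l-1$. Transferring this back through the injection $\pi^\ast$, these lowest elements correspond respectively to $\pi^\ast(v^l)$ and $\pi^\ast(uv^{l-1})$ in $\Im(\pi^\ast)\cap\Ker(p_1^\ast)$, which is exactly the dichotomy in the statement. I do not anticipate a serious obstacle here: the one nontrivial conceptual input is the Bockstein-closure of the index, which is immediate from naturality of $\beta$, and the remainder is a short classification argument inside the two-variable ring $\Z/p[u,v]/(u^2)$.
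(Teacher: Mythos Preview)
Your proposal is correct and follows essentially the same route as the paper: both arguments use the injectivity of $\pi^\ast$ from Proposition \ref{P4} and the diagram \eqref{CD1} to identify the index with $(\pi^\ast)^{-1}\Ker(p_1^\ast)$, then classify homogeneous ideals of $\Z/p[u,v]/(u^2)$ and invoke the Bockstein $\beta(u)=v$ to force the form $(v^l)$ or $(uv^{l-1},v^l)$. Your write-up simply spells out the ideal classification and the Bockstein step in more detail than the paper's one-line sketch.
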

\end{mysubsection}

\section{The wreath power of vector bundles}\label{wreathcharclass}
In this section, we construct the wreath power of a vector bundle, and compute it's characteristic classes in terms of those of the original vector bundle. This is the $p$-power analogue of the wreath square defined in \cite{BBKV18} for odd primes $p$.  We compute the Chern classes in the case of complex bundles, and the Pontrjagin classes in the case of real bundles.  The expressions obtained are used in the computation of the index of $F^G_n$ for $G=U$ and $G=SO$ in the following section.

\begin{mysubsection}{The $p$-fold wreath power of a vector bundle}
Let $\xi:E\to B$ be a $n$-dimensional real or complex vector bundle. The $p$-fold product  bundle $\xi\times\cdots\times \xi$ with total space $E^p$ and base $B^p$ is equipped with a $C_p$-action by cyclically permuting the factors via 
\[\sigma\cdot(y_1,\cdots,y_p)=(y_p,y_1,\cdots,y_{p-1})\] 
where $\sigma$ is the generator of $C_p$. The pullback bundle induced by the projection 
 \[
p_1: B^p\times EC_p\to B^p
\] 
has total space $E^p \times EC_p$. Note that the diagonal $C_p$-action on the space $B^p\times EC_p$ is free, it induces a bundle on the quotient spaces
\[ \xi \wr C_p : E^p_{hC_p} \to B^p_{hC_p}.\]
The bundle $\xi\wr C_p$ is of dimension $pn$, and is called $p$-fold wreath power of $\xi$. We note
\begin{prop}\label{P5} The wreath power construction preserves direct sums. That is, for two bundles $\xi_1$ and $\xi_2$,
\[(\xi_1\oplus\xi_2)\wr C_p\cong (\xi_1\wr C_p)\oplus(\xi_2\wr C_p).\]
\end{prop}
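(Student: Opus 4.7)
The plan is to verify the isomorphism stage by stage along the wreath power construction: first at the level of the $p$-fold product bundle over $B^p$ (where $B$ is the common base of $\xi_1$ and $\xi_2$), then after pulling back along $p_1: B^p \times EC_p \to B^p$, and finally after descent to the quotient by the free diagonal $C_p$-action.

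First I would exhibit a natural isomorphism of vector bundles over $B^p$,
\[ (\xi_1 \oplus \xi_2)^{\times p} \;\cong\; \xi_1^{\times p} \oplus \xi_2^{\times p}, \]
defined fiberwise by the rearrangement
\[ \big((u_1,v_1),\ldots,(u_p,v_p)\big) \longmapsto \big((u_1,\ldots,u_p),(v_1,\ldots,v_p)\big). \]
The critical point is $C_p$-equivariance: the left-hand side carries the cyclic permutation of the $p$ pairs, while on the right-hand side $C_p$ acts factorwise as cyclic permutations of the $p$ entries of each $\xi_i^{\times p}$. Both actions only rearrange the outer index and leave the inner summand index $i=1,2$ fixed, so the map above is manifestly equivariant.

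Next, I would pull back along $p_1:B^p\times EC_p\to B^p$. Since pullback commutes with direct sums and preserves equivariant structures, this yields a $C_p$-equivariant isomorphism of $C_p$-equivariant vector bundles over the $C_p$-free total space $B^p\times EC_p$. Finally, I would descend along the free $C_p$-action on the base: passage to the quotient sends equivariant bundles on a free $C_p$-space to honest bundles on the quotient, and this descent commutes with direct sums (the fiber over $[b]$ is canonically the fiber over any lift $b$, and the direct sum decomposition is $C_p$-invariant). By the definition of $\wr C_p$ recalled in the preceding paragraph of the paper, the left-hand side descends to $(\xi_1\oplus\xi_2)\wr C_p$ while the right-hand side descends to $(\xi_1\wr C_p)\oplus(\xi_2\wr C_p)$, giving the asserted isomorphism.

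No step presents a genuine obstacle. The only verification that requires any real attention is the equivariance of the rearrangement in the first step, and that follows tautologically from the fact that the $C_p$-action on every bundle in sight is defined by cyclic permutation of the outer $p$ factors; both pullback along $p_1$ and quotient by a free group action are formal operations that respect Whitney sums.
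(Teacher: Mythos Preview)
Your proposal is correct and follows essentially the same approach as the paper: the paper's proof is the one-line observation that the obvious rearrangement isomorphism $(F_{\xi_1}\oplus F_{\xi_2})^{\times p}\to F_{\xi_1}^{\times p}\oplus F_{\xi_2}^{\times p}$ on fibers induces the required bundle isomorphism. Your version simply makes explicit the equivariance check and the descent along the free $C_p$-action that the paper leaves implicit.
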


\begin{proof}
The obvious choice of isomorphism 
\[a:(F_{\xi_1}\oplus F_{\xi_2})\times\cdots\times (F_{\xi_1}\oplus F_{\xi_2})\to (F_{\xi_1}\times\cdots\times F_{\xi_1})\oplus (F_{\xi_2}\times\cdots\times F_{\xi_2}) \]
induces isomorphism between the fibers of $p$-th wreath power of Whitney sum of bundles and Whitney sum of $p$-th wreath power of bundles, and hence the proposition follows. 
\end{proof}
One also has the following naturality result for the wreath power construction. 
\begin{prop}\label{P6}
The wreath power of vector bundles behaves naturally with respect to pull-backs of vector bundles. In other words,
 \[(f\wr C_p)^*(\xi\wr C_p)=f^*\xi\wr C_p \] 
where $\xi: E\to B$ is a vector bundle, and $f:X\to B$ is a continuous map. 
\end{prop}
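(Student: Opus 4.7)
The plan is to unpack the definition of $\xi \wr C_p$ and verify naturality at each stage. Recall the construction proceeds in three steps: (i) form the external $p$-fold product bundle $\xi^{\times p}: E^p \to B^p$, (ii) pull back along the projection $p_1: B^p \times EC_p \to B^p$ to obtain a bundle with total space $E^p \times EC_p$ and base $B^p \times EC_p$ carrying the diagonal free $C_p$-action, and (iii) take the quotient. I would verify that the two bundles agree by producing canonical isomorphisms at each stage and composing them.

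For step (i), I would use that external products are natural: for $f: X \to B$, the induced map $f^p: X^p \to B^p$ is $C_p$-equivariant and satisfies $(f^p)^*(\xi^{\times p}) \cong (f^*\xi)^{\times p}$ by a canonical isomorphism that is visibly symmetric in the $p$ coordinates and hence $C_p$-equivariant. For step (ii), the square
\[
\xymatrix{
X^p \times EC_p \ar[r]^{f^p \times \mathrm{id}} \ar[d]_{p_1} & B^p \times EC_p \ar[d]^{p_1} \\
X^p \ar[r]^{f^p} & B^p
}
\]
commutes, and the universal property of pullback gives a canonical $C_p$-equivariant isomorphism between the two composite pullbacks of $\xi^{\times p}$ as bundles over $X^p \times EC_p$. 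For step (iii), since $C_p$ acts freely on $EC_p$, the diagonal action is free on both $X^p \times EC_p$ and $B^p \times EC_p$, so quotients exist as honest manifolds; the quotient of a $C_p$-equivariant vector bundle by a compatible free action of $C_p$ on the base is a vector bundle over the quotient, and this descent operation is natural in $C_p$-equivariant bundle maps. Noting that $f \wr C_p$ is by definition the map induced on quotients by $f^p \times \mathrm{id}$, composing the three canonical isomorphisms yields the identification $(f \wr C_p)^*(\xi \wr C_p) \cong f^*\xi \wr C_p$.

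The main obstacle here is purely bookkeeping, rather than any substantive difficulty: one must verify at each stage that the isomorphism produced by the ordinary (non-equivariant) universal property actually intertwines the cyclic permutation $C_p$-actions on source and target. Since in every case the $C_p$-action is defined coordinate-wise and our isomorphisms are built from coordinate-wise constructions, each equivariance check is immediate, and no genuine computation is required.
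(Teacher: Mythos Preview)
Your proposal is correct and is essentially the argument the paper has in mind: the paper's own proof simply says ``the proof follows verbatim from \cite[\S 3.3]{BBKV18} replacing the wreath square by the $p$-fold wreath power,'' and what you have written is exactly that verbatim unpacking---naturality of the external $p$-fold product, compatibility with the pullback along $p_1$, and descent along the free $C_p$-quotient, each step checked to be $C_p$-equivariant. There is no difference in strategy, only in level of detail.
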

\begin{proof}
The proof follows verbatim from \cite[\S 3.3]{BBKV18} replacing the wreath square by the $p$-fold wreath power.
\end{proof}

\end{mysubsection}

\begin{mysubsection}{The Chern class of a wreath power}
We now compute the ($\pmod p$ reduction of the) Chern classes of $\xi\wr C_p$ for a complex vector bundle $\xi$. These lie in the cohomology groups $H^\ast(X^p_{hC_p})$, whose expression we note from Theorem \ref{cohwrpow}. We introduce the following notation using the definitions in \eqref{Einfty}. 
\begin{notation}\label{z}
Let $\phi\in H^\ast(X)$ be a sum of homogeneous classes $\phi=\sum_{i=1}^r \phi_i$. We define $z(\phi)\in H^\ast(X^p_{hC_p})$ by the formula 
\[ z(\phi) = P(\phi)-\sum_{i=1}^r P(\phi_i).\]
Note that $z(\phi)$ lies in the image of $I$. We also write $z_k(\phi)$ to denote the degree $k$ homogeneous part of $z(\phi)$. Observe that if $\phi=c_0+c_1$ with $|c_0|=0$ and $|c_1|=1$, then 
\[
z(\phi)=  \sum_{\substack{\small q_i\in \{0,1\} \\(q_1,\cdots,q_p)\neq (0,\cdots,0) \\ (q_1,\cdots,q_p)\neq (1,\cdots,1)}}c_{q_1}(\xi)\otimes\cdots\otimes c_{q_p}(\xi) . \]
If we further put $c_0=1$, the homogeneous parts of $z(1+c_1)$ are just the elementary symmetric polynomials on the terms $1\otimes \cdots \otimes c_1 \otimes \cdots \otimes 1$. 
\end{notation}

 We start by computing the Chern classes of the wreath power of complex line bundles.  Note that for a complex bundle $\xi: E\to B$ and a map $f:X\to B$ , Proposition \ref{P6}  implies 
\[(f\wr C_p)^*(c(\xi\wr C_p))=c(f^*(\xi)\wr C_p),\] 
where $c(\xi\wr C_p)$ denotes the total Chern class of bundle $\xi\wr C_p$. Recall that the cohomology of $BC_p$ is given by \eqref{cohcp}
\[H^*(BC_p)=\Z/p[u,v]/( u^2) \]
 where $\deg(u)=1$ and $\deg(v)=2$.
\begin{prop}\label{P8}
Let $\xi$ be a $1$-dimensional complex vector bundle over a CW-complex $B$. Then
\begin{align*}
c(\xi\wr C_p)&=(1+v^{p-1})+P(c_1(\xi))+\sum_{\substack{\small q_i\in \{0,1\} \\(q_1,\cdots,q_p)\neq (0,\cdots,0) \\ (q_1,\cdots,q_p)\neq (1,\cdots,1)} }c_{q_1}(\xi)\otimes\cdots\otimes c_{q_p}(\xi) \\ 
 &= (1+v^{p-1})+P(c_1(\xi))+z(c(\xi)).
\end{align*} 
where $P$ is as defined in \eqref{E2}.
\end{prop}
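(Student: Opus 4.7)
By Proposition \ref{P6}, any line bundle $\xi$ over $B$ is pulled back from the universal line bundle $\gamma^1_U$ over $BU(1)$ via a classifying map $f \colon B \to BU(1)$, and both sides of the claimed formula are natural under $(f \wr C_p)^*$ (the right hand side involves $P(c_1(\xi))$, $z(c(\xi))$, and base classes $v^{p-1}$, all of which are built functorially). Thus it suffices to verify the identity for $\xi = \gamma^1_U$, for which the ambient space is $(BU(1))^p_{hC_p} = BW_1^U$ whose mod $p$ cohomology is computed by Theorem \ref{cohwrpow}.

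I would pin down $c(\gamma^1_U \wr C_p)$ by its behaviour under two restriction maps. On the one hand, the fibre inclusion $j \colon BU(1)^p \hookrightarrow BW_1^U$ identifies $j^*(\gamma^1_U \wr C_p)$ with the Whitney sum $\bigoplus_{i=1}^p \pi_i^* \gamma^1_U$, giving
\[
j^* c(\gamma^1_U \wr C_p) \;=\; \prod_{i=1}^p \bigl(1 + c_1^{(i)}\bigr) \;=\; \sum_{S \subseteq \{1,\dots,p\}} \bigotimes_{i=1}^p c_{[i \in S]}(\gamma^1_U),
\]
where $c_1^{(i)}$ puts $c_1$ in the $i$-th slot. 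The $S = \emptyset$ and $S = \{1,\dots,p\}$ summands match $j^*(1)$ and $j^* P(c_1)$ respectively, the $2^p - 2$ mixed summands match $j^* z(c(\gamma^1_U))$, and $j^*(v^{p-1}) = 0$. On the other hand, the splitting $C_p \hookrightarrow W_1^U$ induces an inclusion $i \colon BC_p \hookrightarrow BW_1^U$ along which $\gamma^1_U \wr C_p$ restricts to the bundle $EC_p \times_{C_p} \C^p$, i.e.\ the regular representation bundle. Decomposing $\C[C_p] = \bigoplus_{j=0}^{p-1} L_{\chi_j}$ and using $c_1(L_{\chi_j}) = jv$, its total Chern class is
\[
\prod_{j=0}^{p-1}(1 + jv) \;=\; 1 + v^{p-1} \quad \text{in } \F_p[v],
\]
using $x^{p-1} - 1 = \prod_{j=1}^{p-1}(x - j)$ in $\F_p[x]$ (up to the sign convention on $v$); meanwhile $i^*$ kills both $P(c_1)$ and $z(c(\gamma^1_U))$ because $c_1$ pulls back to $0$ along $* \to BU(1)$.

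Finally I would argue that the pair $(j^*, i^*)$ is injective on $H^*(BW_1^U)$ in the total degrees $\leq 2p$ occupied by a Chern class of a $p$-dimensional bundle. Indeed, the $E_\infty$-decomposition from \eqref{Einfty} splits such a class into (a) an $\operatorname{Im}(\pi^*)$ part detected by $i^*$, (b) an $\operatorname{Im}(I)$ part detected by $j^*$, and (c) a $\Z/p \cdot P(c_1)$ part in degree exactly $2p$ also detected by $j^*$; cross terms $\pi^*(\beta) \cdot P(\alpha)$ with $|\beta|, |\alpha| > 0$ have total degree $\geq 2p + 1$ and are therefore absent from the Chern class. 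Matching restrictions then forces equality of the two sides, completing the proof. The main obstacle I anticipate is the sign bookkeeping for the regular-representation computation; making the joint-detection argument precise is routine once one has the structure of $E_\infty$.
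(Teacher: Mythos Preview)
Your proposal is correct and follows essentially the same approach as the paper: both arguments determine $c(\xi\wr C_p)$ by restricting along the fibre inclusion $B^p\hookrightarrow B^p_{hC_p}$ (your $j^\ast$, the paper's $p_2^\ast$) to identify the $P$- and $z$-parts, and along the point inclusion $BC_p\hookrightarrow B^p_{hC_p}$ (your section $i$, the paper's $i\wr C_p$ over a basepoint) to pick up the regular-representation contribution $1+v^{p-1}$. Your preliminary reduction to the universal bundle and your phrasing via joint injectivity of $(j^\ast,i^\ast)$ in degrees $\le 2p$ are cosmetic variations on the paper's argument, which works directly over an arbitrary $B$.
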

\begin{proof}
We proceed as in \cite[Proposition 3.5]{BBKV18}. Consider the map between bundles
\[
\xymatrix{
E^p_{hC_p} \ar@{=}[r] & E(p_1^*(\xi^p))/C_p\ar[d] & & E(p_1^*(\xi^p))\ar[ll]\ar[d]\ar[rr] & & E^p\ar[d]^{\xi^p}\\
B^p_{hC_p} \ar@{=}[r] & B^p\times_{C_p} EC_p & & B^p\times EC_p\ar[ll]_-{p_2}\ar[rr]^-{p_1} & & B^p.
}
\]
Note that the total Chern class may be computed as (where $\pi_i:B^p \to B$ is the $i^{th}$ projection)
\begin{align*}
c(\xi^p)=& c(\pi_1^*(\xi)\oplus\cdots\oplus\pi^*_p(\xi)) \\
=& \prod_{i=1}^p c(\pi_i^*(\xi))\\
=& \prod_{i=1}^p (1+1\otimes\cdots\otimes c_1(\xi)\otimes \cdots \otimes 1)\\
=& 1+ c_1(\xi)\otimes \cdots \otimes c_1(\xi) + \sum_{\substack{\small q_i\in \{0,1\} \\(q_1,\cdots,q_p)\neq (0,\cdots,0) \\ (q_1,\cdots,q_p)\neq (1,\cdots,1)}}c_{q_1}(\xi)\otimes\cdots\otimes c_{q_p}(\xi).
\end{align*}
This gives \begin{align*}
p_2^*(c(\xi\wr C_p))=& p_1^*(c(\xi^p))\\
=& p_1^*\Big(1+  c_1(\xi)\otimes \cdots \otimes c_1(\xi) + \sum_{\substack{\small q_i\in \{0,1\} \\(q_1,\cdots,q_p)\neq (0,\cdots,0) \\ (q_1,\cdots,q_p)\neq (1,\cdots,1)}}c_{q_1}(\xi)\otimes\cdots\otimes c_{q_p}(\xi)\Big).\\
\end{align*}
The total Chern class of $\xi\wr C_p$ is concentrated in degrees $\leq 2p$. From Theorem \ref{cohwrpow}, we have that in these degrees, the cohomology of $[BU(1)]^p_{hC_p}$ is a sum of $P(c_1)$, monomials in $u$ and $v$, and elements in the image of $I$. The map $p_2$ is homotopic to the inclusion of the fibre in \eqref{F6}, therefore, $p_2^\ast c(\xi \wr C_p)$ computes the part of $c(\xi\wr C_p)$ that are not monomials in $u$ or $v$. It follows that 
\[
c(\xi\wr C_p)= 1+ P(c_1(\xi)) + z(c(\xi)) + \text{ sum of monomials in } u \text{ and } v.
\]
Let $x\in B$, be a point.  The inclusion  $i: x\xhookrightarrow{} B$ gives rise to a map between fibrations
\[
\xymatrix{
\xi_x\wr C_p \ar[rr]\ar[d] & & E^p_{hC_p} \ar[d]\\
BC_p \ar[rr]^{i\wr C_p} & & B^p_{hC_p}.\\
}
\]
Note that the pull-back $(i\wr C_p)^*(\xi\wr C_p)$ is the dimension $p$-bundle induced by the regular representation of $C_p$. In other words, let $\lambda$ denote one dimensional complex representation of $C_p$ where the chosen generator of $C_p$ acts by rotation of angle $\frac{2\pi}{p}$. The pull-back bundle has the following structure
\[\sum_{i=0}^{p-1}\xi_x\otimes \lambda^i.\] 
Thus the total Chern class of the pull-back bundle is
\begin{align*}
c(\xi_x\wr C_p)&=\prod_{i=0}^{p-1}c(\xi_x\otimes \lambda^i)
=\prod_{i=0}^{p-1} c(\lambda^i)
=\prod_{i=0}^{p-1} (1+\lambda v)
= 1+v^{p-1}.
\end{align*}
Therefore, 
\[(i\wr C_p)^*(c(\pi\wr C_p))=1+v^{p-1}\] 
which implies 
\[c(\xi\wr C_p)=(1+v^{p-1})+P(c_1(\xi))+z(c(\xi)), \]
proving the proposition.
\end{proof}

We now compute the Chern classes of $\xi\wr C_p$ for a general vector bundle $\xi$. 
\begin{theorem}\label{TCh}
Let $\xi$ be a $n$-dimensional complex vector bundle over a CW-complex $B$. Then
\begin{myeq} \label{chernwreath} 
c(\xi\wr C_p)=\sum_{0\leq r\leq n}P(c_r(\xi))(1+v^{p-1})^{n-r}+z(c(\xi)).
\end{myeq}  
\end{theorem}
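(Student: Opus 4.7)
The plan is to reduce the statement to the case of a direct sum of complex line bundles via naturality and the splitting principle, then to expand using Proposition~\ref{P8} and simplify using the annihilation relation $I(y)\cdot v = 0$ coming from Theorem~\ref{cohwrpow}. By Proposition~\ref{P6} both sides of \eqref{chernwreath} are natural in $\xi$, so it suffices to verify the identity for the universal bundle $\gamma^n_U$ over $BU(n)$. Applying Proposition~\ref{P7} to the maximal torus inclusion $f\colon BU(1)^n \hookrightarrow BU(n)$, which is injective on $\Z/p$-cohomology, I may further pull back and assume $\xi = L_1 \oplus \cdots \oplus L_n$; write $x_i := c_1(L_i)$.

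Under this reduction, Proposition~\ref{P5} and the Whitney sum formula, combined with Proposition~\ref{P8}, give
\[
c(\xi\wr C_p) = \prod_{i=1}^n \bigl[(1+v^{p-1}) + P(x_i) + z(1+x_i)\bigr].
\]
I expand this product by recording, for each $i$, which of the three summands is picked: let $T$ index the factors contributing $P(x_i)$ and $U$ those contributing $z(1+x_i)$. Since each $z(1+x_i)$ lies in the image of $I$, the relation $I(y)\cdot v = 0$ annihilates any product of a $z$-factor with a positive power of $v$. Hence whenever $U\neq\emptyset$ the factor $(1+v^{p-1})^{n-|T|-|U|}$ collapses to its constant term $1$, and using the multiplicativity $P(ab)=P(a)P(b)$ together with the identity $(1+P(x_i)) + z(1+x_i) = P(1+x_i)$, the $U\neq\emptyset$ contributions assemble to
\[
\prod_i P(1+x_i) - \prod_i(1+P(x_i)) = P(c(\xi)) - \prod_i(1+P(x_i)).
\]
The $U=\emptyset$ contributions sum to $\prod_i[(1+v^{p-1}) + P(x_i)] = \sum_{r=0}^n (1+v^{p-1})^{n-r} \sum_{|T|=r} P(m_T)$ with $m_T := \prod_{i\in T} x_i$.

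It remains to replace $\sum_{|T|=r} P(m_T)$ by $P(c_r(\xi)) = P(e_r(x))$. These differ because $P$ is not additive, but the difference $P(e_r(x)) - \sum_{|T|=r} P(m_T)$ is a sum of the non-diagonal $C_p$-orbits arising in the expansion of $(e_r)^{\otimes p}$, hence lies in the image of $I$. Multiplying by $(1+v^{p-1})^{n-r}$ therefore picks up only the constant term $1$, and summing over $r$ telescopes the error to $\sum_r P(c_r(\xi)) - \prod_i(1+P(x_i))$. Combining with the $U\neq\emptyset$ piece and using $z(c(\xi)) = P(c(\xi)) - \sum_r P(c_r(\xi))$ produces exactly \eqref{chernwreath}.

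The main obstacle is tracking the failure of additivity of $P$ and confirming that all resulting discrepancies lie in the image of $I$, so that multiplication by the positive-$v$ portion of $(1+v^{p-1})^{n-r}$ makes them vanish. Everything else is Whitney multiplicativity together with careful combinatorial bookkeeping of the expansion.
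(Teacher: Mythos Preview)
Your proof is correct. It uses the same ingredients as the paper's argument---the splitting principle (via Propositions~\ref{P5}, \ref{P6}, \ref{P7}), the line-bundle formula of Proposition~\ref{P8}, the multiplicativity of $P$, and the annihilation $I(y)\cdot v=0$ from \eqref{Einfty}---but organizes the computation differently. The paper proceeds by induction on $n$: assuming the formula for an $(n-1)$-dimensional bundle $\xi$, it multiplies $c(\xi\wr C_p)\cdot c(L\wr C_p)$ and tracks the result through four labeled cross-terms $\mathrm{I},\mathrm{II},\mathrm{III},\mathrm{IV}$. You instead split $\xi$ completely into line bundles at the outset and expand the $n$-fold product $\prod_i[(1+v^{p-1})+P(x_i)+z(1+x_i)]$ in one step, regrouping according to whether any $z$-factor appears. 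Your direct expansion makes the structure of the answer more transparent: the $z$-free part immediately produces the skeleton $\sum_r(1+v^{p-1})^{n-r}\sum_{|T|=r}P(m_T)$, and the two corrections (from $U\neq\emptyset$ and from replacing $\sum_{|T|=r}P(m_T)$ by $P(c_r)$) visibly combine to $z(c(\xi))$ after the $\prod_i(1+P(x_i))$ terms cancel. The paper's inductive route is more mechanical at each step but hides this global telescoping. One point you should make explicit in a full write-up is that a product containing at least one factor from the image of $I$ is still annihilated by $v$; this follows from associativity and $I(y)\cdot v=0$, and it is what justifies collapsing $(1+v^{p-1})^{|S|}$ to $1$ whenever $U\neq\emptyset$.
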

\begin{proof}
We proceed by induction on the dimension of the bundle. Using the splitting principle, together with the relation of the wreath power with direct sums (Proposition \ref{P5}) and pull-backs (Proposition \ref{P6}), it suffices to assume the theorem for $\xi$ and prove it for $\xi\oplus L$ where $L$ is a line bundle. In this reduction, we are also using that an injective map on cohomology induces an injective map on wreath powers (Proposition \ref{P7}).  
Assume that $\dim(\xi)=n-1$, so that we have
\begin{align*}
 c((\xi\oplus L)\wr C_p)
&= c(\xi \wr C_p)c(L\wr C_p)\\
& = \Big[\sum_{0\leq r\leq n-1}P(c_r(\xi))(1+v^{p-1})^{n-1-r}+z(c(\xi))\Big] \Big((1+v^{p-1})+P(c_1(L))+z(c(L))\Big).
\end{align*}
The last formula comes from Proposition \ref{P8}. For the ease of doing the calculation, we denote  
\begin{myeq}\label{i}
\begin{aligned}
&\text{ I :$=$ } \sum_{0\leq r\leq n-1}P(c_r(\xi))(1+v^{p-1})^{n-1-r} \\
&\text{ II :$=$ } z(c(\xi))  \\
& \text{ III :$=$ } (1+v^{p-1})+P(c_1(L)) \\
& \text { IV :$=$ } z(c(L)),
\end{aligned}
\end{myeq}
so that in terms of \eqref{i},
\[  c((\xi\oplus L)\wr C_p) = \text{I} \cdot \text{III} + \text{I} \cdot \text{IV} + \text{II} \cdot \text{III} + \text{II} \cdot \text{IV}. \]
Since $P$ is multiplicative, I$\cdot$ III  gives 
\[\sum_{0\leq r \leq n-1}P(c_r(\xi))(1+v^{p-1})^{n-r}+\sum_{0\leq r \leq n-1}P(c_r(\xi)c_1(L))(1+v^{p-1})^{n-1-r}.\]
We write $Z'(x,y)=P(x+y)-P(x)-P(y)$ for two elements $x,y$ in the cohomology of $B$, and observe from 
\[c_r(\xi\oplus L)= c_r(\xi)+c_{r-1}(\xi)c_1(L)\]
that 
\begin{align*}
P(c_r(\xi\oplus L))&= \; \otimes_{p}(c_r(\xi)+c_{r-1}(\xi)c_1(L))\\
&=\; P(c_r(\xi))+P(c_{r-1}(\xi)c_1(L))+Z^{\prime}(c_r(\xi),c_{r-1}(\xi)c_1(L)).
\end{align*}
Note that $Z'(c_r(\xi),c_{r-1}(\xi)c_1(L))$ lies in the image of the operator $I$, so that it gives $0$ when multiplied by $v$. Thus,
\begin{align*}
\text{I$\cdot$ III} &=\sum_{0\leq r\leq n}\Big[P(c_r(\xi))+P(c_{r-1}(\xi)c_1(L))\Big](1+v^{p-1})^{n-r}\\
&= \sum_{0\leq r\leq n}P(c_r(\xi\oplus L))(1+v^{p-1})^{n-r}- \sum_{0\leq r\leq n}Z^{\prime}(c_r(\xi),c_{r-1}(\xi)c_1(L))\\
&= \sum_{0\leq r\leq n}P(c_r(\xi\oplus L))(1+v^{p-1})^{n-r}- \sum_{0\leq r\leq n}P(c_r(\xi\oplus L)) - P(c_r(\xi)) - P(c_{r-1}(\xi)c_1(L)) .
\end{align*}
For the rest of the products we use the fact that the image of the operator $z$ is contained in the image of $I$, so that $v$ multiplies to $0$. Therefore we have 
\begin{align*}
\text{I}\cdot \text{IV} & =  ~~  \sum_{0\leq r\leq n-1}P(c_r(\xi))z(1+c_1(L)). \\
\text{II}\cdot \text{III} & =  ~~z(c(\xi))+z(c(\xi))P(c_1(L)). \\
\text{II}\cdot \text{IV} & = ~~ z(c(\xi))z(1+c_1(L)).
\end{align*}
We simplify the first expression as 
\begin{align*} 
\text{I}\cdot \text{IV} = ~~ & \sum_{0\leq r\leq n-1}P(c_r(\xi))z(1+c_1(L)) \\
      & = \sum_{0\leq r\leq n-1}P(c_r(\xi))\Big( P(1+c_1(L)) - 1- P(c_1(L)) \Big) \\
          &= \sum_{0\leq r\leq n-1}P(c_r(\xi)(1+c_1(L)))-P(c_r(\xi)) - P(c_r(\xi)c_1(L)).
\end{align*}

Therefore, we get that 
\begin{myeq}\label{first2}
\text{I} \cdot \text{III} + \text{I} \cdot \text{IV} = \sum_{0\leq r\leq n}P(c_r(\xi\oplus L))(1+v^{p-1})^{n-r} +  \sum_{0\leq r\leq n-1}\Big(P(c_r(\xi)(1+c_1(L))) - P(c_{r+1}(\xi\oplus L)) \Big).
\end{myeq}
We next have 
\begin{align*}
\text{II}\cdot \text{III}  = ~~ & z(c(\xi))+z(c(\xi))P(c_1(L)) \\
& = \Big(P(c(\xi))- \sum_{0\leq r \leq n-1} P(c_r(\xi))\Big)(1+P(c_1(L)) \\ 
&= P(c(\xi)) + P(c(\xi)c_1(L)) -   \sum_{0\leq r \leq n-1} \Big(P(c_r(\xi)) + P(c_r(\xi)c_1(L))\Big). 
\end{align*}
The last expression gives 
\begin{align*} 
\text{II}\cdot \text{IV} & = ~~ z(c(\xi))z(1+c_1(L)) \\
& = \Big[ P(c(\xi)) - \sum_{0\leq r \leq n-1} P(c_r(\xi))\Big] (P(1+c_1(L))-1-P(c_1(L)))\\
&= P(c(\xi\oplus L))-P(c(\xi)) - P(c(\xi)c_1(L)) - \\
& \sum_{0\leq r \leq n-1}\Big[ P(c_r(\xi)(1+c_1(L)) - P(c_r(\xi)) - P(c_r(\xi)c_1(L))\Big].  
\end{align*}
Adding the last two expressions together we get 
\begin{myeq}\label{last2}
 \text{II} \cdot \text{III} + \text{II} \cdot \text{IV} = P(c(\xi\oplus L)) - \sum_{0\leq r \leq n-1} P(c_r(\xi)(1+c_1(L)).
\end{myeq}
Now summing all the terms together using \eqref{first2} and \eqref{last2}, we get  
\[
\begin{aligned}
&\text{I} \cdot \text{III} + \text{I} \cdot \text{IV} + \text{II} \cdot \text{III} + \text{II} \cdot \text{IV} \\
&=  P(c(\xi\oplus L)) + \sum_{0\leq r\leq n}P(c_r(\xi\oplus L))(1+v^{p-1})^{n-r} - \sum_{0\leq r\leq n-1} P(c_{r+1}(\xi\oplus L))  \\ 
 &=  \sum_{0\leq r\leq n}P(c_r(\xi\oplus L))(1+v^{p-1})^{n-r} + z(c(\xi\oplus L)). 
\end{aligned}
\]
%
This calculcation completes the proof of the theorem.
\end{proof}
\end{mysubsection}

\begin{mysubsection}{The Pontrjagin class of a wreath power}
The computations for the Pontrjagin classes of the $p$-th wreath power of a real  vector bundle $\xi : E\to B$ is now straightforward using Theorem \ref{TCh}.  Recall that $p_i(\xi) = (-1)^ic_{2i}(\xi\otimes \C)$. One readily observes 
\begin{myeq}\label{complexwreath}
(\xi\otimes \C)\wr C_p\cong (\xi\wr C_p)\otimes \C.
\end{myeq}
Now apply Theorem \ref{TCh} along with  \eqref{complexwreath} to deduce the following formula.
\begin{prop}\label{pontwreath}
The $\pmod p$ reduction of the Pontrjagin classes of a real $n$-dimensional vector bundle $\xi$ are given by,
\[
p_i(\xi\wr C_p)
=\Big[ \sum_{0\leq r\leq \floor{\frac{n}{2}}}(-1)^{r-i} P(p_r(\xi))(1+v^{p-1})^{n-2r}\Big]_{2i}+z_{4i}(p(\xi)).\]
\end{prop}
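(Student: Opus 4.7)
The plan is to deduce the Pontrjagin formula from the Chern class formula already established in Theorem \ref{TCh}, by complexifying. The starting identity is $p_i(\eta) = (-1)^i c_{2i}(\eta\otimes\C)$ applied to $\eta = \xi\wr C_p$, combined with the isomorphism $(\xi\wr C_p)\otimes\C \cong (\xi\otimes\C)\wr C_p$ of \eqref{complexwreath}. This reduces the computation of $p_i(\xi\wr C_p)$ to the computation of $c_{2i}\bigl((\xi\otimes\C)\wr C_p\bigr)$, which Theorem \ref{TCh} already makes explicit.

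The next step is to apply Theorem \ref{TCh} to the $n$-dimensional complex bundle $\xi\otimes\C$, obtaining
\[
c((\xi\otimes\C)\wr C_p) = \sum_{0\leq s\leq n} P(c_s(\xi\otimes\C))(1+v^{p-1})^{n-s} + z(c(\xi\otimes\C)).
\]
Since $p$ is odd, the odd Chern classes of a complexified real bundle vanish modulo $p$ (they are $2$-torsion integrally), so only even indices $s=2r$ contribute, and on these $c_{2r}(\xi\otimes\C)=(-1)^r p_r(\xi)$. Restricting to the degree $4i$ piece of the identity $p_i(\xi\wr C_p) = (-1)^i c_{2i}((\xi\otimes\C)\wr C_p)$ immediately yields the main sum on the right-hand side of the proposition, with leading sign $(-1)^{i+r} = (-1)^{r-i}$, indexed by $0\leq r\leq \lfloor n/2 \rfloor$.

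The step requiring the most care, and essentially the only real obstacle, is matching the correction term $(-1)^i[z(c(\xi\otimes\C))]_{2i}$ with $z_{4i}(p(\xi))$. The key observation is that every monomial $p_{r_1}\otimes\cdots\otimes p_{r_p}$ appearing in $[P(c(\xi\otimes\C))]_{2i}$ satisfies $r_1+\cdots+r_p = i$, so carries the uniform sign $(-1)^{r_1+\cdots+r_p} = (-1)^i$; likewise the diagonal corrections $P((-1)^r p_r) = (-1)^{rp}P(p_r)$ contributing in degree $4i$ force $rp = i$, and since $p$ is odd the sign collapses to $(-1)^r = (-1)^i$. Factoring this common $(-1)^i$ outside gives $[z(c(\xi\otimes\C))]_{2i} = (-1)^i z_{4i}(p(\xi))$, which cancels the outer $(-1)^i$ and produces exactly $z_{4i}(p(\xi))$ in the final expression, completing the argument.
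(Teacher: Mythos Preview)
Your proof is correct and follows essentially the same route as the paper: complexify via \eqref{complexwreath}, apply Theorem~\ref{TCh} to $\xi\otimes\C$, discard the odd Chern classes (which are $2$-torsion and hence vanish mod $p$), and track the signs through $c_{2r}(\xi\otimes\C)=(-1)^r p_r(\xi)$ together with $(-1)^{rp}=(-1)^r$. Your treatment of the $z$-term sign is in fact slightly more explicit than the paper's, which simply notes that ``the signs on the second term cancel out.''
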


\begin{proof}
Theorem \ref{TCh} implies that 
\[c(\xi\wr C_p\otimes \C)
=\sum_{0\leq r\leq n}P(c_r(\xi\otimes \C))(1+v^{p-1})^{n-r}+z(c(\xi\otimes \C)).\]
We know that the odd Chern classes of complexification of real vector bundle are $2$-torsion \cite[Pg 174]{MS74}. Since we are working over $\Z/p$ for odd  $p$, the odd Chern classes $c_{2i+1}(\xi\otimes \C)=0$. This gives 
\begin{align*}
(-1)^ic_{2i}(\xi\wr C_p\otimes \C)
&=(-1)^i\big[\sum_{0\leq r\leq n}P(c_r(\xi\otimes \C))(1+v^{p-1})^{n-r}\big]_{2i}+(-1)^iz_{4i}(c(\xi\otimes \C))\\
\implies p_i(\xi\wr C_p)&=(-1)^i\big[\sum_{0\leq r\leq [\frac{n}{2}]}P(c_{2r}(\xi\otimes \C))(1+v^{p-1})^{n-2r}\big]_{2i}+(-1)^iz_{4i}(c(\xi\otimes \C))
\end{align*}
\begin{align*}
&=\big[\sum_{0\leq r\leq [\frac{n}{2}]}(-1)^{rp+i}P(p_{i}(\xi))(1+v^{p-1})^{n-2r}\big]_{2i}+ (-1)^i\sum_{\substack{\small q_i\in \{0,1,\cdots,[\frac{n}{2}]\} \\(q_1,\cdots,q_p)\neq (j,\cdots,j)\\ \sum q_j = i}} (-1)^{q_1}p_{q_1}\otimes\cdots\otimes (-1)^{q_p}p_{q_p}\\
&=\big[\sum_{0\leq r\leq [\frac{n}{2}]}(-1)^{r+i}P(p_{r}(\xi))(1+v^{p-1})^{n-2r}\big]_{2i}+z_{4i}(p(\xi)).
\end{align*}
The last equality comes from the fact that the signs on the second term cancel out and $(-1)^{rp}=(-1)^r$. 
\end{proof}

\end{mysubsection}

\begin{mysubsection}{The Euler class of a wreath power}
For computation of the$\pmod p$ reduction of the Euler class we use the generalized splitting principle for any $G$-vector bundle which states,  
\begin{theorem}\cite[Theorem 4]{May05}\label{split}
Let $T$ be a maximal torus of a compact connected Lie group $G$ of rank $n$. For every principal $G$-bundle $\xi: E \to X$, there exists $q: Y\to X$ with fibre $G/T$, and a reduction of the structure group of $q^*(\xi)$ to $T$ such that $H^*(Y)=H^*(X)\otimes H^*(G/T)$ and $q^*$ is the canonical inclusion.
\end{theorem}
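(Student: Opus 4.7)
The plan is to set $Y := E/T$ with its induced projection $q : Y \to X$, so $Y$ is the fibre bundle with fibre $G/T$ associated to $\xi$ by the action of $G$ on $G/T$. First I would observe that the quotient map $E \to E/T = Y$ is itself a principal $T$-bundle, and that it embeds into $q^*\xi = E \times_X Y \to Y$ via $e \mapsto (e, eT)$ as a principal $T$-subbundle. This produces the desired reduction of the structure group of $q^*\xi$ from $G$ to $T$, and handles everything in the statement except the cohomological identification.

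For the cohomology of $Y$, the principal $T$-bundle $E \to Y$ is classified by a map $\phi : Y \to BT$. The key point is that for any point $x \in X$, the restriction of $\phi$ to the fibre $q^{-1}(x) \cong G/T$ agrees, up to homotopy, with the canonical classifying map $G/T \to BT$ of the principal $T$-bundle $G \to G/T$. By Borel's theorem one has $H^\ast(G/T) \cong H^\ast(BT)/(H^\ast(BT)^W_+)$; in particular $H^\ast(G/T)$ is a free module over the ground ring of finite rank, and one can choose a homogeneous basis $\{\bar\phi_1,\dots,\bar\phi_N\}$ consisting of images of polynomial classes $\tilde\phi_i \in H^\ast(BT)$. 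The pulled-back classes $\phi^\ast(\tilde\phi_i) \in H^\ast(Y)$ then restrict on every fibre of $q$ to the basis $\{\bar\phi_i\}$ of $H^\ast(G/T)$.

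Next I would invoke the Leray--Hirsch theorem for the fibration $G/T \to Y \xrightarrow{q} X$: the existence of global classes on the total space restricting to a fibrewise basis is exactly its hypothesis, and since $H^\ast(G/T)$ is concentrated in even degrees (via the Bruhat/Schubert cell decomposition) no local-coefficient issue arises. The conclusion is an isomorphism $H^\ast(Y) \cong H^\ast(X) \otimes H^\ast(G/T)$ of $H^\ast(X)$-modules, under which $q^\ast$ becomes the canonical inclusion $a \mapsto a \otimes 1$, as required.

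The main obstacle I expect is the second step: setting up and verifying that the Borel-theoretic generators of $H^\ast(G/T)$ genuinely lift to globally defined classes on $Y$ via the classifying map of $E \to Y$, and checking carefully that their restriction to fibres really does recover the fibrewise basis. Once this compatibility with $\phi$ is in place, everything else is a formal application of Leray--Hirsch, so the heart of the argument is identifying the Borel generators with characteristic classes of the $T$-bundle $E \to Y$.
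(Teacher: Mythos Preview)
The paper does not prove this statement; it is quoted as \cite[Theorem~4]{May05} and used as a black box to justify the generalized splitting principle for $SO(2n)$-bundles in the computation of the Euler class of a wreath power. So there is no paper-side argument to compare against, and your outline is essentially the standard proof (and, modulo presentation, the one in May's note).

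Two minor remarks on the outline itself. First, the isomorphism $H^\ast(G/T)\cong H^\ast(BT)/(H^\ast(BT)^W_+)$ you invoke is sensitive to the coefficient ring (it holds rationally, and with $\Z/p$-coefficients for $p$ not a torsion prime of $G$). What your argument actually requires is only the surjectivity of $H^\ast(BT)\to H^\ast(G/T)$, and this holds integrally: $G/T$ is the flag variety, its integral cohomology is free and generated by first Chern classes of the line bundles associated to characters of $T$, hence by classes pulled back from $BT$. That suffices to produce the global lifts $\phi^\ast(\tilde\phi_i)$ over any coefficient ring, which is what May's statement asserts. Second, your aside that ``no local-coefficient issue arises'' because $H^\ast(G/T)$ is concentrated in even degrees is not the operative reason; the Leray--Hirsch hypothesis (global classes restricting to a fibrewise basis) already forces the edge map $H^\ast(Y)\to H^\ast(G/T)$ to be surjective and the spectral sequence to collapse, independently of any parity considerations.
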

Let $G$ be $SO(2n)$ and $T$ be $SO(2)^n$ embedded in $G$. Theorem \ref{split} implies that if $\xi$ is an oriented real $2n$-vector bundle then $q^*(\xi)$ splits as sum of $2$-plane bundles \cite[Example 10]{May05} with $q^*$ injective in cohomology. This enables us to compute the $\pmod p$ reduction of Euler class of $p$-th wreath power of a bundle by first computing the same for line bundles, and then  applying the generalized splitting principle.
\begin{prop} \label{eulerwreath}
The$\pmod p$ reduction of the Euler class of the wreath power of an even dimensional real  bundle $\xi$ is given by,
\[e(\xi\wr C_p)=P(e(\xi)).\]
\end{prop}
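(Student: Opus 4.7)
The plan is to reduce to the case of oriented $2$-plane bundles via the generalized splitting principle (Theorem \ref{split}), and then read off the Euler class from the Chern class formula for complex line bundles (Proposition \ref{P8}).

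First I would apply Theorem \ref{split} with $G=SO(2n)$ and $T=SO(2)^n$ to the oriented real $2n$-plane bundle $\xi\colon E\to B$. This produces $q\colon Y\to B$ for which $q^\ast\xi\cong \eta_1\oplus\cdots\oplus\eta_n$ is a Whitney sum of oriented $2$-plane bundles, with $q^\ast\colon H^\ast(B)\to H^\ast(Y)$ injective. Proposition \ref{P6} gives $(q\wr C_p)^\ast(\xi\wr C_p)\cong (q^\ast\xi)\wr C_p$, and Proposition \ref{P7} upgrades injectivity of $q^\ast$ to injectivity of $(q\wr C_p)^\ast$ on cohomology. Hence it is enough to verify the formula on $Y$, i.e.\ for $\zeta=\eta_1\oplus\cdots\oplus\eta_n$.

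Next I would exploit multiplicativity on both sides. By Proposition \ref{P5}, $\zeta\wr C_p\cong\bigoplus_i(\eta_i\wr C_p)$, and the Whitney product formula for Euler classes yields
\[
e(\zeta\wr C_p)=\prod_{i=1}^n e(\eta_i\wr C_p).
\]
Since $P$ is a ring map on the image part (it satisfies $P(xy)=P(x)P(y)$ by its definition in \eqref{E2}), and $e(\zeta)=\prod_i e(\eta_i)$, we likewise obtain $P(e(\zeta))=\prod_i P(e(\eta_i))$. This reduces the statement to a single oriented $2$-plane bundle $\eta$, namely the assertion $e(\eta\wr C_p)=P(e(\eta))$.

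For the $2$-plane case I would pick a complex structure on $\eta$, turning it into a complex line bundle. The wreath power $\eta\wr C_p$ is then naturally a complex $p$-plane bundle whose underlying real $2p$-plane bundle is the wreath power of $\eta$ taken in the real sense; this identification is immediate from the construction, since both have total space $E^p\times_{C_p}EC_p$ with identical projection. Under this identification the real Euler class equals the top complex Chern class, i.e.\ $e(\eta\wr C_p)=c_p(\eta\wr C_p)$. Proposition \ref{P8} gives
\[
c(\eta\wr C_p)=(1+v^{p-1})+P(c_1(\eta))+z(c(\eta)).
\]
A degree inspection finishes the job: the summand $1+v^{p-1}$ lives in degrees $0$ and $2(p-1)$, while $z(c(\eta))=P(c(\eta))-1-P(c_1(\eta))$ is the sum of the mixed terms in $(1+c_1(\eta))^{\otimes p}$, all of whose degrees lie strictly between $0$ and $2p$. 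Thus the only contribution in the top degree $2p$ is $P(c_1(\eta))=P(e(\eta))$, giving $e(\eta\wr C_p)=P(e(\eta))$.

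The main point that needs care is the compatibility between the real and complex wreath power constructions in the $2$-plane case; once that identification is in hand, the rest is splitting principle plus multiplicativity plus a degree count. I do not anticipate any substantial obstacle.
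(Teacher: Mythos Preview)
Your proposal is correct and follows the same overall architecture as the paper: the generalized splitting principle together with Propositions \ref{P5}--\ref{P7} reduces the claim to oriented $2$-plane bundles, and then multiplicativity of both $e$ and $P$ handles the induction. The only substantive difference lies in the treatment of the $2$-plane base case. The paper argues directly for a real $2$-plane bundle $\xi$: from the structure of $H^\ast(B^p_{hC_p})$ one has $e(\xi\wr C_p)=P(e(\xi))+(\text{monomial in }u,v)$, and the monomial part is killed by restricting along a point $x\hookrightarrow B$, since over $BC_p$ the wreath power becomes the real regular representation, which contains a trivial summand and hence has vanishing Euler class. You instead identify $SO(2)\cong U(1)$, endow $\eta$ with a complex structure, and read off $e(\eta\wr C_p)=c_p(\eta\wr C_p)$ directly from Proposition \ref{P8} by a degree count. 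Your route is slightly more economical, since it recycles the line-bundle Chern class computation rather than rerunning the restriction-to-a-point argument; the paper's version is more self-contained and makes the vanishing of the $u,v$-contribution geometrically transparent via the trivial summand in the regular representation.
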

\begin{proof}
The proof starts at $2$-dimensional bundles $\xi$ which is analogous to  Proposition \ref{P8}. We observe that 
 \[e(\xi\wr C_p)=P(e(\xi))+ \text{ sum of monomials in } u \text{ and } v.
 \]
We form the following diagram for a point $x\in B$
\[
\xymatrix{
\xi_x\wr C_p \ar[rr]\ar[d] & & E^p_{hC_p} \ar[d]\\
BC_p \ar[rr]^{i\wr C_p} & & B^p_{hC_p},\\
}
\]
and notice that the pull-back $(i\wr C_p)^*(\xi\wr C_p)$ is the dimension $2p$-bundle induced by the regular representation of $C_p$. Since the regular representation has a trivial summand, the Euler class is zero, and hence, the monomial part in $u$ and $v$ does not contribute. This proves the Proposition for $2$-dimensional bundles.

We proceed by induction on $n$ where the dimension of the bundle is $2n$. Using the generalised splitting principle described above it suffices to assume the theorem for $\xi$ and prove it for $\xi\oplus L$ where $L$ is a $2$-plane bundle. In this case,
\begin{align*}
e(\xi\oplus L\wr C_p)&=e(\xi\wr C_p)e(L\wr C_p)\\
&=P(e(\xi))P(e(L))\\
&=P(e(\xi)(e(L))\\
&=P(e(\xi\oplus L)),
\end{align*}
which completes the proof.
\end{proof}
\end{mysubsection}

\section{Index computations}\label{indcomp}

In \S \ref{wrpowsp}, the index computation for $F_n^G$ was reduced to computing the pull-back of appropriate characteristic classes (Proposition \ref{indred}).   These pull-backs are the universal characteristic classes for the $p$-fold wreath powers in the notation of \S \ref{wreathcharclass}.  
\begin{mysubsection}{The unitary case}
We denote the $n$-dimensional universal bundle by $\gamma^n_{G}$ over $G(n)$ where $G$ could be $U$ or $SO$. Observe that in the notation of the diagram \eqref{CD1},
  \[BW_n^G \simeq BG(n)^p_{hC_p},~~~i_B^*(\gamma^n_G) = \gamma^n_G \wr C_p. \]
In the spectral sequence for the fibration $G(n)\to EG(n) \to BG(n)$, the cohomology of $G(n)$ is an exterior algebra on transgressive elements, and the image of the transgression are the universal characteristic classes. Using this we can figure out the kernel of $p_1^\ast$ in \eqref{CD1}, which for $G=U$ yields
%
\begin{myeq}\label{Chin}
\Ker(p_1^*) = \Big( c_1(\gamma^n_{U}\wr C_p),\cdots,c_{pn}(\gamma^n_{U}\wr C_p)\Big). 
\end{myeq}
We first simplify the notation to be used in the computations below. 
\begin{notation}\label{zk}
We denote $c_k(\gamma_{U}^n)$ by $c_k$, $p_k(\gamma_{SO}^n)$ by $p_k$, and $e_k(\gamma_{SO}^n)$ by $e_k$. Recall the notation $z(\phi)$ for $\phi\in H^\ast(X)$ from Notation \ref{z}. We denote the degree $k$ part of $z(\phi)$ as $z_k(\phi)$. For a formal sum of cohomology classes $\phi$ we denote the degree $2k$ part of $\phi$ by $[\phi]_k$.
\end{notation}
Our target is to find out the value of $l$ as defined in Proposition \ref{indred}. We start with the easy case where $p\nmid n$ which is done by a direct computation. 
\begin{theorem} \label{relprime}
Suppose that $p\nmid n$. Then the index of $F_n^U$ is given by the formula 
\[\Index_{C_p}(F_n^U)=
(uv^{p-1},v^{p}). \]
\end{theorem}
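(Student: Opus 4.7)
My plan is to apply Proposition \ref{indred}: the index has the form $(uv^{l-1}, v^l)$ or $(v^l)$, and I determine $l$ by finding the minimum-degree element of this form lying in $\Ker(p_1^\ast)$. By \eqref{Chin}, $\Ker(p_1^\ast) \subseteq H^\ast(BW_n^U)$ is the ideal generated by the Chern classes $c_k(\gamma_U^n \wr C_p)$, computed by Theorem \ref{TCh}. I will establish $(uv^{p-1}, v^p) \subseteq \Index$ by a direct computation, and rule out $v^{p-1} \in \Ker(p_1^\ast)$ via restriction along the splitting $C_p \hookrightarrow W_n^U$.

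For the containment, I exploit the observation that in degree $2(p-1)$, the formula in Theorem \ref{TCh} collapses: for each $r \geq 1$ the summand $P(c_r)(1+v^{p-1})^{n-r}$ has minimum degree $2rp \geq 2p > 2(p-1)$ and contributes nothing, so only the $r=0$ term $(1+v^{p-1})^n$ contributes, with degree-$2(p-1)$ part $\binom{n}{1}v^{p-1} = nv^{p-1}$. Hence
\[
c_{p-1}(\gamma_U^n \wr C_p) = nv^{p-1} + z_{2(p-1)}(c).
\]
Since $z_{2(p-1)}(c) \in \Im I$ is annihilated by both $u$ and $v$ (Theorem \ref{cohwrpow}), multiplication gives $u \cdot c_{p-1}(\gamma_U^n \wr C_p) = n\, uv^{p-1}$ and $v \cdot c_{p-1}(\gamma_U^n \wr C_p) = n\, v^p$. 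As $p \nmid n$, the scalar $n$ is invertible modulo $p$, so $uv^{p-1}, v^p \in \Ker(p_1^\ast)$; injectivity of $\pi^\ast$ (Proposition \ref{P4}) places them in $\Index$. Since the presence of $uv^{p-1}$ rules out the form $(v^l)$, the index is $(uv^{l-1}, v^l)$ with $l \leq p$.

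To verify $l = p$, suppose for contradiction $v^{p-1} \in \Ker(p_1^\ast)$, so $v^{p-1} = \alpha\, c_{p-1}(\gamma_U^n \wr C_p) + \sum_{k=1}^{p-2} g_k\, c_k(\gamma_U^n \wr C_p)$ for some $\alpha \in \Z/p$ and $g_k \in H^{2(p-1-k)}(BW_n^U)$ (higher $c_k$ with $k \geq p$ contribute by degree). From Theorem \ref{TCh}, $c_k(\gamma_U^n \wr C_p) = z_{2k}(c) \in \Im I$ for $1 \leq k \leq p-2$ since $(p-1) \nmid k$. Consulting \eqref{Einfty}, the non-$\Im(\pi^\ast)$ part of $g_k$ lies in $\Im I$, and then $g_k z_{2k}(c) \in \Im I$ (using $\Im I \cdot u = \Im I \cdot v = 0$ and $\Im I \cdot \Im I \subseteq \Im I$). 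Separating the $\Im(\pi^\ast)$ and $\Im I$ components of the identity gives $\alpha n = 1$ (so $\alpha = n^{-1}$) and
\[
n^{-1}\, z_{2(p-1)}(c) + \sum_{k=1}^{p-2} g_k\, z_{2k}(c) = 0 \quad\text{in}\quad H^{2(p-1)}(BW_n^U).
\]
Restricting via $Bi: BC_p \hookrightarrow BW_n^U$, the pullback of $\gamma_U^n \wr C_p$ decomposes as $\bigoplus_{i=0}^{p-1}(\C^n \otimes \lambda^i)$ (with $\lambda$ the faithful character of $C_p$ satisfying $c_1(\lambda) = v$), giving total Chern class $\prod_{i=0}^{p-1}(1+iv)^n = (1-v^{p-1})^n$, where $\prod_{i=0}^{p-1}(1+iv) = 1 - v^{p-1}$ in $\F_p[v]$ follows from Fermat's identity $t^p - t = \prod_{k=0}^{p-1}(t-k)$. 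Thus $c_k|_{BC_p} = 0$ for $1 \leq k \leq p-2$, giving $z_{2k}(c)|_{BC_p} = 0$; while $c_{p-1}|_{BC_p} = -nv^{p-1}$ yields $z_{2(p-1)}(c)|_{BC_p} = -nv^{p-1} - nv^{p-1} = -2nv^{p-1}$. Restricting the above identity produces $-2v^{p-1} = 0$, contradicting $p \nmid 2$. Hence $v^{p-1} \notin \Ker(p_1^\ast)$, and $\Index_{C_p}(F_n^U) = (uv^{p-1}, v^p)$.

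The main obstacle is the careful $\Im(\pi^\ast)/\Im I$-decomposition of ideal elements in degree $2(p-1)$: one must verify both (i) that no $P(c_r)$-contributions appear in the relevant intermediate bidegrees (ruled out by $2pr \neq 2(p-1)$ for $r \geq 1$, together with the fact that $2(p-1)$ is not a multiple of $p$), and (ii) that each $g_k$'s non-$\Im(\pi^\ast)$ part lies in $\Im I$, so that products $g_k z_{2k}(c)$ remain in $\Im I$ and are killed under the $BC_p$-restriction.
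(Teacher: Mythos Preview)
Your proof is correct, and the upper bound (showing $(uv^{p-1},v^p)\subseteq\Index$) is exactly the paper's argument, up to the paper's suppressing the coefficient $n$ in $c_{p-1}(\gamma_U^n\wr C_p)=nv^{p-1}+z_{2(p-1)}(c)$. The difference lies in the lower bound. The paper first treats $n=1$ directly: there the $z_{2k}(c)$ for $1\leq k\leq p-1$ are the elementary symmetric polynomials in the classes $1\otimes\cdots\otimes c_1\otimes\cdots\otimes 1$, and one argues that $z_{2(p-1)}(c)$ does not lie in the ideal generated by the lower $z_{2k}(c)$ (a regular-sequence argument). Then for general $n$ the paper invokes the $C_p$-equivariant map $F_1^U\to F_n^U$, $(V_1,\dots,V_p)\mapsto(\oplus_nV_1,\dots,\oplus_nV_p)$, which forces $\Index_{C_p}(F_n^U)\subseteq\Index_{C_p}(F_1^U)=(uv^{p-1},v^p)$. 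Your restriction along the section $BC_p\hookrightarrow BW_n^U$ is a genuinely different device: it handles all $n$ with $p\nmid n$ uniformly, avoids the algebraic-independence verification, and yields the crisp numerical contradiction $-2v^{p-1}=0$. The paper's equivariant-map trick, on the other hand, is what gets reused later in Theorem~\ref{Findex} to reduce the case $n=p^aq$ to $n=p^a$, so it is worth knowing both. One minor slip: your parenthetical ``higher $c_k$ with $k\geq p$ contribute by degree'' should read ``do not contribute, by degree''.
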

\begin{proof}
First we consider the case for $n=1$. Using the equation \eqref{chernwreath}, we compute the Chern classes of $p$-th wreath power of $\gamma_U^n$ as
 \[
c(\gamma_{U}^n\wr C_p)=\sum_{0\leq r\leq n}P(c_r)(1+v^{p-1})^{n-r}+z(c),
\]
where $c=1+c_1$. Therefore, the $i$-th Chern classes are 
\begin{align*}
&c_1(\gamma_{U}^1\wr C_p) =I(c_1\otimes 1 \otimes \cdots \otimes 1),\\
&c_2(\gamma_{U}^1\wr C_p) =\sum I(c_1\otimes 1 \otimes \cdots \otimes c_1 \otimes 1 \otimes \cdots \otimes 1) ,\\
&\cdots,\\
&c_{p-1}(\gamma_{U}^1\wr C_p) = v^{p-1}+z_{2(p-1)}(c),\\
&\cdots.
\end{align*}
Multiplying $c_{p-1}$ respectively by $u$ and $v$ we get $uc_{p-1}=uv^{p-1}$ and $vc_{p-1}=v^p$. Here we are using the fact that multiplying $z_k(c)$ with $u$ and $v$ yields zero. Note that $z_{2(p-1)}(c)$ is not zero modulo $z_{2i}(c)$ for $i\leq p-2$ by using the fact that the elementary symmetric polynomials are algebraically independent and the expressions of Notation \ref{z}. 
Therefore, $p-1$ is the lowest among $l$ such that $\pi^*(uv^{l-1})$ and $\pi^*(v^{l})$ are in $\Ker(p_1^*)\cap \Im(\pi^*)$. So by Proposition \ref{indred} we get $\Index_{C_p}(F_1^U)$ is  $(uv^{p-1},v^p)$. 

Now we turn to the case when $n\neq 1$. We again compute the Chern classes,
\begin{align*}
&c_1(\gamma_{U}^n\wr C_p) =z_2(c),\\
&c_2(\gamma_{U}^n\wr C_p) =z_4(c),\\
&\cdots,\\
&c_{p-1}(\gamma_{U}^n\wr C_p) = v^{p-1}+z_{2(p-1)}(c),\\
&\cdots.
\end{align*}
Proceeding as for the case $n=1$, we see that $\pi^*(uv^{p-1})$ and $\pi^*(v^p)$ are in $\Ker(p_1^*)\cap \Im(\pi^*)$. We will show this is the smallest degree term in $\Ker(p_1^*)\cap \Im(\pi^*)$. Observe that there is a $C_p$-equivariant map \[i: F_1^U\to F_n^U\]\[(V_1,V_2, \cdots, V_p)\mapsto (\oplus_n V_1, \cdots,  \oplus_n V_p). \] This gives that $\Index_{C_p}(F_n^U)\subset \Index_{C_p}(F_1^U)=(uv^{p-1},v^p)$ implying the theorem.
\end{proof}

The following proposition serves as a key step in determining $l$ such that $uv^{l-1}$ or $v^l$ are $0$ in the quotient algebra $H^\ast(BW_n^U)/\Ker(p_1^\ast)$ in the case $p\mid n$. 

\begin{prop}\label{5p}
Suppose that $p\mid n$ and write $n=p^aq$ for $a\geq 1$ and $p\nmid q$. Then for every $1\leq k\leq p^{a+1}-1$, the relation $c_k(\gamma^n_{U}\wr C_p)=0$ in the quotient algebra 
\[H^*(BW_n^U)/\Big( c_1(\gamma^n_{U}\wr C_p),\cdots,c_{k-1}(\gamma^n_{U}\wr C_p) \Big)\] 
is equivalent to the following relations:
\vspace{.5cm}
\begin{compactenum}[\rm (i)]
\item\label{5p1} If $p\nmid k$ and $1\leq k\leq p^{a+1}-2$, then 
\begin{myeq}\label{ind1}
z_{2k}(c)=0.
\end{myeq}
\item\label{5p2} If $p\mid k$ and $k\not\in \{p^{a+1}-p^a,\cdots, p^{a+1}-p\}$, then 
\begin{myeq}\label{ind2}
P(c_{\frac{k}{p}})+z_{2k}(c)=0.
\end{myeq}
\item\label{5p3} If $k=p^{a+1}-p^{m+1}$, where $m\in \{0,1\cdots,a-1\}$, then
\begin{myeq}\label{ind3}
P(c_{p^a-p^m})+\alpha_m v^{p^{a+1}-p^{m+1}}+z_{2k}(c)=0, \quad
\text{where $\alpha_m\in \Z/p^{\times}$.}
\end{myeq}
\item\label{5p4} If $k=p^{a+1}-1$, then
\begin{myeq}\label{ind4}
v^{p-1}P(c_{p^a-1})+z_{2(p^{a+1}-1)}(c)=0. 
\end{myeq}
\end{compactenum}
\end{prop}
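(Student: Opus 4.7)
The plan is to expand $c_k(\gamma^n_U \wr C_p)$ via Theorem \ref{TCh} and then simplify inductively inside the quotient algebra. That theorem yields
\[
c_k(\gamma^n_U \wr C_p) \;=\; \sum_{\substack{r, j \geq 0,\; r + j \leq n \\ pr + (p-1)j \,=\, k}} \binom{n-r}{j}\, P(c_r)\, v^{(p-1)j} \;+\; z_{2k}(c),
\]
since the degree-$2k$ part of $P(c_r)(1+v^{p-1})^{n-r}$ receives contributions precisely from pairs $(r,j)$ with $pr + (p-1)j = k$. I would argue by strong induction on $k$, so that for each index $k' < k$ exactly one of \eqref{ind1}--\eqref{ind4} is already available. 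By \eqref{Einfty} every element in the image of $I$ annihilates both $u$ and $v$, so substituting the inductive expressions $P(c_s) \equiv -z_{2ps}(c)$ (from \eqref{ind2}) or $P(c_s) \equiv -\alpha_{m'} v^{ps} - z_{2ps}(c)$ (from \eqref{ind3}) into a product $P(c_s)\, v^{(p-1)j}$ with $j \geq 1$ collapses it either to zero or to a pure $v$-power, and the $j = 0$ terms are simply kept intact.

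The bulk of the calculation is a Lucas-theorem accounting of which coefficients $\binom{n-r}{j}$ survive modulo $p$. Writing $n = p^a q$ with $p \nmid q$, the base-$p$ expansion of $n$ has zeros in positions $0, \ldots, a-1$, so the condition $\binom{n-r}{j} \not\equiv 0 \pmod{p}$ forces strong $p$-adic restrictions on the digits of $j$ relative to those of $n-r$. In case \eqref{ind1}, the only possible direct pure $v$-power contribution is $\binom{n}{k/(p-1)} v^k$ arising when $(p-1) \mid k$, and Lucas forces it to vanish in the range $k \leq p^{a+1}-2$; the $v$-power contributions assembled from substituting the other $P(c_r)$ vanish for the same reason. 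In case \eqref{ind2}, the pair $(r,j) = (k/p, 0)$ furnishes the surviving $P(c_{k/p})$ while all other pairs yield binomials $\equiv 0 \pmod{p}$. In case \eqref{ind4}, only $(r,j) = (p^a-1, 1)$ contributes directly, with binomial $\binom{p^a q - p^a + 1}{1} = p^a(q-1) + 1 \equiv 1 \pmod{p}$, and the relation is kept in the unreduced form $v^{p-1} P(c_{p^a-1}) + z_{2(p^{a+1}-1)}(c) = 0$.

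The hardest case, and the main obstacle, is \eqref{ind3}. The direct $r=0$ coefficient $\binom{n}{k/(p-1)}$ already vanishes mod $p$ whenever $m < a-1$, since $k/(p-1) = p^{m+1} + p^{m+2} + \cdots + p^a$ carries a $1$-digit in position $m+1 < a$ where $n$ has a $0$-digit. The promised unit $\alpha_m \in (\Z/p)^\times$ must therefore emerge entirely from inductively substituting earlier \eqref{ind3}-relations at indices $m' < m$ into terms $\binom{n - (p^a - p^{m'})}{j_{m'}}\, P(c_{p^a - p^{m'}})\, v^{(p-1)j_{m'}}$. Unwinding the recursion, $\alpha_m$ becomes an alternating sum of products of the earlier $\alpha_{m'}$ with binomials $\binom{p^a q - (p^a - p^{m'})}{\,\cdot\,}$, and showing that this sum is a unit modulo $p$ will require one further Lucas-type computation combined with an induction on $m$; the distinction between $q = 1$ and $q > 1$ enters only through the top base-$p$ digit of $n$. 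Once this nonvanishing is verified, all four cases close simultaneously and the proposition follows.
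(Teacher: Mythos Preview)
Your approach is essentially the paper's: expand via Theorem \ref{TCh}, induct on $k$, and use Lucas' theorem to kill the unwanted binomials. Cases \eqref{5p1}, \eqref{5p2}, and \eqref{5p4} are handled exactly as in the paper, and your description of them is accurate.

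The genuine gap is in case \eqref{5p3}. You have the direction of the recursion reversed. Since $k = p^{a+1} - p^{m+1}$, the values of $k$ of type \eqref{5p3} increase as $m$ \emph{decreases}; hence the ``earlier'' \eqref{ind3}-relations (those with $k' < k$, already available by the strong induction on $k$) are the ones with $m' > m$, not $m' < m$. In particular the base case of the \eqref{5p3}-recursion is $m = a-1$, where the direct $r = 0$ term $(1+v^{p-1})^n$ contributes $\binom{p^a q}{p^a}\,v^{(p-1)p^a}$, and by Lucas this equals the bottom $p$-adic digit of $q$, a unit since $p\nmid q$. Your proposal, taken literally, would start at $m=0$, where you correctly observe the direct $r=0$ term vanishes and where there would be no earlier \eqref{5p3}-relations to substitute, leaving $\alpha_0 = 0$ --- a contradiction.

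Once the direction is corrected, the recursion is also much simpler than the ``alternating sum of products'' you anticipate. For $k = p^{a+1} - p^{m+1}$ with $m < a-1$, substituting \eqref{ind3} for $P(c_{p^a - p^s})$ with $s > m$ yields a $v^k$-term with coefficient $-\alpha_s\binom{p^a q - p^a + p^s}{(p^{s+1}-p^{m+1})/(p-1)}$; since $(p^{s+1}-p^{m+1})/(p-1) = p^{m+1} + \cdots + p^s$ has a nonzero digit in position $m+1$ while $p^a q - p^a + p^s$ has zero digits in positions $0,\dots,s-1$, Lucas kills every term except $s = m+1$, where the binomial is $\binom{p^a q - p^a + p^{m+1}}{p^{m+1}}$, again a unit by Lucas. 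So $\alpha_m$ is a unit multiple of $\alpha_{m+1}$, and the invertibility propagates down from $\alpha_{a-1}$ with no further work. (The $q=1$ versus $q>1$ distinction you mention is not relevant to this proposition; it only appears later in the real case.)
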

\begin{proof}
The proof is analogous to \cite[Proposition 6.1]{BBKV18}. We proceed by induction on $k$ using the equation \eqref{chernwreath}
\begin{myeq}\label{chwr}
c(\gamma_{U}^n\wr C_p)=\sum_{0\leq r\leq n}P(c_r)(1+v^{p-1})^{n-r}+z(c).
\end{myeq} 
This implies 
\[
\begin{aligned}
c_k(\gamma_{U}^n\wr C_p)&=\Big[\sum_{0\leq r\leq n}P(c_r)(1+v^{p-1})^{n-r}+z(c)\Big]_k\\
                                         &= \sum_{0\leq r\leq n}\Big[ P(c_r)(1+v^{p-1})^{n-r}\Big]_k+z_{2k}(c).
\end{aligned}
\]
This already explains the terms $z_{2k}(c)$ occurring in the proposition. We show by induction that for $k\leq p^{a+1}-1$, $\sum_{0\leq r\leq n}\Big[ P(c_r)(1+v^{p-1})^{n-r}\Big]_k$ equals the other terms claimed in the four parts of the proposition modulo $c_i(\gamma_U^n \wr C_p)$ for $i\leq k-1$.  

Start with the case $p\nmid k$ and $k\leq (p-1)p^a-1$. As $p$ is an odd prime, $v^{p-1}$ is in degree $2(p-1)\geq 4$, so that the proposition is clear when $k=1$.  Now we compute the degree $2k$ part of \eqref{chwr}. We note that   
\begin{myeq} \label{1term}
\begin{aligned}
\Big[ (1+v^{p-1})^n \Big]_{k}&=\Big[ (1+v^{p-1})^{p^aq}\Big]_k\\
 &=\Big[(1+v^{(p-1)p^{a}})^q \Big]_k\\
 &= \sum_{j=0}^{q} \Big[\binom{q}{j}(v^{(p-1)p^a})^j \Big]_k
\end{aligned}
\end{myeq}
which does not have any degree $2k$ part as $(p-1)p^a >k$. Now examine $(1+v^{p-1})^{n-r}P(c_r)$ for each $r\geq 1$. In order to contribute to the degree $2k$ part of \eqref{chwr}, we must have $rp<k$, so that we may apply \eqref{ind2} and replace $P(c_r)$ with $-z_{2rp}(c)$. Consequently,
\[
(1+v^{p-1})^{n-r}P(c_r)= -(1+v^{p-1})^{n-r}z_{2rp}(c)
= -z_{2rp}(c) \quad \text{by \eqref{Einfty},}
\]
which does not contribute in degree $2k$, proving \eqref{5p1} in this case.

Now suppose $p\mid k$ and $k\leq (p-1)p^a -1$. An entirely analogous argument as above shows that the 
degree $2k$ part of $(1+v^{p-1})^{n-r}P(c_r)$ may be non-zero only when $r=k/p$. In this case 
\[
\Big[(1+v^{p-1})^{n-\frac{k}{p}}P(c_{\frac{k}{p}}) \Big]_{k} = P(c_{\frac{k}{p}}) \]
which proves \eqref{5p2} in this case. 
  
Now consider $k=(p-1)p^a = p^{a+1} - p^a$. Now \eqref{1term} changes to 
\[ 
\begin{aligned}
\Big[ (1+v^{p-1})^n \Big]_{k}
 &= \sum_{j=0}^{q} \Big[\binom{q}{j}(v^{(p-1)p^a})^j \Big]_k \\
&= qv^k = \alpha_{a-1} v^k,
\end{aligned}
\]
where $\alpha_{a-1} \equiv q \pmod{p}$. The rest of the argument proceeds as above for the case \eqref{5p2} to imply \eqref{5p3} for $m=a-1$. 

Next we prove \eqref{5p1} for $(p-1)p^a < k < p^{a+1}-1$. The equation \eqref{1term} still does not yield any term in degree $2k$, as $p\nmid k$. Also unless $rp$ is one of $p^{a+1}-p^{m+1}$ for $0\leq m \leq a-1$, the term $P(c_r)(1+v^{p-1})^{n-r}$ does not contribute in degree $k$ as in the cases above. Finally if  $rp=p^{a+1}-p^{m+1}$ where $0\leq m\leq a-1$, we may apply \eqref{ind3}
to obtain 
\begin{myeq}\label{degklarge}
\begin{aligned}
 \Big[ P(c_r)(1+v^{p-1})^{n-r} \Big]_k &= \Big[(-z_{rp}(c)-\alpha_m v^{rp})(1+v^{p-1})^{n-r}\Big]_k\\
 &= \Big[-z_{rp}(c)-\alpha_m v^{rp}(1+v^{p-1})^{n-r}\Big]_k\\
&= \begin{cases} -\alpha_m \binom{n-r}{\frac{k-rp}{p-1}} v^k & \mbox{ if } p-1\mid k-rp\\
 0 & \mbox{ if } p-1\nmid k-rp. \end{cases}
\end{aligned}
\end{myeq}
Hence, we are done unless $(p-1)\mid k-rp$. Now suppose that $k=rp+\lambda (p-1)$. Observe that $n-r=p^aq-p^a+p^m$. This implies $p^m\mid n-r$. On the other hand, $p\nmid k$ implies that $p\nmid \lambda$. Therefore, $\binom{n-r}{\lambda}=0$ unless $m=0$. But this means that 
\[
k-rp < p^{a+1}-1 - (p^{a+1}-p ) < p-1.\]
Thus, we end up with \eqref{ind1}. 

If $p\mid k$ for $(p-1)p^a < k < p^{a+1}-1$, and $k\neq p^{a+1}-p^{m+1}$ for $0\leq m \leq a-1$, we work out that \eqref{1term} does not have any elements in degree $2k$ as there are no divisors of $(p-1)p^a$ in the given range of values of $k$. Proceeding as in the previous case, we only need to figure out the degree $2k$ part of $P(c_r)(1+v^{p-1})^{n-r}$ where $rp<k$ is of the form $p^{a+1}-p^{m+1}$ for $0\leq m \leq a-1$. The formula \eqref{degklarge} still holds, and here $\lambda=\frac{k-rp}{p-1}$ is divisible by $p$. However $p^m \mid n-r$ implies $\lambda = j\cdot p^m$. If $j=1$, 
\[
k=rp+p^m(p-1)= p^{a+1}-p^{m+1} + p^{m+1}-p^m= p^{a+1}-p^m,\]
which contradicts the assumption on $k$, and if $j\geq 2$, 
\[
k=p^{a+1}-p^m+p^m(p-1)(j-1)>p^{a+1}-2. \]
This completes the proof of \eqref{5p2}. 

Now we consider $k=p^{a+1}-p^{m+1}$, where $0\leq m \leq a-1$, and note that $\frac{k}{p}=p^a-p^m$. We have already completed the calculation for $m=a-1$. For the remaining, we check that \eqref{1term} does not contribute in degree $2k$ as $(p-1)p^a \nmid k$. As in the above cases, the contributions in degree $2k$ apart from $z_{2k}(c)$ and $P(c_{p^a-p^m})$ may only come from terms $P(c_r)(1+v^{p-1})^{n-r}$ for $r=p^a-p^s$ where $a-1\geq s>m$. In this case, 
\begin{align*}
 \Big[P(c_r)(1+v^{p-1})^{n-r}\Big]_k &= \Big[(-z_{2rp}(c)-\alpha_s v^{rp})(1+v^{p-1})^{n-r}\Big]_k\\
 &= \Big[-z_{2(p^{a+1}-p^{s+1})}(c)-\alpha_s v^{p^{a+1}-p^{s+1}}(1+v^{p-1})^{p^aq-p^a + p^s}\Big]_k \\
&= -\alpha_s \binom{p^a q -p^a+p^s}{\frac{p^{s+1}-p^{m+1}}{p-1}}v^k.
\end{align*}
Note that $p^s$ divides $p^a q -p^a+p^s$, while the highest power of $p$ dividing $\frac{p^{s+1}-p^{m+1}}{p-1}$ is $p^{m+1}$. Therefore, this is $0$ unless $s=m+1$, in which case the binomial coefficient is $\binom{p^a q -p^a+p^{m+1}}{p^{m+1}}$ which is $\not\equiv 0 \pmod{p}$ by Lucas'
theorem \cite{Lu78}.

Finally, suppose $k=p^{a+1}-1$. As in the above cases other than $z_{2k}(c)$, the only contributing terms in degree $2k$ in \eqref{chwr} may arise in $P(c_r)(1+v^{p-1})^{n-r}$ for $rp=p^{a+1}-p^{m+1}$ ($0\leq m\leq a-1$). As in the cases above, we have 
\begin{align*}
 \Big[P(c_r)(1+v^{p-1})^{n-r}\Big]_{p^{a+1}-1} &= \Big[(-z_{2rp}(c)-\alpha_m v^{rp})(1+v^{p-1})^{n-r}\Big]_{p^{a+1}-1}\\
 &= \Big[-z_{2(p^{a+1}-p^{m+1})}(c)-\alpha_m v^{p^{a+1}-p^{m+1}}(1+v^{p-1})^{p^aq-p^a + p^m}\Big]_{p^{a+1}-1} \\
&= -\alpha_m \binom{p^a q -p^a+p^m}{\frac{p^{m+1}-1}{p-1}}v^{p^{a+1}-1},
\end{align*}
which is $0$ unless $m=0$, as $p\mid p^a q -p^a+p^m$ but $p\nmid \frac{p^{m+1}-1}{p-1}$. At $m=0$, we have
\begin{align*}
 \Big[P(c_{p^a -1})(1+v^{p-1})^{n-p^a+1}\Big]_{p^{a+1}-1} 
&= (p^a(q-1)+1)P(c_{p^a-1})v^{p-1}\\
 &= P(c_{p^a-1})v^{p-1}.
\end{align*}
This implies \eqref{ind4} and completes the proof. 
\end{proof}

The computations of Proposition \ref{5p} allow us to deduce 
\begin{cor}\label{5c1}
Let $n=p^aq$, where $p \nmid q$. Then in the quotient algebra $H^*(BW_n^U)/\Big( c_1(\gamma^n_{U}\wr C_p),\cdots,c_{p^{a+1}-1}(\gamma^n_{U}\wr C_p) \Big)$ we have the following two relations. \[uv^{p^{a+1}-1}=0,\]\[v^{p^{a+1}}=0.\]
\end{cor}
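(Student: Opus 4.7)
My plan is to combine two specific relations from Proposition \ref{5p} — the $m=0$ instance of \eqref{5p3} and the relation \eqref{5p4} — eliminate the common characteristic class $P(c_{p^{a}-1})$, and then multiply by $u$ and by $v$ to kill the residual $z$-terms using the annihilation identities from \eqref{Einfty}.

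Concretely, since our quotient kills $c_k(\gamma^n_U \wr C_p)$ for all $k$ up to $p^{a+1}-1$, both of the following relations (furnished by Proposition \ref{5p}) hold simultaneously in the quotient:
\[ P(c_{p^a-1}) + \alpha_0\, v^{p^{a+1}-p} + z_{2(p^{a+1}-p)}(c) \;=\; 0, \]
\[ v^{p-1} P(c_{p^a-1}) + z_{2(p^{a+1}-1)}(c) \;=\; 0. \]
Using the first to substitute for $P(c_{p^a-1})$ in the second, the cross term $v^{p-1}\cdot z_{2(p^{a+1}-p)}(c)$ vanishes because $z$-terms lie in the image of $I$, which is annihilated by $v$ (equation \eqref{Einfty}). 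What remains is
\[ -\alpha_0\, v^{p^{a+1}-1} + z_{2(p^{a+1}-1)}(c) \;=\; 0 \]
in the quotient. Multiplying this relation once more by $u$, respectively by $v$, annihilates the $z$-term by the same principle and leaves $\alpha_0\, u v^{p^{a+1}-1} = 0$ and $\alpha_0\, v^{p^{a+1}} = 0$. Since $\alpha_0 \in (\Z/p)^{\times}$, both conclusions of the corollary follow.

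I do not expect any real obstacle: the substantive work is already accomplished in Proposition \ref{5p}, and the corollary is simply the algebraic extraction of its content using the structural properties of $H^\ast(X^p_{hC_p})$. The only minor check is that both indices $k = p^{a+1}-p$ and $k = p^{a+1}-1$ lie within the range of Chern classes being killed in the quotient, which is immediate. (The boundary case $a=0$, where the index $m=0$ of \eqref{5p3} is unavailable, is already subsumed by Theorem \ref{relprime}, whose proof shows directly that $c_{p-1}(\gamma^n_U \wr C_p) \equiv n\, v^{p-1} + z_{2(p-1)}(c)$, and then multiplication by $u$ and $v$ finishes that case in the same spirit.)
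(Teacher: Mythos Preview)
Your proposal is correct and follows essentially the same route as the paper: it combines the $m=0$ instance of \eqref{ind3} with \eqref{ind4}, eliminates $P(c_{p^a-1})$ (the paper phrases this as ``multiply \eqref{cor2} by $v^{p-1}$ and subtract from \eqref{cor1}'', which is algebraically the same as your substitution), and then multiplies by $u$ and $v$ using the annihilation properties of the image of $I$. Your remark on the boundary case $a=0$ is a harmless addendum; the paper treats that case separately via Theorem~\ref{relprime} and implicitly reads Corollary~\ref{5c1} under the hypothesis $a\geq 1$ inherited from Proposition~\ref{5p}.
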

\begin{proof}
From the Proposition  \eqref{5p}\eqref{5p3} and \eqref{5p}\eqref{5p4} we have following two equations.
\begin{myeq}\label{cor1}
 v^{p-1}P(c_{p^a-1})+z_{2(p^{a+1}-1)}(c)=0
\end{myeq}
\begin{myeq}\label{cor2}
P(c_{p^a-1})+\alpha_0 v^{p^{a+1}-p}+z_{2(p^{a+1}-p)}(c)=0
\end{myeq}
where $\alpha_0\in \Z/p^\times$. Multiplying \eqref{cor2} with $v^{p-1}$ and then subtracting from  \eqref{cor1} we get 
\begin{myeq}\label{cor3}
\alpha_0 v^{p^{a+1}-1}-z_{2(p^{a+1}-1)}(c)=0.
\end{myeq}
Multiplying the above equation further with $u$ and $v$ gives our desired result. Here we are using the fact $u\cdot z_k(c)=0$ and $v\cdot z_k(c)=0$.
\end{proof}
We are now in a position to compute the index in the case $G=U$. 
\begin{thm} \label{Findex}
Suppose $n=p^a q$ for $a\geq 1$, and $p\nmid q$. Then,
\[\Index_{C_p}(F^U_n)=( uv^{(p^{a+1}-1)}, v^{p^{a+1}}).\]
\end{thm}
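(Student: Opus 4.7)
The plan is to prove the claimed equality by the two containments. The containment $(uv^{p^{a+1}-1},v^{p^{a+1}})\subseteq \Index_{C_p}(F_n^U)$ is immediate from Corollary \ref{5c1}, which shows that both classes already vanish in the quotient of $H^\ast(BW_n^U)$ by the subideal generated by the first $p^{a+1}-1$ Chern classes of $\gamma_U^n\wr C_p$; this subideal is contained in $\Ker(p_1^\ast)$, so these classes also vanish in $H^\ast((F_n^U)_{hC_p})$ and hence lie in the index via diagram \eqref{CD1}.

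For the reverse containment, Proposition \ref{indred} guarantees the index has the form $(uv^{l-1},v^l)$ or $(v^l)$, so it suffices to check that both $\pi^\ast(uv^{p^{a+1}-2})$ and $\pi^\ast(v^{p^{a+1}-1})$ are nonzero in $H^\ast(BW_n^U)/\Ker(p_1^\ast)$. A helpful observation: because each $c_k(\gamma_U^n\wr C_p)$ with $k\geq p^{a+1}$ lives in degree $\geq 2p^{a+1}$, any element of degree strictly less than $2p^{a+1}$ that lies in $\Ker(p_1^\ast)$ already lies in the subideal generated by $c_1(\gamma_U^n\wr C_p),\ldots,c_{p^{a+1}-1}(\gamma_U^n\wr C_p)$. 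Proposition \ref{5p} then provides an exhaustive list of the relations one must contend with.

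The non-vanishing of $v^{p^{a+1}-1}$ is essentially already in hand: equation \eqref{cor3} inside the proof of Corollary \ref{5c1} gives $\alpha_0 v^{p^{a+1}-1}=z_{2(p^{a+1}-1)}(c)$ in the quotient, and the right-hand side is a nontrivial class in the image of $I$ (by Notation \ref{z} and \eqref{Einfty}). For $uv^{p^{a+1}-2}$, I would multiply each family (i)--(iv) of Proposition \ref{5p} by $u$ and use $u\cdot z_{2k}(c)=0$; the system collapses to $uP(c_j)=0$ for $j\notin\{p^a-p^m:\,0\leq m\leq a-1\}$, the identities $uP(c_{p^a-p^m})=-\alpha_m\, uv^{p^{a+1}-p^{m+1}}$ for $0\leq m\leq a-1$, and $v^{p-1}\cdot uP(c_{p^a-1})=0$. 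Chaining these yields exactly $uv^{p^{a+1}-1}=0$ and no other $uv^k$ is forced to vanish, since $p^{a+1}-2$ is not of the form $p^{a+1}-p^{m+1}$ for an odd prime $p$.

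The main obstacle is rigorously ruling out any hidden linear combination of the $u$-multiplied relations that would also kill $uv^{p^{a+1}-2}$. My preferred completion is a Poincar\'e series / $\Z/p$-rank comparison in degree $2p^{a+1}-3$: compute $\dim_{\Z/p} H^{2p^{a+1}-3}((F_n^U)_{hC_p})$ from the Serre spectral sequence for $F_n^U\to (F_n^U)_{hC_p}\to BC_p$ using Theorem \ref{T2}, and check that the dimension matches what the predicted index $(uv^{p^{a+1}-1},v^{p^{a+1}})$ demands. This forces $\pi^\ast(uv^{p^{a+1}-2})$ to be nonzero in the quotient, completing the proof.
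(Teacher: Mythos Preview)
Your overall architecture is right, and your degree observation reducing to the subideal generated by $c_1(\gamma_U^n\wr C_p),\ldots,c_{p^{a+1}-1}(\gamma_U^n\wr C_p)$ is correct and useful. However, the crucial nonvanishing step is not actually established. You write that $\alpha_0 v^{p^{a+1}-1}=z_{2(p^{a+1}-1)}(c)$ in the quotient and that ``the right-hand side is a nontrivial class in the image of $I$.'' Notation~\ref{z} and \eqref{Einfty} only tell you that $z_{2(p^{a+1}-1)}(c)$ is nonzero in $H^\ast(BW_n^U)$; they say nothing about whether it survives in the quotient by $\big(c_1(\gamma_U^n\wr C_p),\ldots,c_{p^{a+1}-1}(\gamma_U^n\wr C_p)\big)$. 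Every one of the relations in Proposition~\ref{5p} contains a term $z_{2k}(c)$, and products of these with $P$-classes and with one another can in principle produce further $I$-type classes in degree $2(p^{a+1}-1)$. You have not ruled this out, and your proposed Poincar\'e-series fix is only a plan: carrying it out would require closed-form control of $\dim_{\Z/p}H^{2p^{a+1}-3}(F_n^U)$ for arbitrary $n=p^aq$, which you have not supplied.

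The paper closes this gap by a different and much shorter route. First, it reduces the lower bound on the index to the case $q=1$ via the $C_p$-equivariant map $F_{p^a}^U\to F_{p^aq}^U$, $(V_1,\ldots,V_p)\mapsto(\oplus_q V_1,\ldots,\oplus_q V_p)$, which forces $\Index_{C_p}(F_{p^aq}^U)\subset\Index_{C_p}(F_{p^a}^U)$. Second, for $n=p^a$ the class $z_{2(p^{a+1}-1)}(c)$ collapses to a \emph{single} $I$-term, namely $I(c_{p^a-1}\otimes c_{p^a}\otimes\cdots\otimes c_{p^a})$, because the only way to write $p^{a+1}-1$ as $\sum_{i=1}^p q_i$ with $0\le q_i\le p^a$ is (up to cyclic permutation) $(p^a-1,p^a,\ldots,p^a)$. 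One then enlarges the Chern-class ideal to an ideal $J$ that also contains every $I(c_{q_1}\otimes\cdots\otimes c_{q_p})$ with $\sum q_i\le p^{a+1}-p$, and observes that this particular $I$-term, having $\sum q_i=p^{a+1}-1$, is visibly not in $J$. This is what your argument is missing. Incidentally, once $v^{p^{a+1}-1}\ne 0$ is established, Proposition~\ref{indred} together with $uv^{p^{a+1}-1}=0$ already pins down the index; the separate verification of $uv^{p^{a+1}-2}\ne 0$ you attempt is unnecessary.
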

\begin{proof}
Let us first prove the theorem for the case $p^a$. From Proposition \ref{indred}, we know that the index is $( uv^{k-1}, v^k)$ where $\pi^*(uv^{k-1})$ or $\pi^*(v^{k})$ is the lowest degree non-zero element in 
\[
H^*(BW_n^U)/\Big( c_1(\gamma^n_{U}\wr C_p),\cdots,c_{pn}(\gamma^n_{U}\wr C_p)\Big) \cap \Im(\pi^*).
\]
By corollary \eqref{5c1} we already have $uv^{p^{a+1}-1}=0$ and $v^{p^{a+1}}=0$. If we can show 
\[v^{p^{a+1}-1}\not\in \Big( c_1(\gamma^n_{U}\wr C_p),\cdots,c_{pn}(\gamma^n_{U}\wr C_p)\Big)\] 
and
\[uv^{p^{a+1}-2}\not\in \Big( c_1(\gamma^n_{U}\wr C_p),\cdots,c_{pn}(\gamma^n_{U}\wr C_p)\Big)\] 
we are done. 
 Let us take a bigger ideal 
\[J=\Big( \{c_1(\gamma^n_{U}\wr C_p),\cdots,c_{pn}(\gamma^n_{U}\wr C_p)\}\cup \{I(c_{q_1}\otimes c_{q_2}\cdots\otimes c_{q_{p}})\mid 1 \leq \sum_{1}^{p} q_i \leq p^{a+1}-p\} \Big)\] where $I$ is as described in \eqref{E2}
and show $v^{p^{a+1}-1}$ is not in $J$. 
Now \eqref{cor3} gives
\[v^{p^{a+1}-1}=\alpha^{\prime}z_{2(p^{a+1}-1)}(c),\]
where $\alpha^\prime = \alpha_0^{-1}$. Expressing $z_{2(p^{a+1}-1)}$ in terms of $I$,
\begin{align*}z_{2(p^{a+1}-1)}&=\sum_{\sum q_i=p^{a+1}-1} c_{q_1}\otimes \cdots \otimes c_{q_p}\\
&=I(c_{p^a-1}\otimes c_{p^a}\cdots\otimes c_{p^a}).
\end{align*} which is non-zero in $H^*(BW_n^U)/J$. Thus we have proved $v^{p^{a+1}-1}\neq 0$ in $H^*(BW_n^U)/\Big( c_1(\gamma^n_{U}\wr C_p),\cdots,c_{pn}(\gamma^n_{U}\wr C_p)\Big)$ proving the theorem for $n=p^a$.\\
For the general case $n=p^aq$ where $p\nmid q$, consider the $C_p$-equivariant map \[F_{p^a}^U\to F_{p^aq}^U\]
\[(V_1,V_2,\cdots,V_p)\mapsto (\oplus_{q}V_1,\cdots,\oplus_{q}V_p).\] The existence of such $C_p$-equivariant map will guarantee $\Index_{C_p}F_{p^aq}^U\subset \Index_{C_p}F_{p^a}^U$. Thus by Corrollary \eqref{5c1} we conclude the theorem.
\end{proof}
\end{mysubsection}

\begin{mysubsection}{The real case}
The computation of $\Index_{C_p}F_n^{SO}$ is analogous to its complex counterpart $\Index_{C_p}F_n^{U}$. Observe that for $G=SO$ the kernel of $p_1^\ast$ in \eqref{CD1} is
\begin{myeq}\label{ptin}
 \Ker(p_1^*)\cong 
 \begin{cases} \Big(p_1(\gamma^n_{SO}\wr C_p),\cdots,p_{\frac{pn}{2}}(\gamma^n_{SO}\wr C_p),e_{pn}(\gamma^n_{SO}\wr C_p)\Big) & \text{if $n$ is even},\\
 \Big( p_1(\gamma^n_{SO}\wr C_p),\cdots,p_{\frac{pn}{2}-1}(\gamma^n_{SO}\wr C_p)\Big) & \text{if $n$ is odd.}
\end{cases}
\end{myeq}
For a real even dimensional bundle the Euler class appears in the index of its $p$-th wreath power. This is where the computation could have been different from the complex case, however, a simple comparison allows us to prove this case.
\begin{theorem}\label{indeven}
Let $n$ be even and $n=p^a q$ with $a\geq 0$, $p\nmid q$. Then,
\[\Index_{C_p}(F^{SO}_n)=( uv^{(p^{a+1}-1)}, v^{p^{a+1}}).\]
\end{theorem}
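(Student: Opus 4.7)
The plan is to sandwich $\Index_{C_p}(F_n^{SO})$ between the indices of two complex flag manifolds, both of which have already been computed in Theorems \ref{relprime} and \ref{Findex}. Complexification of real orthogonal subspaces gives a $C_p$-equivariant map $F_n^{SO}\to F_n^U$, namely $(V_1,\ldots,V_p)\mapsto(V_1\otimes\C,\ldots,V_p\otimes\C)$, so by naturality of the Fadell-Husseini index,
\[\Index_{C_p}(F_n^U)\subset\Index_{C_p}(F_n^{SO}).\]

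For the reverse containment, I would set $m=n/2$ and construct a $C_p$-equivariant map $F_m^U\to F_n^{SO}$. The crucial numerical point is that, since $p$ is odd and $n=p^aq$ is even with $p\nmid q$, the factor $q$ must be even, so $m=p^a(q/2)$ with $p\nmid q/2$; in particular, $m$ has the same $p$-adic valuation $a$ as $n$. The map itself views a Hermitian-orthogonal tuple $(V_1,\ldots,V_p)$ of complex $m$-planes in $\C^{pm}$, identified with $\R^{pn}$ as a real vector space, as a Euclidean-orthogonal tuple of oriented real $n$-planes in $\R^{pn}$. Euclidean orthogonality follows because the Euclidean inner product equals the real part of the Hermitian product, and each $V_i$ carries a natural orientation induced by its complex structure; concretely, this is the standard inclusion $U(pm)/U(m)^p\hookrightarrow SO(2pm)/SO(2m)^p$, which is clearly $C_p$-equivariant for the cyclic permutation actions. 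Naturality of the index then gives
\[\Index_{C_p}(F_n^{SO})\subset\Index_{C_p}(F_m^U).\]

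Since $n$ and $m$ share the same $p$-adic valuation, both outer indices equal $(uv^{p^{a+1}-1},v^{p^{a+1}})$ by Theorems \ref{relprime} and \ref{Findex}, which closes the sandwich. The only delicate point is the choice $m=n/2$ (rather than any other complex flag manifold) for the second comparison map---this is what forces the upper and lower bounds to match. This comparison trick sidesteps a direct characteristic-class computation via Propositions \ref{pontwreath} and \ref{eulerwreath}, which would otherwise require careful bookkeeping of the extra Euler class relation $e_{pn}(\gamma_{SO}^n\wr C_p)=P(e_n)$ appearing in \eqref{ptin}, and mirrors the remark preceding the theorem that ``a simple comparison'' suffices in the even real case.
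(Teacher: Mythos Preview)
Your proof is correct and follows essentially the same approach as the paper: both sandwich $\Index_{C_p}(F_n^{SO})$ between $\Index_{C_p}(F_n^U)$ and $\Index_{C_p}(F_{n/2}^U)$ via the complexification map $F_n^{SO}\to F_n^U$ and the underlying-real-subspace map $F_{n/2}^U\to F_n^{SO}$. Your write-up in fact supplies more justification than the paper's (the observation that $q$ must be even so that $n/2$ retains $p$-adic valuation $a$, and the verification that Hermitian orthogonality implies Euclidean orthogonality and that the image lands in the oriented flag manifold).
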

\begin{proof}
Consider the maps between flag manifold 
\[F_{\frac{n}{2}}^{U}\to F_{n}^{SO}\to F_{n}^{U}\] 
where the first map is the underlying real subspace of complex subspaces and the second map is the complexification. This will imply \[\Index_{C_p}F_{n}^U\subset \Index_{C_p}F_{n}^{SO}\subset \Index_{C_p}F_{\frac{n}{2}}^U.\]But we already have $\Index_{C_p}F_{n}^U=\Index_{C_p}F_{\frac{n}{2}}^U=(uv^{(p^{a+1}-1)}, v^{p^{a+1}})$ which forces $\Index_{C_p}F_{n}^{SO}$ to be $(uv^{(p^{a+1}-1)}, v^{p^{a+1}})$.
\end{proof}
Now we will turn to the case when $n$ is odd. The proof of Theorem \ref{indeven} already shows that if $n=p^a q$ with $p\nmid q$, 
\[\Index_{C_p}(F^{SO}_n)\subset ( uv^{(p^{a+1}-1)}, v^{p^{a+1}}),\]
using the map $F_n^{SO} \to F_n^U$. First we investigate what happens when $n$ does not involve any non-trivial power of $p$.
\begin{theorem} \label{relprimeso}
Suppose that $p\nmid n$. The index of $F_n^{SO}$ is given by the formula 
\[\Index_{C_p}(F_n^{SO})=\begin{cases} 
(v^{p-1})  & \text{if $n=1$},\\
(uv^{p-1},v^{p}) & \text{if $n> 1$}.\end{cases}\]
\end{theorem}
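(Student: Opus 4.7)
My plan is to split the argument into cases $n=1$ and $n>1$, using Proposition \ref{indred} to rewrite the index in terms of characteristic classes and Proposition \ref{pontwreath} to compute them. For $n=1$, the subgroup $W_1^{SO}=SO(1)^p\rtimes C_p$ degenerates to $C_p$, so $BW_1^{SO}=BC_p$ and the map $\pi$ in diagram \eqref{CD1} is the identity. Thus $\Index_{C_p}(F_1^{SO})=\Ker(p_1^\ast)$ equals the ideal generated by the Pontrjagin classes of $\gamma^1_{SO}\wr C_p$, with no Euler class contribution since $pn=p$ is odd. As $\gamma^1_{SO}$ carries no positive-degree Pontrjagin classes, Proposition \ref{pontwreath} collapses to $p_i(\gamma^1_{SO}\wr C_p)=[(1+v^{p-1})^1]_{2i}$, which equals $(-1)^{(p-1)/2}v^{p-1}$ when $i=(p-1)/2$ and zero otherwise; this immediately yields $\Index_{C_p}(F_1^{SO})=(v^{p-1})$.

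For $n>1$, the case of even $n$ is Theorem \ref{indeven} specialized to $a=0$, so assume $n$ is odd. The complexification map $F_n^{SO}\to F_n^U$ together with Theorem \ref{relprime} give $(uv^{p-1},v^p)=\Index_{C_p}(F_n^U)\subset\Index_{C_p}(F_n^{SO})$; by the B\"{o}ckstein remark preceding Proposition \ref{indred}, the theorem will follow once I show $v^{p-1}\notin\Index_{C_p}(F_n^{SO})$. Applying Proposition \ref{pontwreath} to $\gamma^n_{SO}$, the $P(p_r)$-terms with $r\ge 1$ are in degree $\ge 4p$, and $[(1+v^{p-1})^n]_{2i}$ can contribute in the relevant range only at $2i=p-1$; this gives $p_i(\gamma^n_{SO}\wr C_p)=z_{4i}(p)$ for $1\le i<(p-1)/2$ together with
\[
p_{(p-1)/2}(\gamma^n_{SO}\wr C_p)=(-1)^{(p-1)/2}n\,v^{p-1}+z_{2(p-1)}(p).
\]

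To test whether $v^{p-1}\in\Ker(p_1^\ast)$, I would use the degenerate Serre spectral sequence of the fibration \eqref{F8} (Proposition \ref{P3}) to decompose $H^{2(p-1)}(BW_n^{SO})$ as $H^{2(p-1)}(BSO(n)^p)^{C_p}\oplus\Z/p\cdot v^{p-1}$ into its $(0,2(p-1))$ and $(2(p-1),0)$ bidegree pieces. The relations $u\cdot I(y)=v\cdot I(y)=0$ from Theorem \ref{cohwrpow}, together with the degree bound $|P(p_r)|\ge 4p>2(p-1)$, restrict the admissible multipliers of each $z_{4i}(p)$ in degree $2(p-1)$ to elements of $H^{2(p-1)-4i}(BSO(n)^p)^{C_p}$. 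Reading off bidegrees then shows that writing $v^{p-1}$ as an element of the Pontrjagin ideal is equivalent to a relation $z_{2(p-1)}(p)\in(z_4(p),\dots,z_{2(p-2)}(p))$ inside the ring $H^{2(p-1)}(BSO(n)^p)^{C_p}$.

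The only serious obstacle is to rule out this last relation by algebraic independence. I would reduce modulo $p_i^{(j)}$ for $i\ge 2$ via the natural $C_p$-equivariant surjection $H^\ast(BSO(n)^p)\twoheadrightarrow\Z/p[p_1^{(1)},\dots,p_1^{(p)}]$, under which $z_{4k}(p)$ is sent to the elementary symmetric polynomial $e_k(p_1^{(1)},\dots,p_1^{(p)})$. Averaging any putative relation over a transversal for $C_p$ in $S_p$ then produces a relation inside $\Z/p[e_1,\dots,e_p]$, which contradicts the algebraic independence of $e_1,\dots,e_p$. This gives $v^{p-1}\notin\Index_{C_p}(F_n^{SO})$ and completes the case $n>1$ odd.
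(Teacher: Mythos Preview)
Your proof is correct and follows essentially the same approach as the paper: both compute the low-degree Pontrjagin classes of $\gamma^n_{SO}\wr C_p$ via Proposition \ref{pontwreath} and reduce the key non-membership $v^{p-1}\notin\Ker(p_1^\ast)$ to the algebraic independence of elementary symmetric polynomials after specializing $p_i\mapsto 0$ for $i\ge 2$. Your organization differs in minor ways --- you invoke Theorem \ref{indeven} for even $n$ and the complexification map $F_n^{SO}\to F_n^U$ for one containment, and you spell out the bidegree filtration argument and the $S_p/C_p$-averaging more explicitly than the paper does --- but the underlying mechanism is the same. (The averaging step is in fact not needed: since $\Z/p[p_1^{(1)},\ldots,p_1^{(p)}]$ is free over $\Z/p[e_1,\ldots,e_p]$, the relation $e_{(p-1)/2}\in(e_1,\ldots,e_{(p-3)/2})$ in the big ring already forces the same relation in the symmetric subring.)
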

\begin{proof}
In the case for $n=1$, $F_n^{SO} \simeq SO(p)$, and $W_1^{SO}\simeq C_p$. The total Pontrjagin class of the wreath power equals the total Pontrjagin class of the regular representation from which we obtain 
%
$\Index_{C_p}(F_1^{SO})$ is  $(v^{p-1})$. 

Now we turn to the case when $n> 1$. We compute according to \eqref{ptin} the Pontrjagin classes using Proposition \ref{pontwreath},
\begin{align*}
&p_1(\gamma_{SO}^n\wr C_p) =z_4(p(\gamma_{SO}^n)),\\
&p_2(\gamma_{SO}^n\wr C_p) =z_8(p(\gamma_{SO}^n)),\\
&\cdots,\\
&p_{\frac{p-1}{2}}(\gamma_{SO}^n\wr C_p) = v^{p-1}+z_{2(p-1)}(p(\gamma_{SO}^n)),\\
&\cdots.
\end{align*}
Let $p= 1+p_1+\cdots + p_{\frac{n-1}{2}}$. We observe now that if $n>1$, $z_{2(p-1)}(p)$ does not belong to the ideal generated by $z_4(p), \cdots , z_{2(p-1)-4}(p)$. Suppose on the contrary that there is a relation 
\[ 
z_{2(p-1)}(p) = \sum_{i=1}^{\frac{p-3}{2}} \lambda_i z_{4i}(p), 
\]
We may put $p_i=0$ for $i\geq 2$ in the expression above and obtain a relation among the homogeneous terms in $z_{4i}(1+p_1)$. These are elementary symmetric polynomials as observed in Notation \ref{z}, and so we obtain a contradiction. 
%
%
\end{proof}
To compute the index for general $n$ odd we need results analogous to Proposition \ref{5p} for the Pontrjagin classes. The proofs are similar so we skip them and state the result below. 
\begin{prop}\label{5pp}
Suppose that $p\mid n$, and write $n=p^aq$ for $a\geq 1$ and $p \nmid q$. For every $1\leq 2k\leq p^{a+1}-1$, the relation $p_k(\gamma^n_{SO}\wr C_p)=0$ in the quotient algebra 
\[H^*(BW_n^{SO})/\Big( p_1(\gamma^n_{SO}\wr C_p),\cdots,p_{k-1}(\gamma^n_{SO}\wr C_p) \Big)\] 
is equivalent to the following relations:
\vspace{.5cm}
\begin{compactenum}[\rm (i)]
\item\label{5pp1} If $p\nmid k$ and $1\leq 2k\leq p^{a+1}-2$, then 
\begin{myeq}\label{pind1}
z_{4k}(p)=0.
\end{myeq}
\item\label{5pp2} If $p\mid k$ and $2k\not\in \{p^{a+1}-p^a,\cdots, p^{a+1}-p\}$, then 
\begin{myeq}\label{pind2}
P(p_{\frac{k}{p}})+z_{4k}(p)=0.
\end{myeq}
\item\label{5pp3} If $2k=p^{a+1}-p^{m+1}$, where $m\in \{0,1\cdots,a-1\}$, then
\begin{myeq}\label{pind3}
P(p_{\frac{p^a-p^m}{2}})+\alpha_m v^{(p^{a+1}-p^{m+1})}+z_{4k}(p)=0, \quad
\text{where $\alpha_m\in \Z/p^{\times}$.}
\end{myeq}
\item\label{5pp4} If $2k=p^{a+1}-1$, then
\begin{myeq}\label{pind4}
v^{(p-1)}P(p_{\frac{p^a-1}{2}})+z_{2(p^{a+1}-1)}(p)=0. 
\end{myeq}
\end{compactenum}
\end{prop}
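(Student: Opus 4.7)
The plan is to adapt the inductive argument of Proposition~\ref{5p} almost verbatim, with the Chern-class formula of Theorem~\ref{TCh} replaced by the Pontrjagin-class formula of Proposition~\ref{pontwreath}:
\[
p_k(\gamma^n_{SO}\wr C_p)=\Big[\sum_{0\le r\le\lfloor n/2\rfloor}(-1)^{r-k}P(p_r)(1+v^{p-1})^{n-2r}\Big]_{2k}+z_{4k}(p).
\]
I will induct on $k$ for $1\le 2k\le p^{a+1}-1$. The structural observation driving the parallel is that $p_k(\gamma^n_{SO}\wr C_p)$ lives in cohomological degree $4k$, whereas $c_k$ lived in degree $2k$; consequently the parameter ``$k$'' of Proposition~\ref{5p} corresponds here to ``$2k$'', and the exponent ``$n-r$'' becomes ``$n-2r$''. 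This is exactly the uniform substitution $k\leftrightarrow 2k$ visible in the four parts of the statement.

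At each stage I isolate those $r$ for which $[(1+v^{p-1})^{n-2r}P(p_r)]_{2k}$ can contribute non-trivially in cohomological degree $4k$. For $r<k$, the inductive hypothesis allows me to replace $P(p_r)$ either by $-z_{4r}(p)$ (cases (i)--(ii)) or by $-\alpha_s v^{p^{a+1}-p^{s+1}}-z_{4r}(p)$ when $2r=p^{a+1}-p^{s+1}$ (case (iii)). Since $z_{4r}(p)$ lies in the image of $I$ and is annihilated by $u$ and $v$ via \eqref{Einfty}, only the pure $v$-power parts of these substitutions survive after multiplication by $(1+v^{p-1})^{n-2r}$. Thus the only potentially non-trivial contributions come from the $r$ for which $(p-1)\mid 2(k-r)$ and the binomial coefficient $\binom{n-2r}{2(k-r)/(p-1)}$ is nonzero modulo $p$.

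The arithmetic is then done by Lucas's theorem, as in Proposition~\ref{5p}. The $r=0$ term produces a nonzero binomial coefficient only at $2k=p^{a+1}-p^a$, supplying the coefficient $\alpha_{a-1}\equiv\pm q\pmod p$ of case (iii) with $m=a-1$. For $r\ge 1$, writing $n-2r=p^aq-p^a+p^s$ forces the $p$-adic valuation of the binomial index $(p^{s+1}-p^{m+1})/(p-1)$ to be at least $s$, which isolates $s=m+1$ in case (iii) and $s=0$ in case (iv); these produce the terms $\alpha_m v^{p^{a+1}-p^{m+1}}$ and $v^{p-1}P(p_{(p^a-1)/2})$ respectively. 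The transition $k\mapsto 2k$ is compatible with these divisibility arguments because $p$ is odd, so $p^s\mid 2r$ iff $p^s\mid r$.

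The main obstacle, exactly as in the complex case, is the combinatorial bookkeeping in case (iii): verifying that among the several values of $s>m$ for which $P(p_{(p^a-p^s)/2})$ can be substituted by the inductive hypothesis, only $s=m+1$ produces a non-zero surviving contribution in degree $4k$. No fundamentally new steps beyond the proof of Proposition~\ref{5p} are required, since the only features of Chern classes used there were the structural formula of Theorem~\ref{TCh} and the annihilation properties of the operators $P$ and $I$ from \eqref{Einfty}, both of which have exact Pontrjagin analogues.
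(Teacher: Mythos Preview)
Your proposal is correct and is precisely the paper's approach: the paper itself does not write out a proof, stating only that ``the proofs are similar so we skip them,'' and your outline—adapting the induction of Proposition~\ref{5p} by the substitution $k\leftrightarrow 2k$, $n-r\leftrightarrow n-2r$, using Proposition~\ref{pontwreath} in place of Theorem~\ref{TCh}, and invoking the same Lucas-theorem binomial analysis—is exactly what is intended. Your identification of the main bookkeeping obstacle in case~(iii) and your awareness of the extra signs $(-1)^{r-k}$ (which are absorbed into the units $\alpha_m$ or cancel when $p\mid k$ since $(p+1)$ is even) are both on point.
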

Using Proposition \ref{5pp} we complete the computation $\Index_{C_p}(F_n^{SO})$ for general odd $n$.
\begin{theorem}\label{indexfso}
Let $n$ be odd and $n=p^a q$ with $a\geq 1$, $p\nmid q$. Then,
\[\Index_{C_p}(F^{SO}_n)=\begin{cases} (v^{p^{a+1}-1}) & \text{if $q=1$},\\
                                                (uv^{p^{a+1}-1},v^{p^{a+1}}) & \text{if $q>1$}.\end{cases}\]
\end{theorem}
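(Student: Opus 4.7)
The plan parallels the unitary case (Theorem \ref{Findex}), using Proposition \ref{5pp} in place of Proposition \ref{5p}; the new phenomenon is a rank constraint that distinguishes $q=1$ from $q>1$.

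First, I would derive the SO analogue of Corollary \ref{5c1}. Multiplying relation \eqref{pind3} with $m=0$ by $v^{p-1}$, using $v\cdot z=0$ from \eqref{Einfty}, and subtracting from relation \eqref{pind4} yields
\[\alpha_0\, v^{p^{a+1}-1} \;=\; z_{2(p^{a+1}-1)}(p) \quad\text{in } H^*(BW_n^{SO})/\Ker(p_1^*),\]
with $\alpha_0 \in (\Z/p)^{\times}$. Multiplying by $u$ and by $v$ (both of which annihilate $z$-classes) gives $uv^{p^{a+1}-1}=0$ and $v^{p^{a+1}}=0$ in the quotient, so $(uv^{p^{a+1}-1},v^{p^{a+1}}) \subseteq \Index_{C_p}(F_n^{SO})$ in both cases. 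The dichotomy is then decided by $z_{2(p^{a+1}-1)}(p)$: by Notation \ref{z}, its degree $2(p^{a+1}-1)$ component sums $I$-classes over tuples $(q_1,\dots,q_p)$ with $0\le q_i \le (n-1)/2$ and $\sum_i q_i = (p^{a+1}-1)/2$. For $q=1$, the maximum of $\sum_i q_i$ is $p\cdot (p^a-1)/2 = (p^{a+1}-p)/2 < (p^{a+1}-1)/2$, so no such tuple exists and $z_{2(p^{a+1}-1)}(p)=0$ already in $H^*(BSO(p^a)^p)$; for $q>1$, the tuple $((p^{a+1}-1)/2, 0, \cdots, 0)$ fits inside the range and produces a non-trivial class.

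For $q=1$, the preceding yields $v^{p^{a+1}-1}\in\Index_{C_p}(F_{p^a}^{SO})$, so the index contains $(v^{p^{a+1}-1})$. To obtain equality I need $v^{p^{a+1}-2}$ and $uv^{p^{a+1}-2}$ to remain non-zero in the quotient, non-vanishing of smaller powers then following by multiplicativity. Mimicking the bigger-ideal step in the proof of Theorem \ref{Findex}, I would enlarge $\Ker(p_1^*)$ to an ideal $J$ by adjoining all image-of-$I$ classes in degrees below $2(p^{a+1}-1)$, and verify via Proposition \ref{5pp} together with the $E_\infty$-page \eqref{Einfty} that neither class lies in $J$: the only Pontrjagin relations producing pure $v$-powers have exponent $p^{a+1}-p^{m+1}$, none of which matches $v^{p^{a+1}-2}$; each attempted expression of $uv^{p^{a+1}-2}$ by multiplying relation \eqref{pind3} by $u\cdot v^{p^{m+1}-2}$ reduces tautologically to itself after substituting back $P(p_{(p^a-p^m)/2})=-\alpha_m v^{p^{a+1}-p^{m+1}}-z(\cdots)$, leaving a non-cancelling $P$-component.

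For $q>1$, the $C_p$-equivariant map
\[F_{p^a}^{SO} \longrightarrow F_{p^a q}^{SO},\quad (V_1,\dots,V_p)\mapsto (V_1^{\oplus q},\dots,V_p^{\oplus q}),\]
(using $\R^{p^{a+1}q} \cong (\R^{p^{a+1}})^q$), together with the $q=1$ result, gives $\Index_{C_p}(F_{p^a q}^{SO}) \subseteq (v^{p^{a+1}-1})$. Combined with the first step, the index is either $(v^{p^{a+1}-1})$ or $(uv^{p^{a+1}-1},v^{p^{a+1}})$; to rule out the former I show $z_{2(p^{a+1}-1)}(p)\neq 0$ in the quotient, via the same bigger-ideal trick where the representative $I(p_{(p^{a+1}-1)/2}\otimes 1 \otimes \cdots \otimes 1)$ identified above survives in $H^*(BW_n^{SO})/J$. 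The main technical obstacle is the minimality argument in the $q=1$ case: certifying non-vanishing of several families of monomials in the complicated quotient requires careful bookkeeping on the $E_\infty$-page, whereas the $q>1$ case reduces essentially to the non-vanishing trick already developed for Theorem \ref{Findex}.
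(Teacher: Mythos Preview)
Your overall strategy matches the paper's, and the derivation of $\alpha_0 v^{p^{a+1}-1}=z_{2(p^{a+1}-1)}(p)$ together with the degree count for $q=1$ is exactly what the paper does. There is, however, one concrete error in the $q>1$ step.

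Your witness tuple $\bigl((p^{a+1}-1)/2,0,\dots,0\bigr)$ for the non-vanishing of $z_{2(p^{a+1}-1)}(p)$ only exists when $(p^{a+1}-1)/2\le (n-1)/2$, i.e.\ when $q\ge p$. Since $n$ is odd, $q>1$ forces only $q\ge 3$, so for instance $p=5$, $q=3$ breaks your argument. The paper instead uses the term
\[
I\bigl(p_{(p^a+1)/2}\otimes\cdots\otimes p_{(p^a+1)/2}\otimes p_{(p^a-p)/2}\bigr),
\]
for which the constraint is $(p^a+1)/2\le (p^aq-1)/2$, equivalently $p^a(q-1)\ge 2$, and this holds for every odd $q\ge 3$ and every $a\ge 1$. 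With this correction your bigger-ideal argument goes through just as in Theorem~\ref{Findex}.

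For the $q=1$ minimality, the paper avoids the $E_\infty$-bookkeeping you outline. Instead of invoking the bigger ideal, it multiplies \eqref{pind3} (with $m=0$) by $v^{p-2}$ rather than $v^{p-1}$, obtaining directly
\[
\alpha_0\,v^{p^{a+1}-2}\;=\;-\,v^{p-2}P\bigl(p_{(p^a-1)/2}\bigr)
\]
in the quotient, and then observes that the only relation from Proposition~\ref{5pp} touching $P(p_{(p^a-1)/2})$ in degrees $\le 2(p^{a+1}-2)$ is \eqref{pind3} with $m=0$ itself, so the right-hand side is nonzero. This is shorter than your approach, though yours would also work.
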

\begin{proof}
Let us first prove the theorem for $n=p^a$. This is analogous to Theorem \ref{Findex}. Taking $m=0$ in \eqref{pind3} and multiplying it with $v^{p-1}$ and subtracting it from \eqref{pind4} we get 
\[v^{p^{a+1}-1}=\alpha_0^\prime z_{2(p^{a+1}-1)}(p)\]
 where $\alpha^\prime_0=\alpha_0^{-1}$. However, observe 
\[
z_{2(p^{a+1}-1)}(p)=\sum_{\sum q_i=\frac{p^{a+1}-1}{2}} p_{q_1}\otimes \cdots \otimes p_{q_p},\]
is $0$ for degree reasons, as the maximum value each $q_i$ can attain is  $\frac{p^a-1}{2}$. Thus, $v^{p^{a+1}-1}=0$ in $H^*(BW_n^{SO})/\Big( p_1(\gamma^n_{SO}\wr C_p),\cdots,p_{\frac{pn-1}{2}}(\gamma^n_{SO}\wr C_p)\Big)$. Now if we can show  $v^{p^{a+1}-2}\not\in \Big( p_1(\gamma^n_{SO}\wr C_p),\cdots,p_{\frac{pn-1}{2}}(\gamma^n_{SO}\wr C_p)\Big)$ we are done for the case $p^a$. From  \eqref{pind3} we have 
\[v^{p^{a+1}-2}=\alpha_0^\prime v^{p-2}P(p_{\frac{p^a-1}{2}})\]  
This is clearly non-zero, as the only relation on $P(p_{\frac{p^a-1}{2}})$ in degrees $\leq 2(p^{a+1}-2)$ is given by \eqref{pind3} for $m=0$. 

For the general case, where $q>1$ we already have  
\[v^{p^{a+1}}=0,~~~uv^{p^{a+1}-1}=0,\] 
from the proof of Theorem \ref{indeven}. The proof is complete once we observe that 
\[v^{p^{a+1}-1}=\alpha_0^\prime z_{2(p^{a+1}-1)}(p)\]
and $z_{2(p^{a+1}-1)}(p)$ is non-zero modulo  the ideal $\Big( p_1(\gamma^n_{SO}\wr C_p),\cdots,p_{\frac{pn-1}{2}}(\gamma^n_{SO}\wr C_p)\Big)$.
We have
\[ z_{2(p^{a+1}-1)}(p)=\sum_{\sum q_i=\frac{p^{a+1}-1}{2}} p_{q_1}\otimes \cdots \otimes p_{q_p},\]
contains a term $I(p_{\frac{p^a+1}{2}} \otimes \cdots p_{\frac{p^a+1}{2}} \otimes p_{\frac{p^a-p}{2}})$ which may be used to show that the above expression is non-zero by arguments similar to the ones in the proof of Theorem \ref{relprimeso}. Hence the theorem follows for $q>1$.
\end{proof}
\end{mysubsection}


\begin{thebibliography}{10}

\bibitem{AM04}
{\sc A.~Adem and R.~J. Milgram}, {\em Cohomology of finite groups}, vol.~309 of
  Grundlehren der mathematischen Wissenschaften [Fundamental Principles of
  Mathematical Sciences], Springer-Verlag, Berlin, second~ed., 2004.

\bibitem{BBKV18}
{\sc D.~Barali\'{c}, P.~V.~M. Blagojevi\'{c}, R.~Karasev, and
  A.~Vu\v{c}i\'{c}}, {\em Index of {G}rassmann manifolds and orthogonal
  shadows}, Forum Math., 30 (2018), pp.~1539--1572.

\bibitem{BG21}
{\sc S.~Basu and S.~Ghosh}, {\em Bredon cohomology of finite dimensional
  {$C_p$}-spaces}, Homology Homotopy Appl., 23 (2021), pp.~33--57.

\bibitem{BaKu21}
{\sc S.~Basu and B.~Kundu}, {\em The index of certain stiefel manifolds}, 2021.

\bibitem{Bor53}
{\sc A.~Borel}, {\em Sur la cohomologie des espaces fibr\'{e}s principaux et
  des espaces homog\`enes de groupes de {L}ie compacts}, Ann. of Math. (2), 57
  (1953), pp.~115--207.

\bibitem{Fahu88}
{\sc E.~Fadell and S.~Husseini}, {\em An ideal-valued cohomological index
  theory with applications to {B}orsuk-{U}lam and {B}ourgin-{Y}ang theorems},
  Ergodic Theory Dynam. Systems, 8$^*$ (1988), pp.~73--85.

\bibitem{Le97}
{\sc I.~J. Leary}, {\em On the integral cohomology of wreath products}, J.
  Algebra, 198 (1997), pp.~184--239.

\bibitem{Lu78}
{\sc E.~Lucas}, {\em Theorie des {F}onctions {N}umeriques {S}implement
  {P}eriodiques}, Amer. J. Math., 1 (1878), pp.~289--321.

\bibitem{Mat03}
{\sc J.~Matou\v{s}ek}, {\em Using the {B}orsuk-{U}lam theorem}, Universitext,
  Springer-Verlag, Berlin, 2003.
\newblock Lectures on topological methods in combinatorics and geometry,
  Written in cooperation with Anders Bj\"{o}rner and G\"{u}nter M. Ziegler.

\bibitem{May05}
{\sc J.~P. May}, {\em A note on the splitting principle}, Topology Appl., 153
  (2005), pp.~605--609.

\bibitem{MS74}
{\sc J.~W. Milnor and J.~D. Stasheff}, {\em Characteristic classes}, Annals of
  Mathematics Studies, No. 76, Princeton University Press, Princeton, N. J.;
  University of Tokyo Press, Tokyo, 1974.

\bibitem{Oz87}
{\sc M.~Ozaydin}, {\em Equivariant maps for the symmetric group}, available at
  https://minds.wisconsin.edu/bitstream/handle/1793/63829/Ozaydin.pdf,  (1987).

\bibitem{Vol00}
{\sc A.~Y. Volovikov}, {\em On the index of {$G$}-spaces}, Mat. Sb., 191
  (2000), pp.~3--22.

\end{thebibliography}
\end{document}